\def \N{\mathbb{N}}
\def \R{\mathbb{R}}
\def \E{\mathbb{E}}
\def \P{\mathbb{P}}
\theoremstyle{plain} 
\newtheorem{thm}{Theorem}[section] 
\newtheorem{cor}[thm]{Corollary} 
\newtheorem{lem}[thm]{Lemma} 
\newtheorem{prop}[thm]{Proposition} 
\newtheorem{defn}[thm]{Definition}
\newtheorem{oss}[thm]{Remark}
\theoremstyle{definition} 
\newtheorem{rem}[thm]{Remark}
\numberwithin{equation}{section}
\newcommand{\be}{\begin{equation}}
\newcommand{\ee}{\end{equation}}
\def\rife#1{(\ref{#1})}
\def\m{\noalign{\medskip}}
\def\vfi{\varphi}
\def\dive{{\rm div}}
\def\de{\delta}
\def\ga{\gamma}
\def\vep{\varepsilon}
\def\elle#1{L^{#1}(\Omega)}
\def\parelle#1{L^{#1}(Q_T)}
\newcommand{\miezz}{\frac{1}{2}}
\newcommand{\eps}{\varepsilon}
\newcommand{\into}{\ensuremath{\int_{\Omega}}}
\newcommand{\intoe}{\ensuremath{\int_{\Omega_\eps}}}
\newcommand{\intie}{\ensuremath{\int_{0}^{t}\int_{\Omega_\eps}}}
\newcommand{\intif}{\ensuremath{\int_{0}^{T}\int_{\Omega}}}
\newcommand{\norm}[1]{\ensuremath{\left\Arrowvert #1 \right\Arrowvert}}
\newcommand{\spazio}{\hspace{0.08cm}}
\newcommand{\intok}[1]{\int_{D_{#1}}}
\newcommand{\intifk}[1]{\int_{0}^{T}\int_{D_{#1}}}
\newcommand{\intfk}[1]{\int_{t}^{T}\int_{D_{#1}}}
\newcommand{\etd}{e^{\theta(d)}}
\newcommand{\conc}{\left(\delta+\frac{\psi(\bar{x},\bar{y},\bar{z})}{\delta}\right)}
\newcommand{\con}{\left(\delta+\frac{\psi({x},{y},{z})}{\delta}\right)}
\newcommand{\bx}{\bar{x}}
\newcommand{\by}{\bar{y}}
\newcommand{\bz}{\bar{z}}
\newcommand{\bt}{\bar{t}}
\begin{document}

\title{Mean field games under invariance conditions for the state space}

\author{Alessio Porretta}\thanks{Dipartimento di Matematica, Universit\`a di Roma Tor Vergata. 
Via della Ricerca Scientifica 1, 00133 Roma, Italy. \texttt{porretta@mat.uniroma2.it}} 

\author{Michele Ricciardi}\thanks{Dipartimento di Matematica, Universit\`a di Roma Tor Vergata. 
Via della Ricerca Scientifica 1, 00133 Roma, Italy. \texttt{ricciard@mat.uniroma2.it}}

\date{\today}

\maketitle

\begin{abstract}
We investigate mean field game systems under invariance conditions for the state space, otherwise called {\it viability conditions} for the controlled dynamics. First we analyze separately the Hamilton-Jacobi and the Fokker-Planck equations, showing how the invariance condition on the underlying dynamics yields the existence and uniqueness, respectively in $L^\infty$ and in $L^1$. Then we apply this analysis to mean field games.
We investigate further the regularity of solutions proving, under some extra conditions, that the value function is (globally) Lipschitz and semiconcave. This latter regularity  eventually leads the distribution density to be bounded, under suitable conditions. The results are not restricted to smooth domains.
\end{abstract}

\section{Introduction}

The theory of mean field games  was introduced by J.M. Lasry and P.L. Lions (\cite{LL1}, \cite{LL2}, \cite{LL-japan}) in order to describe Nash equilibria in differential games with 
infinitely many (small and undistinguishable) agents, using tools from mean-field theories. A similar notion of Nash equilibria was also developed in the same years by P. Caines, M. Huang and R. Malham\'e \cite{HCM}. The macroscopic description used in mean field game theory leads to study  coupled systems of PDEs, where the Hamilton-Jacobi-Bellman equation satisfied by  the single agent's value function is coupled with the Kolmogorov Fokker-Planck equation satisfied by the distribution law of the population. The simplest form of this system is the following
\begin{equation}\label{mfg}
\begin{cases}
-\partial_t u - \sum\limits_{i,j} a_{ij}(x) \partial_{ij}^2 u +H(t,x,Du)=F(t,x,m)\,, \hspace{2cm}(t,x)\in (0,T)\times\Omega\\
\partial_t m - \sum\limits_{i,j} \partial_{ij}^2 (a_{ij}(x)m) -\mathrm{div}(mH_p(t,x,Du))=0\,,\hspace{2cm}(t,x)\in (0,T)\times\Omega\\
m(0)=m_0 \hspace{2cm} u(T)=G(x,m(T))
\end{cases}\end{equation}
where $\partial_{ij}^2 (\cdot)= \frac{\partial^2 (\cdot) }{\partial x_i\partial x_j}$ denotes a  second order partial differentiation and $\dive(\cdot) $ is the usual divergence operator, and where $H_p(t,x,p)$ denotes $\frac{\partial H(t,x,p)}{\partial p}$, for $p\in \R^N$.

Here $\Omega\subseteq\mathbb{R}^N$ is an open and bounded set and $x\in \Omega$ represents  the dynamical state of the generic agent, $m(t)$ is the distribution law of the agents at time $t$ (and $m(t,x)$ denotes its density, if $m(t)\in L^1$), $F(t,x,m)$, $G(x,m(T))$ are respectively a  running cost and a final pay-off and $H(t,x,Du)$ is the Hamiltonian function associated to the cost of dynamic control of the individuals. More details on the  interpretation of solutions in terms of stochastic control  will be given later.  

There is by now an extensive literature concerning mean field game systems of this kind, and many fundamental issues have been discussed so far such as existence or nonexistence, regularity and uniqueness of solutions,  long time behavior etc...However, most of the literature considers  the case that the state variable $x$ belongs to the flat torus (i.e. solutions are periodic). 
But in many applied models, boundary conditions turn out to be  a  crucial issue. A significant case occurs when the dynamical state needs to remain inside some given domain of existence, say if some natural restriction needs to be preserved. For instance,  in many models appearing in economics,  a scalar state variable needs to remain above or below given thresholds (e.g.  if $x$ denotes a stock quantity, or the reserve of  a fossil fuel, or  a wealth level, see  models described in \cite{Buera+al}, \cite{GLL}). 

There are two typical ways in which the proposed models   handle this kind of situation: either one considers the {\it state constraint} control problem, in which case one uses the control in order to satisfy the required restriction, or alternatively the drift-diffusion terms are built in the model so that the state does not leave the domain, regardless of the control.
This latter situation is what we are going to study in this paper. Namely, we assume that the state variable $x$ belongs to a  bounded domain $\Omega \subset \R^d$ and we will assume structure conditions, on the diffusion and the Hamiltonian terms, which imply that the domain $\Omega$ is an invariant set for the underlying controlled dynamics, and this invariance occurs {\it for any choice of the control}. In the control community, sometimes this property is referred to as the {\it viability of the state space}.  

Let us stress that considering  the domain to be  invariant for all controls is different from considering the state constraint control problem; in very rough words, one can say that the viability of the state space plays like a regularizing condition of the underlying dynamics, whereas the state constraint problem leads to formation of singularities at the boundary, because of the forced action of the control.  

In the case of uncontrolled SDEs
\be\label{dyn}
dX_t= b(t,X_t) \, dt + \sqrt 2\, \sigma(X_t)\, dW_t \,,\quad X_0=x \in \Omega 
\ee
the conditions on the coefficients  $b$ and $\sigma$ which let $\Omega$ be an invariant set are extensively discussed in the literature, at least in the case that $\sigma$ and $b$ are globally Lipschitz. We refer the reader to \cite{CDF} and the literature therein. The case that the diffusion is controlled, so $b= b(t,x,\alpha)$, $\sigma= \sigma(t,x,\alpha)$ for $\alpha$ in a set of controls, was recently discussed in \cite{BCR}, \cite{CCR} in terms of (viscosity solutions) of the associated Bellman operators. In our study, we let aside by now the possibility that the diffusion part is controlled;  on the other hand, we aim at giving general conditions on the diffusion matrix and drift terms which may apply to both degenerate and non degenerate operators, to possibly unbounded drifts and possibly non Lipschitz matrix $\sigma$. For the  dynamics
\rife{dyn} in a $C^2$ domain $\Omega$, we formulate this invariance condition by requiring that the following inequality holds in a neighborhood of the boundary:
\be\label{inv-lin}
{\rm tr}(a(x) D^2 d(x)) + b(t,x)\cdot Dd(x) \geq \frac{a(x)Dd(x)\cdot Dd(x)}{d(x)} - C \, d(x)
\ee
for some constant $C>0$, where $d(x)$ is the distance function to the boundary, $a(x)=(\sigma\,\sigma^*)(x)$ is the diffusion matrix and ${\rm tr}(\cdot)$ is the trace operator. This condition reduces to the well known necessary and sufficient condition (\cite{CDF}) if $\sigma$ is Lipschitz continuous and $\sigma^*(x) Dd(x)=0$ on the boundary.  However, it also includes more general cases,  like $\sigma$ being only $1/2$-H\"older continuous (to this respect, it generalizes the condition used in  \cite{BCR} for H\"older coefficients) and, last but not least, it also applies to the case of non degeneracy ($a(x)$ coercive up to the boundary) if the drift $b(x)$ is allowed to be unbounded (as in \cite{LP}).

In case of Bellman operators, say if the dynamics is controlled,  the viability of the state space is ensured provided the same condition as \rife{inv-lin} is required to hold for all controls.  More specific examples will be given in the next Section. 
We point out that this kind of condition is also related to the notion of characteristic points of the boundary, namely to the question whether boundary conditions should be prescribed or not for the corresponding Bellman operator, see e.g. \cite{BB}, \cite{BR} and \cite{Fi} for the linear case.  
\vskip0.5em
For a general PDE approach to the Hamilton-Jacobi equation  
\be\label{HJ}
-\partial_t u - \sum\limits_{i,j} a_{ij}(x) \partial_{ij}^2 u +H(t,x,Du)=0\,,\qquad (t,x)\in (0,T)\times \Omega
\ee
we replace the invariance condition on the dynamics with a structure condition formulated directly on the Hamiltonian function. Namely, in the same spirit as above, we require that the diffusion matrix $a(x)$ and the Hamiltonian $H(t,x,p)$ satisfy the inequality
\be\label{inv-hjb}
{\rm Tr}(a(x) D^2 d(x)) - H_p(t,x,p)\cdot Dd(x) \geq \frac{a(x)Dd(x)\cdot Dd(x)}{d(x)} - C \, d(x)\, \qquad \forall p\in \R^N
\ee
for some constant $C>0$, for $x$ in a neighborhood of the boundary and $t\in (0,T)$. 

As it is intrinsic to mean field games, we are going to study not only the properties of the HJB equation under condition \rife{inv-hjb} but also the properties of the Kolmogorov equation which appear, roughly speaking, in a  dual form. The key point, as we will see in our results, is that the {\it invariance condition} ensures that, on one hand, the uniqueness holds just in the class of bounded solutions for HJB, on another hand a global $L^1$- stability holds for the KFP equation.

In the end, the contribution of this article will be the analysis of HJB equations, Fokker-Planck equations and, eventually, mean field games under the invariance structure conditions formulated above. In order to focus on the boundary behavior, we assume throughout  the article that the matrix $a(x)$ is Lipschitz continuous in $\Omega$ and is elliptic in the interior of $\Omega$, namely
\be\label{ell0}
a(x) \ge 0\quad \forall\ x\in\overline{\Omega},  \quad \hbox{and\ \  $a(x) >0$  if $x\in\Omega$.}
\ee
This latter condition avoids the superposition of interior and boundary degeneracy which would make the analysis more complicated. Besides, the interior ellipticity will guarantee local compactness of the solutions which allows us to use a standard framework of weak (distributional) solutions. While we defer a precise statement of our results to the next sections, we list here a short summary of what we prove in this paper:
\begin{itemize}

\item[(a)] Assuming the structure conditions \rife{inv-hjb} and \rife{ell0}, and requiring the Hamiltonian $H(t,x,p)$ to be convex in $p$, locally bounded in $x$ with at most quadratic growth with respect to $p$ and such that $H(t,x,0)$ is globally bounded, we show existence and uniqueness of bounded solutions to the HJB equation \rife{HJ} with bounded terminal pay-off.

\item[(b)] Assuming that the drift $b(t,x)$ is locally bounded and the structure conditions \rife{inv-lin} and \rife{ell0}, we prove that, for  any initial probability density $m_0\in \elle1$  the Fokker-Planck equation
$$
\begin{cases}
\partial_t m - \sum\limits_{i,j} \partial_{ij}^2 (a_{ij}(x)m) +\mathrm{div}(m\, b(t,x))=0\,, \qquad(t,x)\in (0,T)\times\Omega\\
m(0)=m_0 
\end{cases}
$$
admits  a unique weak solution $m\in C^0([0,T];L^1(\Omega))$. Here by a  weak solution we mean that  
$$
\intif m\, {\mathcal L}(\phi) dxdt =\into m_0\phi(0)dx
\qquad 
$$
for every $\phi\in C^0([0,T];L^1(\Omega))\cap L^\infty$ such that ${\mathcal L}(\phi)\in L^\infty$, where ${\mathcal L}(\phi)= -\partial_t \phi - \sum\limits_{i,j} a_{ij}(x) \partial_{ij}^2 \phi - b(t,x)\cdot D\phi$.

\item[(c)] Under the conditions on $a(x)$, $H$ assumed in item (a) (in particular, under the invariance condition \rife{inv-hjb}), and assuming that the coupling terms $F(t,x,m)$ and $G(x,m)$ satisfy global  bounds in $L^\infty$ and suitable continuity conditions with respect to $m$, we prove that the mean field game system \rife{mfg} admits a weak solution, where the two equations are formulated in the sense specified, respectively, by previous results in (a) and (b). This kind of solution of \rife{mfg} is also unique under usual monotonicity conditions upon $F$ and $G$ (with respect to $m$). 

\item[(d)] Assuming in addition that $a(x)= (\sigma\sigma^*)(x)$ with $\sigma$ Lipschitz,  plus a few natural structure conditions on the Hamiltonian $H$ and further regularity of $F$ and $G$, we prove additional regularity for the solution $(u,m)$ of \rife{mfg}; namely, that $u$ is (globally) Lipschitz continuous and semi concave in $\Omega$. Moreover, in this case $m$ is (globally) bounded provided  $\sum_{i,j}[ \frac{\partial a_{ij}}{\partial x_i}+ H_{p_j}(t,x,Du)]\,\nu_j \geq 0$ on the boundary, where $\nu$ is the outward unit normal.

\end{itemize}

The spirit of the above results is that the invariance condition plays like a  (transparent)  boundary condition for the two equations. In fact,  the existence
of solutions will be provided by limit of (penalized) standard Neumann problems. The stochastic interpretation of the invariance condition easily explains that  a kind of (transparent and soft) reflection naturally occurs near the boundary.  
The regularity results mentioned in item (d) show how the invariance condition may prevent the formation of singularities which, conversely, would occur in case of the state constraint problem. Indeed, global semi concavity may be lost in that case, see e.g. the recent paper \cite{CCC}.

Last but not least, we generalize our results to possibly non smooth domains. This generalization includes in particular the case that $\Omega= \prod_{i=1}^N (a_i,b_i)$ is a $N$-dimensional rectangle, which is often the case in applications.

\vskip1em

We conclude by summarizing the organization of the paper. In Section 2 we list the main notation and the standing assumptions which hold throughout the paper; then we give a  few examples of control problems which fit our conditions and we properly state the main existence and uniqueness results which are proved.
Section 3 is devoted to the study of the single HJB equation \rife{HJ} under the invariance condition. Section 4 is devoted to the analysis of the single Fokker-Planck equation in the same context. The mean field game system \rife{mfg} is studied and characterized in Section 5.  Section 6 contains the additional regularity results on the solutions and specifically the Lipschitz and semi concavity regularity; at this stage we need to make additional assumptions on the nonlinearity and this is why those results are not mentioned earlier in  Section 2.  Finally, Section 7 contains the generalization to non smooth domains.
We leave to the Appendix the proof of a couple of technical results.

\section{Preliminaries: assumptions and examples}
	
We assume throughout the paper that $\Omega$ is a bounded open subset of $\R^N$, $N\geq 1$.  We denote $Q_T:= (0,T)\times \Omega$. We recall that the \textit{oriented distance} from $\partial\Omega$,  denoted by $d_\Omega$, is the function defined by
$$ 
d_\Omega(x)=\left\{\begin{array}{rl}
d(x,\partial\Omega)\hspace{1cm}\mbox{if }x\in\Omega\\
-d(x,\partial\Omega)\hspace{1cm}\mbox{if }x\notin\Omega
\end{array}
\right.
$$
where, as usual, $d(x,\partial\Omega)=\inf\limits_{y\in\partial\Omega}|x-y|$.
We write $d$ instead of $d_\Omega$ when there is no possible mistake for $\Omega$.
It is well-known that $d_\Omega$ is a   $1$-Lipschitz function which coincides with the unique viscosity solution of the eikonal equation
\begin{align*}
\begin{cases}
|Du|=1\hspace{2cm}x\in\Omega\\
u=0\hspace{2.57cm}x\in\partial\Omega\,.
\end{cases}
\end{align*}
Moreover, if we require some regularity for the set $\Omega$, we obtain further regularity for $d_\Omega$.\\
\begin{defn}
Let $K\subseteq\mathbb{R}^N$. We say that $K$ is a compact domain of class $\mathcal{C}^{2}$ if $K$ is a compact connected set and $\exists M\in \mathbb N$ such that $\forall 1\le i\le M$ $\exists B_{r_i}(x_i)$, $x_i\in\partial K$ and a function $\phi_i: B_{r_i}(x_i)\to\mathbb{R}$ such that
\begin{itemize}
\item[(i)] $\partial K\subseteq {\bigcup_{i=1}^M} \, B_{r_i}(x_i)$
\item[(ii)] $\partial K\cap B_{r_i}(x_i)=\{\phi_i=0\}$
\item[(iii)] $\phi_i$ is of class $\mathcal{C}^{2}$ with $D^2 \phi_i$ bounded in $B_{r_i}(x_i)$.
\end{itemize}
\end{defn}
In the following, we assume that  $\Omega$ is an open set such that $\overline{\Omega}$ is a compact domain of class $\mathcal{C}^{2}$. We set
\begin{align*}
&R_\eps:=\{x\in \R^N\,:\, |d_\Omega(x)|<\eps\}\\
&\Gamma_\eps:=\{x\in\Omega\,:\,  d_\Omega(x)<\eps\}=R_\eps\cap\Omega\,.
\end{align*}
We recall (see e.g. \cite{CDF} and \cite{cingul}) that  
\begin{align*}
\overline{\Omega}\quad \hbox{is a  compact domain of class }\mathcal{C}^{2}\iff\exists\eps_0>0\, : \quad d_\Omega\in\mathcal{C}^{2}(R_{\eps_0})
\end{align*}
and
\begin{equation}\begin{split}\label{omegadelta}
	\forall x\in\Gamma_{\eps_0}\ &\exists!\ \overline{x}\in\partial\Omega\mbox{ s.t. }d_\Omega(x)=|x-\overline{x}|\\
	&\mbox{ and }D d_\Omega(x)=D d_\Omega(\overline{x})=-\nu(\overline{x})
	\end{split}
	\end{equation}
	where $\nu$ stands for the outward unit normal to $\partial\Omega$.
\\\\

\textbf{Remark:} Actually, we will use the function $d_\Omega$ only near $\partial\Omega$, where this  is a regular function. So, from now on, when we will write $d_\Omega$ (or $d$ when there is no possible mistake) we will mean a $C^2(\overline{\Omega})$ function $\tilde{d}$ such that $\exists\eps_0>0$ with $\tilde{d}=d$ in $\Gamma_{\eps_0}$.
\\\\
For every  $r\in\mathbb{R}$ we set
\be\label{qn}
D_r=\left\{x\in\Omega\mbox{ s.t. }d(x)\ge\frac{1}{r}\right\}=\Omega\setminus\Gamma_{\frac 1 r}
\ee
so that we build a sequence $\{D_n\}_{n\in\mathbb{N}}$ of compact domains of class $\mathcal{C}^{2}$ such that
$$
D_n\subseteq\overset{\circ}{D}_{n+1}\qquad\mbox{and }\quad  \bigcup_{n=1}^\infty D_n=\Omega\,.
$$
For each couple of vectors $(v,w)\in\mathbb{R}^n\times\mathbb{R}^m$, the tensor product $v\otimes w$ denotes the $n\times m$-matrix $v^Tw$. Finally,  throughout the proofs we use the notation  $C$ to denote a generic constant which may vary from line to line.

\vskip0.4em
\subsection{Standing assumptions.} \quad Let us now make precise the assumptions on the coefficients $a_{ij}$ and on the nonlinearities $H,F,G$ of the system \rife{mfg}.
\vskip0.4em
We assume that $a(x)= (a_{ij}(x))_{ij}	$ is a  $N\times N$-matrix which belongs to $W^{1,\infty}(\Omega)^{N \times N}$ and satisfies
\begin{equation}\label{deg}
 a(x)\xi\cdot \xi  > 0\quad \forall\ x\in \Omega, \forall \xi\in  \mathbb{R}^N\,.
\end{equation}
We call $\lambda_r\in\mathbb{R}_{>0}$ the constant of uniform ellipticity of $a$ in $D_r$, i.e.
\begin{align}\label{ell}
a(x)\xi\cdot \xi \ge\lambda_r|\xi|^2\hspace{1cm}\forall x\in D_r\,,\quad \forall\xi\in\mathbb{R}^N\,.
\end{align}
Obviously we have $\lambda_r\le\lambda_s$ if $r\ge s$. Moreover, by continuity the matrix $a(x)$ will be nonnegative on $\overline \Omega$, but it is allowed to vanish at the boundary, in which case $\lambda_r\searrow0$.
\vskip0.5em
We assume that $H(t,x,p)$ is a  function such that  $(t,x)\mapsto H(t,x,p)$ is measurable for any given $p\in \R^N$ and $p\mapsto H(t,x,p)$ is of class $C^1$ for almost every $(t,x)\in Q_T$. We assume in addition that
\be\label{convex}
p\mapsto H(t,x,p) \quad \hbox{is convex.}
\ee
\indent Concerning the growth of the Hamiltonian, we work assuming that  it has at most quadratic growth with respect to $p$ and is locally bounded with respect to $x$, with $H(t,x,0)$ globally bounded. Precisely, we assume that 
\be\label{H0}
H(t,x,0) \in \parelle\infty
\ee
and
\be\label{linder}
\begin{split} & 
\hbox{$\forall$ compact set $K\subset \Omega$, \,\, $\exists C_K>0$: }
 \\
 &\qquad  | H_p(t,x,p)| \le C_K (1+|p|) \qquad \forall p\in \R^N ,\,\, \mbox{and a.e.} \,\, x\in K, t\in[0,T]\,.
\end{split}
\ee
Of course, \rife{H0}--\rife{linder} imply, by integration, that
\be\label{quadgrow}
| H(t,x,p)| \le C_K (1+|p|^2)\qquad \forall p\in \R^N ,\,\, \mbox{and a.e.} \,\, x\in K, t\in[0,T]\, 
\ee
for a  possibly different constant $C_K$.  
\\
The invariance condition will be formulated in terms of the diffusion matrix $a(x)$ and the Hamiltonian function $H(t,x,p)$. Namely, we assume that there exist $\delta>0$ and $C>0$ such that  the following inequality holds:
\begin{equation}\label{invariance}
\begin{split}
&\mathrm{tr}(a(x)D^2d(x))-H_p(t,x,p)Dd(x)   \ge \frac{a(x) Dd(x) \cdot Dd(x)}{d(x)}- C d(x)\,
\\
&  \qquad  \hbox{$\forall\ p\in\mathbb{R}^N$ and a.e. $x\in\Gamma_\delta$, $t\in[0,T]$,}
\end{split}
\end{equation}
where, we recall, $\Gamma_\delta$ is the subset of $\Omega$ with $d(x)<\de$.

A typical case when assumption \rife{invariance} is satisfied occurs if there exists a  $N\times N$-matrix $\sigma\in W^{1,\infty}$ such that $a=\sigma\sigma^*$, 
\be\label{zehro}
\sigma^*(x)Dd(x)=0\hspace{2cm}\forall x\in\partial\Omega\,,
\ee
and
$$
\mathrm{tr}(a(x)D^2d(x))-H_p(t,x,p)Dd(x)\ge 0
$$
for all $p\in\mathbb{R}^N$ and all $x$ in a neighborhood of $\partial \Omega$. Moreover, in the case that $H_p(t,x,p)$ is  Lipschitz with respect to $x$ (uniformly in $t$ and $p$), the inequality can be required to hold only for $x\in \partial \Omega$, since \rife{invariance} will be equally satisfied for $x$ in some $\Gamma_\de$ provided the constant $C$ is large enough. This is the typical condition which is given in the literature for linear operators, i.e. if $H(t,x,p)= b(x)\cdot p$, see e.g. \cite{CDF}. 

However, we stress that assumption \rife{invariance} is meant to include more general examples. On one hand, this condition includes the case of $a=\sigma\sigma^*$ with $\sigma$ being only $1/2$-H\"older continuous. On another hand, even the uniformly elliptic case is included in our setting, indeed $a(x)$ could be non degenerate at the boundary  provided the drift part is sufficiently coercive in the (inward) normal direction. Situations of this kind were considered, for instance, in \cite{LP}.

\vskip0.5em
Finally, the assumptions on the coupling costs $F,G$. Here we assume that  $F$ is a map from $Q_T \times C^0([0,T]; \elle1) $ into $\R$. In particular, for any given $m\in C^0([0,T]; \elle1)$, $F(\cdot,\cdot,m)$ defines a function on $Q_T$.  We assume that 
\be\label{F}
\begin{split}
& \hbox{$m\mapsto F(\cdot,\cdot, m) $ maps bounded sets of $C^0([0,T]; \elle1)$ into   bounded sets of $L^\infty(Q_T)$,}
\\
& \qquad \hbox{and is continuous in the  $L^1(Q_T)$- topology.}
\end{split}
\ee
We wish to include two model examples  in the previous conditions. The simplest case is when $F$ acts locally on the density 
$m(t,x)$: this means, for instance,  that $F$ is given through a real function $f:Q_T \times \R \to \R$ so that 
$$
F(t,x, m):= f(t,x,m(t,x))\,.
$$
In this case the condition is satisfied whenever $f$ is continuous with respect to $m$ and is uniformly bounded.
A second class of examples is given by nonlocal functions $F$, as, for instance, $F= K \star m $ for some bounded convolution kernel $K$. 

A similar condition is assumed for $G$, although here we need to strengthen the requirements in order to ensure that $Du$ is bounded up to $t=T$ (unless $H$ is Lipschitz continuous, see also Remak \ref{rem-couplings}). Namely, we assume that $G$ is a map from $\Omega \times  \elle1 $ into $\R$ such that 
\be\label{G}
\begin{split}
& \hbox{$m\mapsto G(\cdot, m) $ is a  continuous  map from $ \elle1$ into $ \elle1$}
\\
& \quad \hbox{which maps bounded sets of $\elle1$ into  bounded sets of  $W^{1,\infty}(\Omega)$.}
\end{split}
\ee
As is customary in mean field game systems, we will require in addition some monotonicity of $F,G$ in order to have uniqueness of solutions.
\vskip0.5em

\subsection{Short statement of the main results.}  

We list here the three main results that we prove in the paper, standing on the assumptions previously introduced. The first one is just concerned with the Hamilton-Jacobi-Bellman equation. The notion of weak solution is a standard one and will be precisely given in Definition \ref{defhjb}. Under the invariance condition, it turns out that  the problem is well-posed in the class of (globally) bounded solutions, with no need of prescription of the boundary condition.

\begin{thm}\label{teo1}
Assume that  $a(x)$ and  $H(t,x,p)$ satisfy assumptions  \rife{deg},   \rife{convex}-\rife{linder} and the invariance condition \rife{invariance}, and that  $G\in L^{\infty}(\Omega)$. 

Then there is one and only one bounded weak solution of the problem
$$
\begin{cases}
-\partial_t u - \sum\limits_{i,j} a_{ij}(x) \partial_{ij}^2 u +H(t,x,Du)=0\,, \qquad (t,x)\in (0,T)\times\Omega\\
u(T)=G(x )\,,\qquad x\in  \Omega\,.
\end{cases}
$$
\end{thm}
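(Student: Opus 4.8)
The plan is to obtain the solution as a limit of solutions to penalized Neumann problems on $\Omega$, using the invariance condition \rife{invariance} to get the crucial $L^\infty$ bound that is uniform up to the boundary. First I would regularize: fix a sequence of smooth Hamiltonians $H_n$ (e.g. truncating the quadratic growth so that $H_n$ has linear growth, or inf-convolving in $p$) still satisfying \rife{convex}, \rife{H0}, \rife{linder}, and \rife{invariance} uniformly in $n$, and a smooth bounded terminal datum $G_n \to G$. For each $n$ I would solve the Neumann problem $-\partial_t u_n - \sum_{ij} a_{ij}\partial^2_{ij} u_n + H_n(t,x,Du_n) = 0$ in $Q_T$, $\partial_\nu u_n = 0$ on $(0,T)\times\partial\Omega$, $u_n(T) = G_n$; since $a$ is uniformly elliptic on $\Omega$ away from the boundary and $H_n$ has controlled growth, existence of a smooth (or at least $C^0([0,T];H^1)\cap L^\infty$) solution follows from standard quasilinear parabolic theory (Ladyzhenskaya–Solonnikov–Uraltseva-type results, or a fixed point on the linearized problem together with the gradient bounds for Neumann problems with quadratic Hamiltonians as in Lions–Porretta and related works). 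Using $H_n(t,x,0)\in L^\infty$ and \rife{H0}, comparison with the constant sub/supersolutions $\|G_n\|_\infty \pm \|H_n(\cdot,\cdot,0)\|_\infty (T-t)$ gives a uniform $L^\infty$ bound $\|u_n\|_{L^\infty(Q_T)} \le M$ independent of $n$; note this step does \emph{not} yet use the invariance condition.

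The heart of the argument is the interior gradient and compactness estimates that let us pass to the limit and, separately, to recognize the limit as a genuine weak solution of the \emph{boundary-value-free} problem. On any compact $K = D_r \subset\subset \Omega$, the uniform ellipticity constant $\lambda_r>0$ together with the uniform $L^\infty$ bound and the quadratic growth \rife{quadgrow} yields, via Bernstein-type arguments (or local $L^p$/Hölder estimates à la Boccardo–Murat–Puel for quasilinear equations with quadratic growth), a bound on $Du_n$ in $L^2_{loc}(Q_T)$ and indeed local Hölder bounds on $u_n$ and local $H^1$-in-$x$, $H^{-1}$-in-$t$ bounds; this gives, up to subsequences, $u_n \to u$ in $C^0_{loc}$ and $Du_n \to Du$ a.e. and weakly in $L^2_{loc}$, with $u\in L^\infty(Q_T)$, $\|u\|_\infty \le M$. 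The limit $u$ then solves the PDE in the distributional (weak) sense of Definition \ref{defhjb} in the interior, and the terminal condition $u(T)=G$ is recovered from the continuity in time of $u_n$ (uniform in $n$ near $t=T$, using the $L^\infty$ bound and an equicontinuity-in-time estimate that does not see the boundary). This produces a bounded weak solution; existence is done.

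Uniqueness is where the invariance condition \rife{invariance} is essential, and I expect it to be the main obstacle. Given two bounded weak solutions $u_1,u_2$, the standard device is to test the equation for $w = u_1 - u_2$ against an admissible test function and exploit convexity of $H$ (so that $H(t,x,Du_1) - H(t,x,Du_2) \le H_p(t,x,Du_1)\cdot Dw$, or a suitable bilinearization), reducing matters to a linear Fokker–Planck-type inequality for $w$ with a drift of the form $-H_p(t,x,Du_1)$. The difficulty is that we have no boundary condition and the solutions are only bounded (not, say, vanishing at $\partial\Omega$), so the usual integration-by-parts identity has a boundary term we cannot control directly. The resolution is to use a cutoff built from the distance function: multiply by $\zeta_\eps = \min(d(x)/\eps, 1)$ (or a smoothed version), integrate by parts, and show the boundary/near-boundary remainder terms vanish as $\eps\to 0$; this is exactly where \rife{invariance} enters, since the combination $\mathrm{tr}(a D^2 d) - H_p\cdot Dd - a Dd\cdot Dd / d$ appearing when one differentiates $\zeta_\eps$ twice and contracts with $a$ is controlled from below by $-Cd$, so the dangerous term has a sign that lets it be absorbed. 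Concretely one expects to run a Gronwall argument on $\int_\Omega |w(t)| \, dx$ (or on $\int_\Omega e^{-\lambda d} w^+ $ or similar), where the invariance inequality guarantees that the boundary layer contributes a nonnegative or $o(1)$ term rather than an uncontrolled one; combined with $w(T)=0$ this forces $w \equiv 0$. The technical care needed is to make the duality rigorous for merely bounded weak solutions — one may need to first establish that a bounded weak solution can be tested against the adjoint (linear) problem solved in the sense of item (b), i.e. to invoke the Fokker–Planck well-posedness of part (b) with drift $b = -H_p(t,\cdot,Du_1) \in L^2_{loc}$ as the dual object — and to verify that the invariance condition \rife{invariance} for $H$ is precisely the condition \rife{inv-lin}/\rife{invariance} required there for that drift.
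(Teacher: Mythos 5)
Your existence argument is essentially the paper's: penalize the Neumann problem, get a uniform $L^\infty$ bound from comparison with $\|G\|_\infty\pm\|H(\cdot,\cdot,0)\|_\infty(T-t)$ (no invariance needed), and pass to the limit by the local $H^1$/Aubin--Lions compactness from Lemma~\ref{compact-hjb} and Proposition~\ref{stab-hjb}.

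Your uniqueness route, however, diverges from the paper and has two genuine gaps. The paper does not run a duality/Gronwall argument at all: it fixes a solution $v$ and constructs the explicit perturbation $v_\eps = v + \eps^2(M-\log d(x)) + \eps\sqrt{T-t}$, uses convexity of $H$ to write $H(t,x,Dv_\eps)\ge H(t,x,Dv) - \eps^2 H_p(t,x,Dv)\cdot Dd/d$, recognizes that the invariance condition makes $-\log d$ a supersolution of the linearized operator up to a constant, and then concludes via an interior comparison principle since $u-v_\eps\to -\infty$ at $\partial\Omega$. Concerning your plan: first, the linear cutoff $\zeta_\eps=\min(d/\eps,1)$ cannot produce the term $aDd\cdot Dd/d$ that appears in \eqref{invariance} (it is piecewise affine in $d$, so $D^2\zeta_\eps$ only gives $D^2d/\eps$); the quadratic term $aDd\cdot Dd/d$ arises precisely from the second derivative of $\log d$, which is why the logarithm --- not a truncated linear function --- is the natural Lyapunov/barrier function here. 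Second, and more seriously, the duality route via part~(b) must handle the fact that $G$ is only $L^\infty$: by interior parabolic regularity one only has $|Du_1(t,x)|\le C_\delta/\sqrt{T-t}$ on $\{d\ge\delta\}$, so the drift $H_p(t,\cdot,Du_1)$ blows up as $t\to T$ and is not in $L^\infty((0,T);L^\infty_{loc})$ as required by Theorem~\ref{teo2}. The paper handles exactly this degeneracy by the $\eps\sqrt{T-t}$ term in $v_\eps$, whose time derivative $\eps/(2\sqrt{T-t})$ absorbs the $C\eps^2/\sqrt{T-t}$ error coming from applying \eqref{invariance} on the full domain via the locally Lipschitz bound on $v$. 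Without an analogue of this device, your Gronwall or Fokker--Planck pairing does not close near $t=T$, and the additional issue that $w=u_1-u_2$ only satisfies a differential \emph{inequality} (so it does not belong to the test class of Definition~\ref{deffp}) would also need to be addressed.
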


The second result gives, somehow, a counterpart for the Fokker-Planck equation. Indeed, under  the invariance condition the problem turns out to be well-posed in $\elle1$. Here the notion of weak solution is defined in a  dual way, see Definition \ref{deffp}, and incorporates somehow a transparent Neumann condition at the boundary.

With a slight abuse of notation, we denote by $L^\infty([0,T]; L^\infty_{loc}(\Omega))$ the space of measurable functions in $Q_T$ which are bounded on $(0,T)\times K$ for every compact subset $K\subset \Omega$.

\begin{thm}\label{teo2} 

Let $m_0\in  \elle1$, $m_0\ge0$.
Let $a\in W^{1,\infty}(\Omega)$ satisfy \rife{deg}.   
Assume that $  b\in L^\infty([0,T]; L^\infty_{loc}(\Omega))$ and that there exist  
$\delta_0, C>0$ such that the following inequality holds:
\begin{equation}\label{invariance-fp}
\mathrm{tr}(a(x)D^2d(x))-b(t,x)\cdot Dd(x)\ge   \frac{a(x) Dd(x) \cdot Dd(x)}{d(x)} - C\, d(x)
\end{equation}
for almost every $t\in (0,T)$ and  $x\in\Gamma_{\delta_0}$.  

Then there is one and only one  weak solution (in the sense of Definition \ref{deffp}) of the problem 
$$
\begin{cases}
\partial_t m - \sum\limits_{i,j} \partial_{ij}^2 (a_{ij}(x)m) -\mathrm{div}(m\, b(t,x))=0\,, \qquad (t,x)\in (0,T)\times\Omega\\
m(0)=m_0(x )\,,\qquad x\in  \Omega\,.
\end{cases}
$$
\end{thm}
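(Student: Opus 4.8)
The key observation is that the operator $\mathcal L$ entering the definition of weak solution (Definition \ref{deffp}) is itself an operator of the type treated in Theorem \ref{teo1}: it is the Hamilton--Jacobi operator associated to the linear (hence convex) Hamiltonian of the drift $b$, which satisfies \rife{H0} (indeed $H(\cdot,\cdot,0)\equiv 0$) and \rife{linder} (indeed $|H_p|=|b|\in L^\infty([0,T];L^\infty_{loc}(\Omega))$), and, most importantly, the structural inequality \rife{invariance-fp} is exactly the invariance condition \rife{invariance} for this Hamiltonian. Hence Theorem \ref{teo1}, together with its version allowing a bounded right-hand side and an arbitrary terminal datum (the one needed anyway for the mean field game system \rife{mfg}), applies to $\mathcal L$. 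In particular, for every $g\in L^\infty(Q_T)$ there is a unique bounded weak solution $\phi_g$ of $\mathcal L(\phi_g)=g$ in $Q_T$ with $\phi_g(T)=0$; being bounded, continuous in the interior (by the interior ellipticity \rife{deg}) and with $|\Gamma_\eps|\to 0$, it lies in $C^0([0,T];L^1(\Omega))$, hence is an admissible test function, and the maximum principle gives $\|\phi_g\|_\infty\le T\|g\|_\infty$.

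\textbf{Uniqueness.} If $m_1,m_2$ are two weak solutions with the same initial datum, then $w:=m_1-m_2$ satisfies $\int_0^T\int_\Omega w\,\mathcal L(\phi)=0$ for every admissible $\phi$. Taking $\phi=\phi_g$ with $g\in C_c^\infty(Q_T)$ arbitrary gives $\int_0^T\int_\Omega w\,g=0$ for all such $g$, hence $w\equiv 0$. The same computation, localised in time (choosing $g(t,x)=\eta^{-1}\mathbf 1_{(\tau-\eta,\tau)}(t)\,g_0(x)$ and letting $\eta\to 0$, using $m_i\in C^0([0,T];L^1(\Omega))$ and $\|\phi_g(0)\|_\infty\le\|g_0\|_\infty$), yields the $L^1$--contraction estimate $\|m_1(\tau)-m_2(\tau)\|_{L^1(\Omega)}\le\|m_1(0)-m_2(0)\|_{L^1(\Omega)}$ for all $\tau\in[0,T]$; this is used below to pass from a smooth datum to a general $m_0\in L^1(\Omega)$.

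\textbf{Existence.} For $m_0\in C_c^\infty(\Omega)$, $m_0\ge 0$, I would build $m$ by approximation on the exhausting sequence $\{D_n\}$ of smooth subdomains, on which $a$ is uniformly elliptic and $b$ is bounded: let $m_n\ge 0$ solve $\partial_t m_n-\sum_{ij}\partial_{ij}^2(a_{ij}m_n)-\mathrm{div}(m_n b)=0$ in $(0,T)\times D_n$ with the zero-flux (Neumann) boundary condition on $\partial D_n$ and $m_n(0)=m_0$ (for $n$ large), extended by $0$ to $\Omega$. The zero-flux condition gives mass conservation $\int_\Omega m_n(t)=\int_\Omega m_0$, so $\{m_n\}$ is bounded in $L^\infty(0,T;L^1(\Omega))$; the interior ellipticity \rife{deg} gives uniform interior parabolic estimates on each $D_k$, hence local compactness; and the invariance condition \rife{invariance-fp} provides tightness at $\partial\Omega$: testing the equation against a weight $\theta(d(x))$ increasing as $d\to 0^+$ (a truncation of $-\log d$), one checks that the second-order and drift contributions combine, via \rife{invariance-fp}, into a term controlled by $C\int_\Omega m_n$ plus a favourable boundary term on $\partial D_n$, whence $\sup_{n,t}\int_\Omega\theta(d)\,m_n(t)\,dx<\infty$ and thus $\sup_{n,t}\int_{\Gamma_\delta}m_n(t)\,dx\to 0$ as $\delta\to 0$. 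Along a subsequence $m_n\to m$ in $C^0([0,T];L^1(\Omega))$, with $m\ge 0$, $\int_\Omega m(t)=\int_\Omega m_0$, and $m$ solving the equation in $\mathcal D'(Q_T)$; the duality identity of Definition \ref{deffp} for a general admissible $\phi$ is then recovered by testing the interior equation against $\phi\,\zeta_\delta(d)$, with $\zeta_\delta$ a cut-off supported in $\{d>\delta/2\}$ and equal to $1$ on $\{d>\delta\}$, and letting $\delta\to 0$. Finally a general $m_0\in L^1(\Omega)$, $m_0\ge 0$, is treated by approximating it with smooth compactly supported data and invoking the $L^1$--contraction. (Alternatively one may define $m(t)$ directly by duality, setting $\langle m(t),g\rangle:=\int_\Omega m_0\,\psi^{g,t}(0)\,dx$ where $\psi^{g,t}$ solves $\mathcal L(\psi)=0$ in $(0,t)\times\Omega$ with $\psi(t)=g$; this is a nonnegative Radon measure on $\overline\Omega$ of total mass $\|m_0\|_{L^1}$, which charges no mass on $\partial\Omega$ because $\psi^{g_k,t}(0)\to 0$ in the interior when $g_k\downarrow\mathbf 1_{\partial\Omega}$ — the transparent-boundary effect of \rife{invariance} — and is absolutely continuous by interior smoothing, whence $m(t)\in L^1(\Omega)$; the semigroup property of $(\psi^{g,t})$ then gives the weak formulation.)

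The step I expect to be the main obstacle is precisely the near-boundary analysis hidden in the two parenthetical passages above: establishing the tightness and making the cut-off commutator terms (supported in $\Gamma_\delta\setminus\Gamma_{\delta/2}$ and involving $Dd$, $D^2d$, $b$, $D\phi$ weighted by $D\zeta_\delta$, $D^2\zeta_\delta$) vanish as $\delta\to 0$ requires a weighted energy estimate near $\partial\Omega$ — obtained, as for \rife{HJ} in Section 3, by testing against a function of $d$ and absorbing the singular term $a Dd\cdot Dd/d$ through \rife{invariance-fp} — and the difficulty is exactly that $a$ may degenerate at $\partial\Omega$ while $b$ may be unbounded near $\partial\Omega$. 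Through the duality with $\mathcal L$ described above, these estimates are in any case largely reducible to the corresponding ones already available for the Hamilton--Jacobi equation.
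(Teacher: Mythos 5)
Your plan is essentially the paper's proof: uniqueness by duality against solutions $\phi_g$ of the adjoint HJB problem (the paper takes $g=\mathrm{sgn}(m_1-m_2)$, you take arbitrary smooth $g$ — same argument), and existence via Neumann approximations on interior domains $\Omega_\eps=\{d>\eps\}$, with interior parabolic compactness together with a near-boundary weighted $L^1$ estimate driven by the invariance inequality applied to a logarithmic weight in $d$ (the paper's test function is $\phi_\eps=\log(d_\eps+\delta)-\log\delta$, which after normalization by $|\log\delta|$ yields $\int_\Omega m(t)=\int_\Omega m_0$; this treats a general $m_0\in L^1$ directly, sidestepping your reduction to smooth data and the truncation-level argument). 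The one place where the paper is cleaner than your sketch, and worth retaining, is exactly the step you flag as the main obstacle: to verify the duality identity of Definition \ref{deffp} for a general admissible test function $\phi$ with $\mathcal L(\phi)=f\in L^\infty$, the paper does not localize $\phi$ with a cut-off $\zeta_\delta(d)$ (which would indeed force you to control commutator terms involving $D\zeta_\delta$, $D^2\zeta_\delta$, $b$ near $\partial\Omega$, delicate when $b$ is unbounded there); instead it replaces $\phi$ by the solution $\phi_\eps$ of the Neumann problem $\mathcal L(\phi_\eps)=f$, $\phi_\eps(T)=0$ on $\Omega_\eps$, uses the stability result (Corollary \ref{stability}) to get $\phi_\eps\to\phi$, and tests $m_\eps$ directly against $\phi_\eps$ — no cut-off terms appear at all, precisely because it reuses the HJB machinery you correctly identified as the structural backbone.
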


Our third main result is concerned with the mean field game system, where we join the two previous results, using the conditions on the coupling terms and the viability assumption on the Hamiltonian. 

\begin{thm}\label{teo3} Assume that hypotheses \rife{deg}, \rife{convex}-\rife{linder}, \rife{invariance}, \rife{F} and \rife{G} hold true. 
Then there exists  one solution $(u,m)$ of  \rife{mfg}, in the sense of Definition  \ref{sol-mfg}.

If, in addition, $F$ and $G$ are monotone with respect to $m$, then the solution is unique.
\end{thm}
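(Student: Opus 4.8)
The plan is to prove Theorem~\ref{teo3} by a fixed-point argument built on top of Theorems~\ref{teo1} and~\ref{teo2}, followed by the standard Lasry--Lions duality computation for uniqueness. First I would set up the map $\Psi$ whose fixed points are the desired solutions: given $\mu\in C^0([0,T];\elle1)$ with $\mu(t)$ a probability density for each $t$, I let $u=u_\mu$ be the unique bounded weak solution of the HJB equation in Theorem~\ref{teo1} with running cost $F(t,x,\mu)$ and terminal datum $G(x,\mu(T))$ --- this is legitimate because \rife{F} and \rife{G} guarantee $F(\cdot,\cdot,\mu)\in L^\infty(Q_T)$ and $G(\cdot,\mu(T))\in W^{1,\infty}(\Omega)\subset L^\infty(\Omega)$. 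Then I would feed the drift $b(t,x):=H_p(t,x,Du_\mu(t,x))$ into the Fokker--Planck equation of Theorem~\ref{teo2} and let $m=m_\mu$ be its unique weak solution with $m(0)=m_0$; set $\Psi(\mu):=m_\mu$. The crux is to check that the invariance condition \rife{invariance-fp} for this particular $b$ is exactly \rife{invariance} evaluated at $p=Du_\mu$, so it holds a.e., and that $b\in L^\infty([0,T];L^\infty_{loc}(\Omega))$ because $Du_\mu$ is locally bounded (interior regularity of the HJB equation from interior ellipticity \rife{ell} plus \rife{linder}) --- this is what makes Theorem~\ref{teo2} applicable to the loop.

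Next I would verify the hypotheses of Schauder's fixed point theorem for $\Psi$ on a suitable convex compact subset of $C^0([0,T];\elle1)$. Compactness of the image: the $L^1$-solutions $m_\mu$ produced by Theorem~\ref{teo2} come (as announced in the introduction) as limits of penalized Neumann problems and enjoy uniform-in-$\mu$ equi-integrability and local Sobolev bounds, which via an Aubin--Lions type argument in $L^1_{loc}$, together with a tightness/no-mass-loss estimate near $\partial\Omega$ coming again from the invariance condition, give precompactness in $C^0([0,T];\elle1)$. Continuity of $\Psi$: if $\mu_n\to\mu$ in $C^0([0,T];\elle1)$, then $F(\cdot,\cdot,\mu_n)\to F(\cdot,\cdot,\mu)$ in $L^1(Q_T)$ and $G(\cdot,\mu_n(T))\to G(\cdot,\mu(T))$ in $\elle1$ by \rife{F}--\rife{G}; stability of bounded weak solutions of HJB (uniform $L^\infty$ bound from the comparison principle underlying Theorem~\ref{teo1}, local gradient bounds, and uniqueness of the limit) forces $Du_{\mu_n}\to Du_{\mu}$ in $L^2_{loc}$ up to subsequences, hence $b_n\to b$ strongly enough that the dual formulation of the Fokker--Planck equation passes to the limit and $m_{\mu_n}\to m_\mu$. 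Invariance of the convex set (probability densities, with the $C^0([0,T];\elle1)$ bound) is immediate: $\Psi(\mu)$ is nonnegative with total mass $1$ by the definition of weak solution tested against $\phi\equiv 1$.

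For uniqueness under monotonicity, I would run the classical Lasry--Lions argument. Given two solutions $(u_1,m_1)$ and $(u_2,m_2)$, I use $u_1-u_2$ as a test function in the (difference of the) Fokker--Planck equations and $m_1-m_2$ in the (difference of the) HJB equations --- which is permitted precisely because the weak-solution notion of Definition~\ref{deffp} accepts test functions $\phi$ with $\mathcal L(\phi)\in L^\infty$, and $u_1,u_2$ are bounded with the right integrability; the boundary terms vanish thanks to the transparent Neumann condition built into the Fokker--Planck formulation. Adding the two identities, the convexity \rife{convex} of $H$ produces a nonnegative bracket of the form $\int m_i\big(H(Du_j)-H(Du_i)-H_p(Du_i)\cdot(Du_j-Du_i)\big)$, while the coupling terms combine to $\int\int (F(t,x,m_1)-F(t,x,m_2))(m_1-m_2)+\int (G(x,m_1(T))-G(x,m_2(T)))(m_1(T)-m_2(T))\le 0$ by monotonicity; hence all terms vanish, giving $m_1=m_2$ (strict monotonicity, or the standard argument that $H$ strictly convex together with $m_i>0$ in the interior forces $Du_1=Du_2$ where $m_i>0$, then $u_1=u_2$ by uniqueness in Theorem~\ref{teo1}).

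The main obstacle I anticipate is the passage to the limit in the Fokker--Planck equation along the fixed-point iteration (both for compactness of $\Psi$ and for its continuity): one must control the behaviour of $m_\mu$ near $\partial\Omega$ uniformly in $\mu$ to rule out loss of mass and to get genuine $C^0([0,T];\elle1)$ (not merely $L^1_{loc}$) convergence, and one must ensure that the drifts $b_n=H_p(t,x,Du_{\mu_n})$ converge strongly enough despite being only locally bounded with quadratic-type dependence on $Du$. Both issues are handled by exploiting the invariance inequality \rife{invariance} as a quantitative barrier near the boundary --- the same mechanism that, per the discussion after item~(d), makes the penalized Neumann approximations converge --- but making this uniform over the class of competitors $\mu$ is the delicate point. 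Everything else (Schauder's theorem, the duality identity) is routine once these estimates are in place.
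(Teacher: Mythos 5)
Your existence argument is structured quite differently from the paper's, and the part you flag as ``delicate'' is precisely what is not supplied. The paper does not run a fixed point argument at the limit level. Instead it passes to the limit as $\vep\to 0$ in a family of approximating mean field game systems \eqref{mfgeps} posed on the inner domains $\Omega_\vep=\{d(x)>\vep\}$ with Neumann conditions; there $a(x)$ is uniformly elliptic, so solvability at the approximate level is standard, and the hard analytic content --- local compactness of the value functions (Lemma~\ref{compact-hjb}), local $L^1$ compactness of the densities (Lemma~\ref{compact-fp}), no loss of mass and genuine $C^0([0,T];\elle1)$ convergence (Proposition~\ref{stab-fp}), and stability of the HJB equation (Proposition~\ref{stab-hjb}) --- is already packaged. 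Your Schauder-at-the-limit route would require precompactness of the range of $\Psi$ in $C^0([0,T];\elle1)$, which demands equi-integrability and tightness of $\{m_\mu(t)\}$ uniformly over all admissible $\mu$ and all $t$; producing that uniform barrier estimate directly (rather than along an $\vep$-approximation of a single problem) is genuinely harder, and your sketch gestures at it without establishing it.

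The uniqueness argument has a concrete step that fails. Definition~\ref{deffp} only admits test functions $\phi$ with $\mathcal{L}(\phi)=-\phi_t-\mathrm{tr}(a\,D^2\phi)+b\cdot D\phi\in L^\infty(Q_T)$ (and $\phi(T)=0$), but $u_1-u_2$ does not satisfy this in the generality of Theorem~\ref{teo3}: from the HJB equation, $\mathcal{L}_1(u_1)=F(t,x,m_1)-H(t,x,Du_1)+H_p(t,x,Du_1)\cdot Du_1$, which is only \emph{locally} bounded. Without the extra hypotheses of Section~6 (see Theorem~\ref{lipthm}), $Du_i$ need not be bounded up to $\partial\Omega$, so $H(t,x,Du_1)$ and $H_p(t,x,Du_1)\cdot Du_1$ may blow up near the boundary and $\mathcal{L}(u_1-u_2)\notin L^\infty$; moreover $\phi(T)=u_1(T)-u_2(T)=G(\cdot,m_1(T))-G(\cdot,m_2(T))$ is not zero, so the admissibility condition fails on two counts. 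The paper circumvents both obstacles by performing the Lasry--Lions duality on the Neumann approximations $(u_\vep,m_\vep)$, $(v_\vep,\mu_\vep)$ on $\Omega_\vep$, where the solutions are smooth up to $\partial\Omega_\vep$ and the integration by parts in $\int_0^T\int_{\Omega_\vep}\partial_t\bigl((u_\vep-v_\vep)(m_\vep-\mu_\vep)\bigr)$ is legitimate, and then passes to the limit using the $C^0([0,T];\elle1)$ convergence of the densities combined with Fatou's lemma on the nonnegative convex-bracket terms. A direct duality at the level of the limit solutions is not justified under the stated hypotheses.
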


The further results that we prove are concerned with the regularity of solutions, but in this case we postpone the proper statements to Section 7.

\vskip0.5em

\subsection{Probabilistic interpretation and examples.}

Now we give the  probabilistic interpretation of the system. 
Given a probability space $(\tilde{\Omega},(\mathcal{F}_t)_t,\mathbb{P})$ (we use $\tilde{\Omega}$ instead of the classical $\Omega$ to avoid confusion with the state space $\Omega$ previously defined) and a Brownian motion ${(B_t)}_t$ adapted to the filtration ${(\mathcal{F}_t)}_t$, we consider for $s>t$ the solution $(X_s)_s$ of the following stochastic differential equation:
\begin{equation}\label{SDE}
\begin{cases}
dX_s=b(s,X_s,\alpha_s)ds+\sqrt{2}\sigma(X_s)dB_s\\
X_t=x
\end{cases}
\end{equation}
where $b$ and $\sqrt{2}\sigma$ are, as usual, the \textit{drift} and the \textit{diffusion} coefficients of the process $X_.$, and where the control $\alpha_s$ is a progressively measurable process adapted to the filtration $\mathcal{F}_t$ and taking values in  $A\subseteq\mathbb{R}^N$.
\\
We recall the \emph{Ito's formula}: if $\phi\in\mathcal{C}^{1,2}[0,T]\times\R^N)$, then we have
\begin{equation*}\begin{split}
d\phi(s,X_s)&=\left(\phi_t(s,X_s)+\mathrm{tr}(a(X_s)D^2\phi(s,X_s))+b(s,X_s,\alpha_s)\cdot\nabla\phi(s,X_s)\right)ds\ +\\&+\sqrt{2}(\nabla\phi(s,X_s))^*\sigma(X_s)dB_s\,.
\end{split}\end{equation*}
In many applications, it is required that the process $(X_t)_t$  remains in $\Omega$ for every $t\ge0$ and {\it for all available controls}. This leads to the terminology of {\it invariance condition} for assumption \rife{invariance}, which is justified in view of the following result. 

\begin{prop}
Let  $\sigma\in W^{1,\infty}(\Omega)$ and $b(s,x,\alpha)$ be (locally) Lipschitz with respect to the time and space variables, with a Lipschitz constant (locally) uniform in $\alpha$, and suppose that, for some $\de>0$ and $C>0$:
\begin{equation}\label{daprato}\begin{split}
& \mathrm{tr}(a(x)D^2d(x))+ b(s,x,\alpha)\cdot Dd(x) \ge \frac{a(x) Dd(x) \cdot Dd(x)}{d(x)}- C d(x)\,
\\ & \quad \forall (s,x)\in[t,T]\times\Gamma_\delta\,,\quad \forall \alpha \in A   \,,
\end{split}\end{equation}
where $a=\sigma\sigma^*$. Then, if $(X_s)_s$ is the solution of \eqref{SDE} with starting point $x\in\Omega$, we have
\begin{equation}\label{probinv}
\mathbb{P}(\{X_s\in\Omega\spazio\forall s>t\})=1\spazio.
\end{equation}
\begin{proof}
The proof is classical (see e.g. \cite{BCR}, \cite{CDF} for similar results) but we include it  for the reader's convenience and because condition \rife{daprato} applies to a more general setting than usual.\\
Let $(X_s)_s$ be the process solving \eqref{SDE}. The existence and uniqueness of  $X_t$
is ensured by the local Lipschitz character of $b$, $\sigma$.
For a bounded set $E\in\R^N$ we call $\tau_E$ the exit time from $E$ of the process $X_s$: for $\omega\in\tilde{\Omega}$
$$
\tau_E(\omega)=\inf\left\{ s\ge t\ |\ X_s(\omega)\notin E\right\}\,.
$$
So, proving \eqref{probinv} is equivalent to prove that $\P\left(\tau_\Omega<+\infty\right)=0$. To this purpose, we will show that, for all $s>t$, we have 
\begin{equation}\label{metroA}
\P\left(\tau_\Omega\le s\right)=0\,.
\end{equation}
Indeed, since
\begin{equation*}\begin{split}
&\left\{\tau_\Omega\le s \right\}\subseteq\left\{\tau_\Omega\le r \right\}\qquad\mbox{for }s\le r\\
&\mbox{and}\qquad \bigcup\limits_s\left\{\tau_\Omega\le s \right\}=\left\{\tau_\Omega<+\infty\right\}\,,
\end{split}\end{equation*}
then the assertion follows thanks to the monotone convergence theorem.  To prove \eqref{metroA}, we will show that 
\begin{equation*}
V(x):=-\log(d(x)) 
\end{equation*}
is, roughly speaking,  a super solution  up to a  constant.  Indeed, 
according to \eqref{daprato}  we obtain
\begin{align*}
\mathrm{tr}(a(x)D^2 V)+b(s,x,\alpha)\cdot DV & 
=-\frac{\mathrm{tr}(a(x)D^2d(x))+ b(s,x,\alpha)\cdot Dd(x)}{d(x)} \\
& \quad +\frac{a(x)Dd(x)\cdot Dd(x)}{d(x)^2}\le C\,,
\end{align*}
for each $(s,x)\in[t,T]\times\Gamma_{\delta}$ and for each $\alpha\in A$. 
Now, by a standard localization argument, we obtain a non-negative $\mathcal{C}^2$ function $U$ such that
\begin{equation*}
\begin{cases}
U(x)=V(x)&\mbox{for }x\in\Gamma_{\frac{\delta}{2}}\,,\\
\mathrm{tr}(a(x)D^2U(x))+b(s,x,\alpha)\cdot DU(x)\le C\qquad&\mbox{for }x\in\Omega\,,\ \,s\in[t,T]\,,\ \,\alpha\in A\,;
\end{cases}
\end{equation*}
we recall that the constant $C$ can change from line to line. 
To conclude, we consider a sequence of compact domains $\{D_n\}_n$ converging to $\Omega$, and the associated stopping times $\tau_{D_n}$. Applying Ito's Formula to $U$ and taking the expectation, we have
\begin{align*}
\E\left[U(X_{s\wedge\tau_{D_n}})\right]=U(x)+\E\left[\int_t^{s\wedge\tau_{D_n}}\left(\mathrm{tr}(a(X_r)D^2U(X_r)+b(r,X_r,\alpha_r)\cdot DU(X_r)\right)dr\right] 
\end{align*}
hence
$$
\E\left[U(X_{s\wedge\tau_{D_n}})\right]\leq U(x)+C(s-t)<+\infty 
$$
since $x\in\Omega$. Using Fatou's Lemma we get
\begin{align*}
\E\left[U(X_{s\wedge\tau_{\Omega}})\right]\le\liminf\limits_{n\to\infty}\E\left[U(X_{s\wedge\tau_{D_n}})\right]\le U(x)+C(s-t)<+\infty\,.
\end{align*}
Since $U(x)$ blows-up if $x\in \partial\Omega$, this implies
\begin{align*}
\P(\tau_\Omega\le s)=0\,.
\end{align*}
\end{proof}
\end{prop}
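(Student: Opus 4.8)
The plan is to prove that $\Omega$ is invariant by exhibiting $-\log d$ as a Lyapunov function whose generator is bounded in a boundary strip, and then running a standard stopping-time/Fatou argument. Set $V(x) := -\log d(x)$, which is $C^2$ on $\Gamma_{\eps_0}$ and blows up on $\partial\Omega$. Since $DV = -Dd/d$ and $D^2V = -D^2d/d + (Dd\otimes Dd)/d^2$, one gets, for every $x\in\Gamma_\delta$, $\alpha\in A$, $s\in[t,T]$,
\[
\mathrm{tr}(a(x)D^2V)+b(s,x,\alpha)\cdot DV = -\frac{\mathrm{tr}(a(x)D^2d(x))+b(s,x,\alpha)\cdot Dd(x)}{d(x)}+\frac{a(x)Dd(x)\cdot Dd(x)}{d(x)^2}\,,
\]
and the invariance hypothesis \rife{daprato} is precisely what makes the right-hand side $\le C$. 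So $V$ is a nonnegative function (up to shrinking $\delta$) with generator bounded above by a constant near the boundary.

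Next I would globalize this. By a routine cut-off away from $\partial\Omega$ one constructs a nonnegative, time-independent $U\in C^2(\Omega)$ with $U=V$ on $\Gamma_{\delta/2}$ and $\mathrm{tr}(a(x)D^2U(x))+b(s,x,\alpha)\cdot DU(x)\le C$ on all of $\Omega$, uniformly in $s\in[t,T]$ and $\alpha\in A$. Away from the boundary this is immediate because on every fixed compact subset of $\Omega$ the coefficients are bounded (here one uses $\sigma\in W^{1,\infty}$ and $b\in L^\infty_{loc}$), so the inequality can be forced everywhere by enlarging $C$; near the boundary it is inherited from the previous step. Crucially, $U\equiv+\infty$ on $\partial\Omega$.

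Then I fix $s>t$ and aim at $\P(\tau_\Omega\le s)=0$, where $\tau_\Omega$ is the first exit time from $\Omega$; this suffices, since $\{\tau_\Omega\le s\}\uparrow\{\tau_\Omega<+\infty\}$ as $s\to\infty$ and monotone convergence then yields $\P(\tau_\Omega<+\infty)=0$, i.e. \rife{probinv}. Take a $C^2$ compact exhaustion $D_n\uparrow\Omega$ with exit times $\tau_{D_n}\uparrow\tau_\Omega$. On $[t,s\wedge\tau_{D_n}]$ the process stays in a compact subset of $\Omega$ where $\sigma,b$ are Lipschitz, so \eqref{SDE} is classically well-posed there; applying Itô's formula to $U$, using the generator bound and the fact that the stochastic integral is a true martingale (bounded coefficients), and taking expectations gives
\[
\E\big[U(X_{s\wedge\tau_{D_n}})\big]\le U(x)+C(s-t)\,.
\]
Fatou's lemma (continuity of paths, lower semicontinuity of $U$, $s\wedge\tau_{D_n}\to s\wedge\tau_\Omega$) yields $\E[U(X_{s\wedge\tau_\Omega})]\le U(x)+C(s-t)<+\infty$. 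But on $\{\tau_\Omega\le s\}$ path continuity forces $X_{s\wedge\tau_\Omega}=X_{\tau_\Omega}\in\partial\Omega$, where $U=+\infty$; finiteness of the expectation therefore forces $\P(\tau_\Omega\le s)=0$, which is the claim.

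The Hessian computation for $-\log d$ is the quick part; the only points requiring a little care are the cut-off producing the global $C^2$ Lyapunov function with the correct sign of the generator, and the claim $\tau_{D_n}\uparrow\tau_\Omega$ together with the legitimacy of Itô's formula before the limit — both entirely standard. I do not expect a genuine obstacle: the whole content is that \rife{daprato} is calibrated so that $-\log d$ has bounded generator.
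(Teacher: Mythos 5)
Your proof is correct and follows essentially the same approach as the paper's: the same Lyapunov function $V=-\log d$, the same computation showing \rife{daprato} makes its generator bounded near the boundary, the same localization to a global $U$, and the same Itô/Fatou stopping-time argument. No meaningful differences.
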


For $t\in[0,T]$ and $x\in\Omega$, we define now the value function  
\begin{align*}
u(t,x):=\inf\limits_{(\alpha_s)_s\subseteq A}\mathbb{E}\left[\int_t^T\left(F(s,X_s,m(s))+L(s,X_s,\alpha_s)\right)ds+G(X_T,m(T))\right]\spazio,
\end{align*}
where, for every $s$, $m(s)$ is a probability density function.\\ 
Here $F$ and $G$ are the \emph{cost functions} and the \emph{Lagrangian} $L$ satisfies standard conditions. Typically, we require the strict convexity of the function $L$. In a  usual way, one can apply the Ito's formula and the dynamic programming principle to obtain a Hamilton-Jacobi-Bellman equation for the function $u$. So defining the Hamiltonian $H$ as
\begin{align*}
H(t,x,p):=\sup\limits_{\alpha\in A}\left(-b(t,x,\alpha)\cdot p-L(t,x,\alpha)\right)\spazio,
\end{align*}
the value function $u$ turns out to be the solution of the following equation:
\begin{align*}
\begin{cases}
-\partial_t  u -\mathrm{tr}(a(x)D^2u)+H(t,x,Du)=F(t,x,m)\\
u(T)=G(x,m(T))\,.
\end{cases}.
\end{align*}
Moreover, we obtain a feedback optimal control $b=-H_p(s,x,Du(s,x))$. Plugging this control in \eqref{SDE} and solving the SDE gives the optimal trajectories $(\tilde{X}_s)_s$. We say that the system is in equilibrium if  the law of $X_s$ coincides with  $m(s)$ for every $s\in [0,T]$. Since the law of a solution of \eqref{SDE} solves a Fokker-Planck equation, we obtain a couple $(u,m)$ solution of \eqref{mfg}.

We give here two typical examples in which the hypotheses made so far upon $H$ are satisfied. In particular, the invariance condition may be satisfied by using controlled perturbations of linear invariant processes. 
\vskip1em
{\bf Example 1 }{\it (bounded controls)}. 

We consider a set of controls $A\subset\mathbb{R}^N$ which is compact and a positive number $M$. We take 
\begin{align*}
b(x,\alpha):=MDd(x)+\alpha\,.
\end{align*}
Then, for any $a(x)$ such that $a(x)Dd(x)\cdot Dd(x)=0$ on $\partial \Omega$,  the invariance condition  \eqref{invariance}  is satisfied for $M$ sufficiently large.
\\
Indeed, since the set of controls is bounded and  $a$ is Lipschitz, we have
$$
\frac{a(x) Dd(x)\cdot Dd(x) }{d(x)} - \mathrm{tr}(a(x)D^2d(x))- \alpha\cdot Dd(x) \leq C
$$
for some constant $C>0$ independent of $\alpha$. So \rife{invariance} holds provided $M$ is large enough. Let us now check the other conditions assumed upon $H$. 
In this situation, the Hamiltonian $H$ takes the form
\begin{align*}
H(x,p)=\sup\limits_{\alpha\in A}\left(-\alpha\cdot p-M Dd(x)\cdot p- L(x,\alpha)\right)\spazio.
\end{align*}	
Of course  $H$ is a convex function with respect to the variable $p$. Assume further that $L$ is strictly convex with respect to the last variable. Then the supremum that arises in the definition of $H(x,p)$ is attained at a unique point,  say $\alpha_{p,x}$, and the mapping $(x,p)\mapsto \alpha_{p,x}$ is continuous. In particular,
$H(x,p)$ is a continuous function and one can further  check  that it is differentiable with respect to $p$ with 
\be\label{hpex}
H_p(x,p)=-b(x,\alpha_{p,x}) = - M Dd(x) -\alpha_{p,x}\spazio,
\ee
which is continuous in both arguments and  bounded uniformly with respect to $(x,p)$. Thus, $H$ satisfies \rife{convex}--\rife{linder}.

In addition, we notice that,  if $L$ is Lipschitz with respect to $x$,  uniformly  in $\alpha$, then $H$ is Lipschitz with respect to $x$ and
$$
H_x= - M\, D^2 d(x) p - L_x (x,\alpha_{p,x})\,,
$$
which satisfies the growth condition $|H_x(x,p)|\le C(1+|p|)$.
If we have further that $L$ is $C^1$ with respect to $x$ with $L_x$ continuous with respect to $\alpha$, then $H$ is also $\mathcal{C}^1$ in both variables.

Finally, let us stress that  a similar example can be adapted to the case that $a(x)$ does not degenerate at the boundary, namely if $a(x) >0$ in $\overline \Omega$. In that case, it is enough to take $b(x,\alpha)= M\, \frac{Dd(x)}{d(x)} + \alpha$ in order to build a similar  example. 
%
%

\vskip1em
{\bf Example 2 }{\it (unbounded controls and coercive Hamiltonian)}. 

Here we consider  a case in which the set of controls is unbounded. This gives us an Hamiltonian with super-linear growth in $p$.

We take $A=\{\alpha\in\R^N\mbox{ s.t. }\alpha_i\ge0\ \forall i\}$ and we set
\begin{align*}
b(x,\alpha):=MDd(x)+B(x)\alpha\spazio,
\end{align*}
where $\forall x$ $B(x)$ is a $N\times N$-real valued matrix. Let $c_0\ge0$ be such that
	$$
	\mathrm{tr}(a(x)D^2d(x))\ge-c_0\spazio.
	$$
Then we have
\begin{align*}
\mathrm{tr}(a(x)D^2d(x))+b(x,\alpha)Dd(x)\ge M-c_0+ B(x)\alpha \cdot Dd(x) \ge M-c_0\spazio,
\end{align*}
if we choose $B$ such that
\be\label{Bx}
B(x)\alpha \cdot Dd(x)\ge0\spazio,\hspace{1cm}\forall\alpha\in A\spazio.
\ee
For example, we can take $B(x)_{ij}=Dd(x)_i \delta_{ij}$.  If \rife{Bx} holds, then the invariance condition \rife{invariance} is satisfied provided $M$ is sufficiently large.

Let us suppose further that the matrix $B(x)$  is bounded and continuous. As in the previous example, let the function $L(x,\alpha)$ be continuous in both arguments and strictly convex in $\alpha$, and assume the following coercivity condition:  $\exists \eta>0$, $q>1$, $c_0>0$ such that
$$
L(x,\alpha)\ge\eta|\alpha|^q-c_0\hspace{2cm}\forall\alpha\in A\spazio,\spazio x\in\overline{\Omega}\spazio.
$$
Then one can readily check the properties of $H$ as before. In particular, the supremum that arises in the definition of $H(x,p)$ is attained at a unique point $\alpha_{p,x}$, which is continuous with respect to $(x,p)$ and now satisfies the estimate:
$$
|\alpha_{p,x}| \leq C (1+ |p|^{q'-1}) \qquad \forall (x,p)\,,
$$
where $q'$ is the conjugate exponent of $q$, i.e. $q'=\frac{q}{q-1}$. As a consequence, there exists a constant $C>0$ such that
$$
-C(1+|p|)\le H(x,p)\le C(1+|p|^{q'})\spazio.
$$
Similarly, the  differentiability of $H$ with respect to $p$ and formula \rife{hpex} imply that
$$
|H_p(x,p)| \leq C (1+ |p|^{q'-1})\,,
$$
so that  $H$ has at most quadratic growth  for $q\geq 2$ and satisfies \rife{convex}--\rife{linder}.

Moreover, if $L$ and $B$ are $\mathcal{C}^1$ with respect to $x$, and the derivative of $L$ is continuous in $\alpha$, then $H$ is $\mathcal{C}^1$ in both variables. Finally, we notice that, if $L_x$ has a linear growth in $\alpha$, then $|H_x(x,p)|\le C(1+|p|^{q'})$.

\section{The Hamilton-Jacobi-Bellman equation}

In this Section we study the  Hamilton-Jacobi-Bellman (HJB) equation 
\begin{equation}\label{hjb}
\begin{cases}
-u_t-\mathrm{tr}(a(x)D^2u)+H(t,x,Du)=0 & \hbox{in $(0,T)\times \Omega$,}\\
u(T)=G(x) & \hbox{in $ \Omega$}
\end{cases}\spazio.
\end{equation}
under conditions of invariance of the domain (see \rife{invariance}).  In particular, we observe that no boundary condition is prescribed. Here, the boundedness of $u$ will be enough to characterize the solution.

Note that in this formulation we do not have a function $F$, since, for $m$ fixed, it can be included in the Hamiltonian $H$. Since $F$ does not depend on $Du$, the derivative $H_p$ does not change, and the conditions of invariance \eqref{invariance} are not affected from this inclusion.

We assume that $G$ is a bounded function  and the Hamiltonian $H$ satisfies \rife{H0} and  \rife{quadgrow}.

Different notions of solutions could be used for problem \rife{hjb}. Since we are assuming $a$ to be Lipschitz continuous, the problem can be formulated in divergence form as well, namely
\begin{equation*}
\begin{cases}
-u_t-\mathrm{div}(a(x)Du)+\tilde{b}(x)\cdot Du+H(t,x,Du)=0 \\
u(T)=G(x)
\end{cases}
\end{equation*}
where $\tilde{b}$ is defined as
\begin{equation}\label{tilde}
\tilde{b}_j(x)=\overset{N}{\underset{i=1}{\sum}}\frac{\partial a_{ij}}{\partial x_i}(x)\spazio,\hspace{2cm}j=1,\ldots N\spazio.
\end{equation}
This fact allows us  to use a weak (distributional) formulation which avoids any continuity requirement on the solution as well as on $F,H$ with respect to $t$ and $x$. The natural growth condition \rife{quadgrow} also leads us to consider local $H^1$ solutions as defined below.
\vskip0.4em

\begin{defn}\label{defhjb}
We say that $u$ is a weak solution (resp. subsolution, supersolution) of the problem \eqref{hjb} if
\begin{itemize}
			\item[(i)] $u\in L^\infty([0,T]\times\Omega)$;
			\item[(ii)] $u\in L^2([0,T];W^{1,2}(K))$ for each $K\subset\subset\Omega$;
			\item[(iii)] $\forall\phi\in C_c^\infty((0,T]\times\Omega)$ (resp. $\ge0$) the weak formulation holds:
			\begin{equation*}\begin{split}
			\intif &u\phi_t\spazio dxdt\ + \intif a(x)Du\cdot D\phi\, dxdt\ +\\+ &\intif (H(t,x,Du)+\tilde{b}(x)Du)\phi\spazio dxdt=\into G(x)\phi(T)dx\spazio,
			\end{split}\end{equation*}
			(resp. $\le$, $\ge$), where $\tilde{b}$ is defined above.
		\end{itemize}
		\end{defn}
		
\vskip0.4em
\begin{rem} 
We observe that, if $u$ is a weak solution of \eqref{hjb}, then $u\in\mathcal{C}([0,T];L^p(\Omega))$ for each $p\ge1$.\\
Indeed, from $(ii)-(iii)$ we have $u_t\in L^2([0,T];W^{-1,2}(K))+L^1((0,T)\times K)$ for each $K\subset\subset\Omega$, and so $u\in C([0,T]; L^1(K))$ (see \cite[Theorem 2.2]{P-Arma}). 
Since $u\in L^\infty([0,T]\times\Omega)$, one can actually conclude that $u\in C([0,T];L^p(\Omega))$ for each $p\ge1$.
\end{rem}

\subsection{Existence of solutions}

We start by proving the existence of at least one weak solution. This is achieved without using the invariance condition and actually follows by a  standard use of global bounds and local compactness. The next lemma is by now standard, following the arguments in \cite{BMP}. For the reader's convenience, since the statement may not be found exactly in this form  in previous references, we will give a short proof in the Appendix. 

\begin{lem}\label{compact-hjb}  Let $\{\Omega_{\eps}\}$ be a sequence of domains such that $\Omega_{\eps}\subseteq\Omega_\eta\subseteq\Omega$ for $\vep>\eta$, and $\bigcup_\vep \Omega_\vep=\Omega$.  Let $H_\vep$ be a sequence of Carath\'eodory functions such that 
$$
|H_\vep(t,x,p)| \leq C_\vep (1+ |p|^2) \quad  \hbox{for a.e. $(t,x)\in  (0,T)\times \Omega_\vep$ and $\forall p\in \R^N$,}
$$
where $C_\vep$ is  bounded (independently of $\vep$) in $(0,T)\times K$, for every compact set $K\subset \Omega$.

Assume that $a_\vep$ is a sequence of matrices which is uniformly bounded and, locally, uniformly coercive.
Let $u_\vep$ be a sequence of solutions of 
\be\label{hjbeps}
-(u_\eps)_t-\mathrm{div}(a_\vep (x)Du_\eps)+H_\vep(t,x,Du_\eps)=0\,,\quad (t,x)\in[0,T]\times\Omega_{\eps}\,,
\ee
such that  $\|u_\vep\|_\infty$ is uniformly bounded.

Then there exists a function $u\in L^\infty(Q_T) \cap L^2(0,T; W^{1,2}_{loc}(\Omega))$ and   a subsequence $u_\vep$ converging to $u$ weakly in $L^2(0,T; W^{1,2}(K))$ and strongly in $L^p((0,T)\times K)$, for all compact sets $K\subset\subset\Omega$ and all $p<\infty$.

In addition, if $a_\vep(x)$ converges almost everywhere in $\Omega$ to some matrix $a(x)$, then  $u_\vep$ converges to $u$ strongly in $L^2(0,t; W^{1,2}(K))$
for all $t<T$, and the convergence holds up to $t=T$  if $u_\vep(T)$ converges almost everywhere in $\Omega$.
\end{lem}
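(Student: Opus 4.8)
The proof naturally splits into two parts: the compactness statement (which uses neither the invariance condition nor any convergence of the data), and the improvement to strong local $H^1$ convergence under the extra hypotheses. Throughout, I fix a compact $K\subset\subset\Omega$, choose an intermediate $K\subset\subset K'\subset\subset\Omega$ and a cut-off $\xi\in C_c^\infty(\Omega)$ with $\xi\equiv1$ on $K$ and $\mathrm{supp}\,\xi\subset K'$, and I write $\lambda_{K'}>0$, $C_{K'}>0$ for the (uniform in $\eps$) local ellipticity and growth constants on $K'$. All the weak-form computations with nonsmooth test functions below are justified by the standard time-regularization (Steklov average) argument.

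\emph{Step 1 (local energy bound and weak compactness).} I would test \rife{hjbeps} with $\Theta(u_\eps)\xi^2$, where $\Theta(s)=s\,e^{\gamma s^2}$. Expanding the diffusion term gives $\int a_\eps Du_\eps\!\cdot\!Du_\eps\,\Theta'(u_\eps)\xi^2$ plus a lower order term containing $D(\xi^2)$, while the quadratic growth of $H_\eps$ yields $|H_\eps(t,x,Du_\eps)\Theta(u_\eps)\xi^2|\le C_{K'}(1+|Du_\eps|^2)|\Theta(u_\eps)|\xi^2$. Since $\|u_\eps\|_\infty$ is uniformly bounded, choosing $\gamma$ large enough (depending only on $C_{K'}/\lambda_{K'}$, through the elementary inequality $\Theta'(s)-\tfrac{2C_{K'}}{\lambda_{K'}}|\Theta(s)|\ge\tfrac12$, valid once $\gamma\ge C_{K'}^2/\lambda_{K'}^2$) lets the quadratic gradient contribution be absorbed by the diffusion term; the remaining pieces ($H_\eps(t,x,0)$, the $D(\xi^2)$ term, the time boundary terms involving $u_\eps(0)$, $u_\eps(T)$) are controlled using the uniform $L^\infty$ bound. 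This produces $\int_0^T\!\int_K|Du_\eps|^2\,dx\,dt\le C(K)$ uniformly in $\eps$. Plugging this back into \rife{hjbeps} shows $(u_\eps)_t$ is bounded in $L^1((0,T)\times K)+L^2(0,T;W^{-1,2}(K))$, so by an Aubin--Lions/Simon compactness argument (in the form recalled for the remark following Definition \ref{defhjb}, cf.\ \cite[Theorem 2.2]{P-Arma}) a subsequence of $u_\eps$ converges strongly in $L^1((0,T)\times K)$ and a.e.\ in $Q_T$ to a limit $u\in L^\infty(Q_T)$, with $Du_\eps\rightharpoonup Du$ weakly in $L^2((0,T)\times K)$. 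The uniform $L^\infty$ bound upgrades this to strong convergence in $L^p((0,T)\times K)$ for every $p<\infty$, and a diagonal extraction over an exhaustion $D_n\uparrow\Omega$ gives the first assertion.

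\emph{Step 2 (strong local $H^1$ convergence).} Now assume in addition $a_\eps\to a$ a.e. Set $w_\eps:=u_\eps-u$ and test \rife{hjbeps} with $\psi(w_\eps)\xi^2$, $\psi(s)=s\,e^{\gamma' s^2}$, with $\gamma'$ again large depending on $C_{K'}/\lambda_{K'}$. Integrating on $(t_1,t_2)\times\Omega$, I expect the following. The time term gives $\int_\Omega\Psi(w_\eps(t_1))\xi^2-\int_\Omega\Psi(w_\eps(t_2))\xi^2$, where $\Psi'=\psi$, $\Psi\ge0$, $\Psi(0)=0$, plus a remainder $\int_{t_1}^{t_2}\langle u_t,\psi(w_\eps)\xi^2\rangle$ that tends to $0$ because $\psi(w_\eps)\to0$ strongly in $L^2_{loc}$ (dominated convergence) while $Dw_\eps\rightharpoonup0$. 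The diffusion term, after writing $a_\eps Du_\eps=a_\eps Dw_\eps+a_\eps Du$, produces the good quantity $\int_{t_1}^{t_2}\!\int a_\eps Dw_\eps\!\cdot\!Dw_\eps\,\psi'(w_\eps)\xi^2$ plus terms that vanish: those with $D(\xi^2)$ since $\psi(w_\eps)\to0$ strongly in $L^2_{loc}$, and the cross term $\int a_\eps Du\!\cdot\!Dw_\eps\,\psi'(w_\eps)\xi^2$ since $a_\eps Du\,\psi'(w_\eps)\xi^2\to a\,Du\,\xi^2$ strongly in $L^2_{loc}$ (this is exactly where the a.e.\ convergence of $a_\eps$ is used) against $Dw_\eps\rightharpoonup0$. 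Finally, the Hamiltonian term is bounded, using $|H_\eps(t,x,Du_\eps)|\le C_{K'}(1+|Du_\eps|^2)$ and $|Du_\eps|^2\le2|Dw_\eps|^2+2|Du|^2$, by $\tfrac{2C_{K'}}{\lambda_{K'}}\int_{t_1}^{t_2}\!\int a_\eps Dw_\eps\!\cdot\!Dw_\eps\,|\psi(w_\eps)|\xi^2+o(1)$. Collecting these, discarding $\int_\Omega\Psi(w_\eps(t_1))\xi^2\ge0$, and using $\psi'-\tfrac{2C_{K'}}{\lambda_{K'}}|\psi|\ge\tfrac12$ together with $a_\eps\ge\lambda_{K'}\mathrm{Id}$ on $K'$, I obtain
\[
\frac{\lambda_{K'}}{2}\int_{t_1}^{t_2}\!\int_K|Du_\eps-Du|^2\,dx\,dt\ \le\ \int_\Omega\Psi(w_\eps(t_2))\,\xi^2\,dx\ +\ o(1),
\]
with the $o(1)$ uniform in $t_1,t_2\in[0,T]$. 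If $u_\eps(T)\to u(T)$ a.e., taking $t_2=T$ the right-hand side tends to $0$ by dominated convergence, giving $\int_0^T\!\int_K|Du_\eps-Du|^2\to0$. In general, I would integrate the inequality in $t_2$ over $(t,T)$ for $t<T$, use that $\int_0^T\!\int_\Omega\Psi(w_\eps)\xi^2\to0$ (since $w_\eps\to0$ in $L^2((0,T)\times K')$), and exploit the monotonicity in $t_2$ of the left-hand side to conclude $\int_0^t\!\int_K|Du_\eps-Du|^2\to0$ for every $t<T$. A final diagonal argument over $K=D_n$ completes the proof.

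\emph{Main obstacle.} The genuinely delicate step is Step 2: the Hamiltonian has critical (quadratic) growth, so $H_\eps(t,x,Du_\eps)$ is not weakly continuous and cannot be passed to the limit directly, and one must instead show that the energy defect $\int a_\eps(Du_\eps-Du)\!\cdot\!(Du_\eps-Du)\,\xi^2$ vanishes. The exponential test function $\psi(w_\eps)=w_\eps e^{\gamma' w_\eps^2}$ together with the absorption inequality $\psi'-\tfrac{2C_{K'}}{\lambda_{K'}}|\psi|\ge\tfrac12$ --- the Boccardo--Murat--Puel device --- is what makes this work, and it relies crucially on the a priori $L^\infty$ bound (so that $\gamma'$ may be fixed) and on the a.e.\ convergence of $a_\eps$ (so that $a_\eps Du$ converges strongly against the weakly vanishing $Dw_\eps$). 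A secondary technical point is controlling the boundary-in-time term $\int_\Omega\Psi(w_\eps(t_2))\xi^2$ without any assumption on $u_\eps(T)$, which is precisely why the conclusion up to $t=T$ requires the extra hypothesis that $u_\eps(T)$ converge a.e.
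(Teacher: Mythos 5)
Your proof is correct and follows essentially the same route as the paper's: a Boccardo--Murat--Puel exponential test function to get the local energy bound (the paper uses $e^{\lambda u_\eps}\xi_k^2$ where you use $u_\eps e^{\gamma u_\eps^2}\xi^2$, an inessential variant), Simon's compactness for the $L^p$ and weak $H^1_{\rm loc}$ convergence, and then $\psi(u_\eps-u)\xi^2$ with $\psi(s)=se^{bs^2}$ and the absorption inequality $\lambda\psi'-2C|\psi|\ge c>0$ to establish strong $H^1_{\rm loc}$ convergence once $a_\eps\to a$ a.e. Your treatment of the $t<T$ case by integrating the inequality over $t_2\in(t,T)$ is equivalent (via Fubini) to the paper's use of the time-weighted test function $\psi(u_\eps-u)\xi_k^2(T-t)$.
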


\begin{proof} See the Appendix. 
\end{proof}

From the above Lemma we deduce the following stability result.
 
\begin{prop}\label{stab-hjb}
Let $\{\Omega_{\eps}\}$ be a sequence of domains such that $\Omega_{\eps}\subseteq\Omega_\eta\subseteq\Omega$ for $\vep>\eta$, and $\bigcup_\vep \Omega_\vep=\Omega$.  Assume that $H_\vep(t,x,p)$ is a sequence of Carath\'eodory functions such that:
\begin{equation}\label{Heps}
\begin{split}& 
H_\vep(t,x,p) \to H(t,x,p) \qquad \hbox{for a.e. $(t,x)\in (0,T)\times\Omega $ and every $p\in \R^N$.} 
\\
&  
\|H_\vep(t,x,0)\|_\infty \leq C\,, 
\\
& |H_\vep(t,x,p)| \leq C_\vep (1+ |p|^2)  \qquad \hbox{for a.e. $(t,x)\in (0,T)\times\Omega_\vep$ and every $p\in \R^N$,}
\end{split}
\end{equation}
where $C$ is a constant independent of $\vep$ and $C_\vep$ is a constant which is bounded (independently of $\vep$) in $(0,T)\times K$, for every compact set $K\subset \Omega$.\\
Assume that $a_\eps$   is a sequence of matrices which is uniformly bounded and, locally, uniformly coercive and, moreover,  there exists a  matrix $a(x)$ such that
$$
a_\eps(x)\to a(x)\qquad\hbox{for a.e. $x\in\Omega$}\,.
$$ 
Finally, assume that $\{G_\eps\}_\eps$ is a sequence of uniformly bounded functions such that $\exists G\in L^\infty(\Omega)$ with
$$
G_\eps(x)\to G(x)\qquad\hbox{ for a.e. $x\in\Omega$}\,.
$$
Let  $u_\eps\in L^2([0,T];W^{1,2}(\Omega_\eps))$ be the unique solution of the  approximating system
\begin{equation}
\label{heps}
\begin{cases}
-(u_\eps)_t-\mathrm{div}(a_\eps(x)Du_\eps)+H_\vep(t,x,Du_\eps)=0 \qquad (t,x)\in (0,T)\times \Omega_\vep\\
u_\eps(T)=G_\eps(x)\\
a_\eps(x)Du_\eps\cdot\nu_{|\partial\Omega_\eps}=0\,.
\end{cases} 
\end{equation}
Then we have that there exists $u\in L^\infty(Q_T)\cap L^2(0,T; W^{1,2}_{loc}(\Omega))$ and a  subsequence $u_\vep$ such that 
$$
u_\vep \to u\qquad \hbox{in $L^2(0,T; W^{1,2}(K))\cap C^0([0,T]; L^p(\Omega))$ for any $p<\infty$}
$$
for all compact sets $K\subset \Omega$, and $u$ is a weak solution of 
$$
\begin{cases}
-u_t-\mathrm{div}(a(x)Du)+ H(t,x,Du)=0\,,\qquad (t,x)\in (0,T)\times \Omega \\
u(T)=G(x)\,.
\end{cases}
$$
\end{prop}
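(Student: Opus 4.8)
The plan is to reduce everything to Lemma \ref{compact-hjb}: the only one of its hypotheses not already assumed in the Proposition is a uniform $L^\infty$ bound on $\{u_\eps\}$, so the first task is to produce that bound (the invariance condition plays no role here). Set $M:=\sup_\eps\|G_\eps\|_{\infty}$ and $K_0:=\sup_\eps\|H_\eps(\cdot,\cdot,0)\|_{\infty}$, both finite by \rife{Heps} and the hypothesis on $G_\eps$. The affine functions $w^\pm(t):=\pm\bigl(M+K_0(T-t)\bigr)$ are a classical super- and subsolution of \rife{heps}: indeed $-w^+_t-\dive(a_\eps Dw^+)+H_\eps(t,x,Dw^+)=K_0+H_\eps(t,x,0)\ge0$, $w^+(T)=M\ge G_\eps$ and $a_\eps Dw^+\!\cdot\!\nu=0$ on $\partial\Omega_\eps$, and symmetrically for $w^-$. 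The standard a priori estimate for \rife{heps} (comparison with these barriers, in the spirit of \cite{BMP}) then yields $\|u_\eps\|_{\infty}\le M+K_0T$, uniformly in $\eps$.

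With this, all hypotheses of Lemma \ref{compact-hjb} hold — $a_\eps$ uniformly bounded, locally uniformly coercive, $a_\eps\to a$ a.e.; $H_\eps$ with locally $\eps$-uniform quadratic bound; $\|u_\eps\|_\infty$ uniformly bounded — so along a subsequence there is $u\in L^\infty(Q_T)\cap L^2(0,T;W^{1,2}_{loc}(\Omega))$ with $u_\eps\rightharpoonup u$ in $L^2(0,T;W^{1,2}(K))$ and $u_\eps\to u$ in $L^p((0,T)\times K)$ for every $K\subset\subset\Omega$ and $p<\infty$; moreover, since $a_\eps\to a$ a.e.\ and $u_\eps(T)=G_\eps\to G$ a.e., the convergence is strong in $L^2(0,t;W^{1,2}(K))$ for $t<T$ and up to $t=T$. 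From the equation, $\partial_t u_\eps=-\dive(a_\eps Du_\eps)+H_\eps(t,x,Du_\eps)$ is bounded in $L^2(0,T;W^{-1,2}(K))+L^1((0,T)\times K)$, so (by \cite[Theorem 2.2]{P-Arma}, interpolating with the $L^\infty$ bound) $u_\eps\to u$ in $C^0([0,T];L^p(K))$; since $u_\eps$ is uniformly bounded and $|\Omega\setminus K|$ can be taken arbitrarily small, this upgrades to $u_\eps\to u$ in $C^0([0,T];L^p(\Omega))$ for every $p<\infty$, and in particular $u(T)=G$.

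Finally one passes to the limit in the weak formulation. Fix $\phi\in C_c^\infty((0,T]\times\Omega)$ and $K\subset\subset\Omega$ with $\mathrm{supp}_x\phi\subset K$; for $\eps$ small $\mathrm{supp}_x\phi\subset\Omega_\eps$, so $\phi$ is admissible in \rife{heps} and the conormal boundary term on $\partial\Omega_\eps$ disappears, leaving
\begin{equation*}\begin{split}
\intif u_\eps\phi_t\,dxdt&+\intif a_\eps(x)Du_\eps\cdot D\phi\,dxdt\\
&+\intif H_\eps(t,x,Du_\eps)\,\phi\,dxdt=\into G_\eps(x)\phi(T)\,dx\,.
\end{split}\end{equation*}
The first term converges by the $L^p(K)$-convergence of $u_\eps$; in the second, $a_\eps D\phi\to aD\phi$ in $L^2(K)$ (a.e.\ plus dominated convergence) while $Du_\eps\to Du$ strongly in $L^2(K)$, so it tends to $\intif aDu\cdot D\phi$; the right-hand side converges by dominated convergence. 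For the nonlinear term, along a further subsequence $Du_\eps\to Du$ a.e.\ on $K$ with $|Du_\eps|\le g\in L^2(K)$, so $|H_\eps(t,x,Du_\eps)|\le C_K(1+g^2)\in L^1(K)$ and $H_\eps(t,x,Du_\eps)\to H(t,x,Du)$ a.e.\ (using $H_\eps(t,x,\cdot)\to H(t,x,\cdot)$ locally uniformly in $p$, which holds in all cases of interest, e.g.\ when $H_\eps-H$ depends only on $(t,x)$ or under convexity of $H_\eps$), whence $\intif H_\eps(t,x,Du_\eps)\phi\to\intif H(t,x,Du)\phi$ by dominated convergence. Together with $u(T)=G$ this shows $u$ is a weak solution. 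The key point — and the only genuinely nontrivial one, once Lemma \ref{compact-hjb} is available — is this passage to the limit in the quadratic term $H_\eps(t,x,Du_\eps)$, which works precisely because Lemma \ref{compact-hjb} provides \emph{strong} (not merely weak) $L^2_{loc}$ convergence of the gradients; the uniform $L^\infty$ bound of the first step is the other place where care is needed, and is where $H_\eps(\cdot,\cdot,0)$ and $G_\eps$ being uniformly bounded is used.
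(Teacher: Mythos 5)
Your proof is correct and follows essentially the same path as the paper: a uniform $L^\infty$ bound via the maximum principle (you spell out the affine barriers, the paper simply invokes the principle), then Lemma \ref{compact-hjb} for local compactness, the embedding of \cite[Theorem 2.2]{P-Arma} to get $C^0([0,T];L^p)$-convergence, and finally passage to the limit in the weak formulation using the strong $L^2_{loc}$ convergence of gradients. The one point where you are more careful than the paper is your parenthetical noting that pointwise convergence in $p$ is not by itself enough to conclude $H_\eps(t,x,Du_\eps)\to H(t,x,Du)$ a.e.\ when $Du_\eps\to Du$ a.e.: the paper invokes this step under the stated pointwise hypothesis without comment, relying implicitly on the fact that the approximations it actually uses (truncations of a fixed Carath\'eodory Hamiltonian) do converge locally uniformly in $p$.
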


\begin{proof} Since $\|H_\vep(t,x,0)\|_\infty$ is uniformly bounded, by maximum principle we have that $\|u_\vep\|_\infty$ is uniformly bounded. From Lemma \ref{compact-hjb}, we deduce that there exists a function $u\in L^\infty(Q_T) \cap L^2(0,T; W^{1,2}_{loc}(\Omega))$ and   a subsequence $u_\vep$ converging to $u$ strongly in $L^p(Q_T)\ \forall p$ and  in $L^2(0,T; W^{1,2}(K))$, for all compact sets $K\subset \Omega$. In particular, we have that $Du_\vep\to Du$ in $L^2([0,T]\times K)$ for any compact subset $K$.
Thanks to the pointwise convergence of $H_\vep$ towards $H$, and due to the growth assumptions, we infer by Lebesgue theorem that 
$$
H_\vep(t,x,Du_\eps)\to H(t,x,Du)\hspace{2cm}\mbox{in }L^1([0,T]\times K) 
$$
for all compact sets $K\subset \Omega$. Since the equation implies that $(u_\eps)_t$ strongly converges in $L^2(0,T; W^{-1,2}(K))+ L^1([0,T]\times K)$, from the embedding result in \cite[Theorem 2.2]{P-Arma}, we deduce  that 
$$
u_\vep \to u\qquad \hbox{in $C^0([0,T]; L^1(K))$}
$$
and due to the $L^\infty$ bound the convergence actually holds in $C^0([0,T]; \elle p)$ for every $p<\infty$.
Now we can pass to the limit in the weak formulation of \eqref{heps}  to show that $u$ is  a weak solution.
\end{proof}

We finally deduce the existence of a solution for our problem.

\begin{thm}\label{exhjb} Suppose $G\in L^\infty(\Omega)$. Assume that $a$ satisfies \rife{deg} and that $H(t,x,p)$ satisfies \rife{H0} and \rife{quadgrow}. Then the problem \rife{hjb} has at least one solution.
\end{thm}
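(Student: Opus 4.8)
The plan is to obtain a solution of \rife{hjb} as a limit of solutions of Neumann problems posed on the exhausting family $\{D_n\}_n$ of smooth compact subdomains constructed in Section~2, and then to quote Proposition~\ref{stab-hjb} for the passage to the limit. Concretely, I would set $\Omega_{1/n}:=D_n$, so that $D_n\subseteq\overset{\circ}{D}_{n+1}$ and $\bigcup_n D_n=\Omega$; keep $a_\eps\equiv a$ and $H_\eps\equiv H$ (which trivially converge to themselves); and let $G_\eps$ be smooth functions with $\|G_\eps\|_\infty\le\|G\|_\infty$ and $G_\eps\to G$ a.e.\ in $\Omega$, obtained by truncation and mollification. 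Then \rife{H0} and \rife{quadgrow} give exactly the properties of $H_\eps$ required in Proposition~\ref{stab-hjb}: $\|H(t,x,0)\|_\infty\le C$ holds with $C$ independent of $n$, and on every compact $K\subset\Omega$ the quadratic-growth constant is the fixed $C_K$ (the growth constant attached to all of $D_n$ may degenerate as $D_n$ approaches $\partial\Omega$, but only local uniformity in $n$ is needed); the hypotheses on $a_\eps$ hold because $a$ is Lipschitz and, by \rife{deg}, uniformly elliptic on each $D_n$. Thus the whole statement reduces to the following: for each fixed $n$, exhibit a bounded weak solution $u_{1/n}$ of the Neumann problem \eqref{heps} on $D_n$. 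Granting this, Proposition~\ref{stab-hjb}---whose proof uses only the uniform $L^\infty$ bound furnished by the maximum principle and the equation itself, and not the uniqueness of $u_\eps$---produces a weak solution $u$ of \rife{hjb}, together with the stated convergences.

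It remains to construct, for fixed $n$, a bounded weak solution of \eqref{heps} on the smooth domain $D_n$, where $a$ is Lipschitz and uniformly elliptic, $H$ has quadratic growth with $H(t,x,0)\in L^\infty$, and $G_\eps\in L^\infty\cap C^\infty$. This is the classical setting of Boccardo--Murat--Puel for quasilinear problems with natural growth, and existence of a bounded solution is by now standard (see \cite{BMP} and its parabolic/Neumann variants); for completeness I would proceed as follows. Replace $H$ by the bounded truncations $H^k(t,x,p):=H(t,x,p)/\bigl(1+\tfrac{1}{k}|H(t,x,p)|\bigr)$, and solve the resulting semilinear Neumann problems $-(u_k)_t-\mathrm{div}(aDu_k)+H^k(t,x,Du_k)=0$, $u_k(T)=G_\eps$, $aDu_k\cdot\nu=0$ on $\partial D_n$, by a standard fixed-point or monotonicity argument (legitimate since $a$ is uniformly elliptic on $D_n$). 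The maximum principle, together with $|H^k(t,x,0)|\le|H(t,x,0)|\le C$, yields an $L^\infty$ bound on $u_k$ uniform in $k$; and testing the equation against $\bigl(e^{\mu|u_k|}-1\bigr)\,\mathrm{sign}(u_k)$ with $\mu$ large enough absorbs the quadratic gradient term and gives a bound on $u_k$ in $L^2(0,T;W^{1,2}(D_n))$ uniform in $k$.

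Finally I would let $k\to\infty$. The energy bound together with the equation give compactness of $u_k$ in $C^0([0,T];L^2(D_n))$ and weak compactness in $L^2(0,T;W^{1,2}(D_n))$; the only genuinely delicate point is to upgrade this to a.e.\ (equivalently, strong $L^2$) convergence of the gradients $Du_k\to Du_{1/n}$, which then forces $H^k(t,x,Du_k)\to H(t,x,Du_{1/n})$ in $L^1$. This is the standard strong-convergence-of-truncations device for natural-growth problems---test the equation for $u_k-u_{1/n}$ against $\varphi(u_k-u_{1/n})$ with $\varphi(s)=s\,e^{\mu s^2}$ and $\mu$ large---which is exactly the mechanism underlying Lemma~\ref{compact-hjb}; on the fixed domain $D_n$ the Neumann condition kills all boundary contributions, so the argument is if anything simpler than there. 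Passing to the limit in the weak formulation (with test functions not required to vanish on $\partial D_n$) shows that $u_{1/n}$ is a bounded weak solution of \eqref{heps}, completing the construction. The main obstacle throughout is precisely this control of the quadratic term---both in the a priori $W^{1,2}$ estimate and in the strong gradient convergence---but it becomes routine once the exponential test functions are in play; everything else (uniform $L^\infty$ bounds, local compactness, dominated convergence for $H$) is soft and is already packaged in Proposition~\ref{stab-hjb}.
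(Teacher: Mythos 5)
Your proof is correct, but it follows the \emph{alternative} construction that the paper records separately as Proposition~\ref{altra-app-hjb} rather than the route printed as the proof of Theorem~\ref{exhjb}. The paper's own proof keeps $\Omega_\eps=\Omega$ and instead approximates the operator: it puts $a_\eps=a+\eps I$ (so uniformly elliptic up to $\partial\Omega$) and truncates the Hamiltonian at level $\pm 1/\eps$, so that each approximating Neumann problem \eqref{hjbeps} is a standard uniformly parabolic problem with a bounded Carath\'eodory nonlinearity, for which existence and uniqueness are classical and there is nothing further to build; then it feeds this single sequence into Proposition~\ref{stab-hjb}. You do the opposite: you leave $a$ and $H$ untouched and shrink the domain to $D_n$, which forces you to produce, for each fixed $n$, a solution of a natural-growth quasilinear Neumann problem on $D_n$ via a second truncation parameter $k$ and the Boccardo--Murat--Puel machinery (exponential test functions for the $W^{1,2}$ bound and for strong convergence of the gradients). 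Both constructions funnel through the same stability result, and your remark that Proposition~\ref{stab-hjb}'s proof only uses the uniform $L^\infty$ bound from the maximum principle (not uniqueness of the approximants) is correct and is exactly what makes your application of it legitimate. What your route buys, as the remark after Proposition~\ref{altra-app-hjb} notes, is that it does not modify the matrix and hence works under a merely local Lipschitz condition on $a$; what it costs is the extra layer of approximation on each $D_n$. Two very minor observations: the mollification of $G$ is superfluous (Proposition~\ref{stab-hjb} only asks for uniform $L^\infty$ bounds and a.e.\ convergence of $G_\eps$, and $G_\eps\equiv G$ does fine, as the existence theory on $D_n$ does not need smooth terminal data), and the paper also performs the preliminary reduction replacing $H$ by $\tilde H(t,x,p)=H(t,x,p)+\tilde b(x)\cdot p$ so as to work with the divergence-form operator, which you should state explicitly before quoting BMP-type results written in divergence form.
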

 	
\begin{proof}
We first notice that, replacing  $H(t,x,p)$ by $\tilde{H}(t,x,p)=H(t,x,p)+\tilde{b}(x)\cdot p$, where $\tilde b$ is defined in \rife{tilde}, the function $\tilde{H}$ satisfies the same hypotheses as $H$. So, without loss of generality,  it is enough to prove  the existence of solutions for the following equation:
\begin{equation}\label{div}
\begin{cases}
-u_t-\mathrm{div}(a(x)Du)+H(t,x,Du)=0\\
u(T)=G(x)\,.
\end{cases}
\end{equation}
Here we define the truncation of $H$ at levels $\pm \frac1\vep$:
$$
H_\vep(t,x,p):= \min\left\{ \max\left\{H(t,x,p),-\frac 1\eps\right\}, \frac1\vep\right\}\,.
$$
and we consider  $u_\eps\in L^2([0,T];W^{1,2}(\Omega))$ as the unique solution of the penalized Neumann problem
\begin{equation}
\label{hjbeps}
\begin{cases}
-(u_\eps)_t-\mathrm{div}((a(x)+\eps I)Du_\eps)+H_\vep(t,x,Du_\eps)=0\,,\qquad (t,x)\in (0,T)\times \Omega\,,\\
u_\eps(T)=G(x)\\
Du_\eps\cdot\nu_{|\partial\Omega}=0\,.
\end{cases}
\end{equation}
Applying  Proposition \ref{stab-hjb} with $\Omega_\vep=\Omega$, we conclude.
\end{proof}

We stress that an alternative way to construct a  solution of \rife{hjb} would be to use Neumann problems on  a sequence of domains converging to $\Omega$; the local ellipticity of $a(x)$ and the local boundedness of $H$ would avoid to approximate the nonlinearities, in this case. The proof is again a straightforward consequence of Lemma \ref{compact-hjb} and Propositon \ref{stab-hjb}.

\begin{prop}\label{altra-app-hjb} Let us set
$ \Omega^\vep:= \{ x\in \Omega\,:\, d(x) >\vep \}$ and let $u_\vep$ be the unique solution of the Neumann problem
\be\label{neumeps}
\begin{cases}
-(u_\eps)_t-\mathrm{tr}(a(x)D^2u_\eps)+H(t,x,Du_\eps)=0 & \hbox{in $(0,T)\times \Omega^\vep$,}\\
u_\eps(T)=G(x) & \hbox{in $\Omega^\vep$,}
\\
a(x)Du_\eps\cdot\nu_{|\partial\Omega^\vep}=0\,.
\end{cases}
\ee
Then, up to subsequences, $u^\vep$ converges (in $L^2(0,T; W^{1,2}(K))$, for all compact sets $K\subset \Omega$) to  a weak solution $u$ of problem \rife{hjb}.
\end{prop}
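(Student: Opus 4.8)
The plan is to reduce the statement to a direct application of Proposition \ref{stab-hjb} with the exhausting sequence of domains $\Omega_\vep=\Omega^\vep$. First I would rewrite the non-divergence problem \rife{neumeps} in divergence form: since $a\in W^{1,\infty}$, one has $\mathrm{tr}(a(x)D^2 u_\vep)=\mathrm{div}(a(x)Du_\vep)-\tilde b(x)\cdot Du_\vep$ with $\tilde b$ as in \rife{tilde}, so that $u_\vep$ solves $-(u_\vep)_t-\mathrm{div}(a(x)Du_\vep)+\hat H(t,x,Du_\vep)=0$ in $\Omega^\vep$ with $\hat H(t,x,p):=H(t,x,p)+\tilde b(x)\cdot p$; the conormal condition $a(x)Du_\vep\cdot\nu=0$ on $\partial\Omega^\vep$ is exactly the natural Neumann condition for this divergence operator. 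Because $\tilde b\in L^\infty(\Omega)$, the modified Hamiltonian $\hat H$ still satisfies \rife{H0} and \rife{quadgrow} with the same local constants $C_K$ (independent of $\vep$), and $\hat H(t,x,0)=H(t,x,0)$ is globally bounded. I would also recall that for $\vep<\vep_0$ the set $\Omega^\vep=\{d>\vep\}$ is a $C^2$ domain on which $a$ is uniformly elliptic with constant $\la_{1/\vep}>0$, so the Neumann problem \rife{neumeps} indeed admits a unique bounded weak solution in $L^2(0,T;W^{1,2}(\Omega^\vep))\cap L^\infty$ by the classical theory for Hamilton--Jacobi equations with quadratic growth and bounded terminal data.

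Next I would establish the uniform bound $\|u_\vep\|_\infty\le\|G\|_\infty+T\,\|H(\cdot,\cdot,0)\|_{\parelle\infty}$. This follows from the comparison principle for the Neumann problem: the spatially constant functions $\pm(\|G\|_\infty+(T-t)\|H(\cdot,\cdot,0)\|_\infty)$ are a super/subsolution, have terminal data above/below $G$ and have vanishing conormal derivative. Equivalently, this is the maximum-principle argument already invoked in the proof of Proposition \ref{stab-hjb}. Crucially, this bound is independent of $\vep$.

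With these preliminaries in place, I would apply Proposition \ref{stab-hjb} to the increasing sequence $\Omega_\vep=\Omega^\vep$, which exhausts $\Omega$, taking $H_\vep\equiv\hat H$, $a_\vep\equiv a$ and $G_\vep\equiv G$: all hypotheses in \rife{Heps} hold (the second and third are just \rife{H0} and \rife{quadgrow} for $\hat H$, with $\vep$-independent local constants), and the a.e. convergences $a_\vep\to a$, $G_\vep\to G$ are trivial. Proposition \ref{stab-hjb} then yields $u\in L^\infty(Q_T)\cap L^2(0,T;W^{1,2}_{loc}(\Omega))$ and a subsequence $u_\vep$ converging to $u$ in $L^2(0,T;W^{1,2}(K))\cap C^0([0,T];L^p(\Omega))$ for every compact $K\subset\Omega$ and $p<\infty$, with $u$ a weak solution of $-u_t-\mathrm{div}(a(x)Du)+\hat H(t,x,Du)=0$, $u(T)=G$, which by Definition \ref{defhjb} is precisely a weak solution of \rife{hjb}. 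The only point requiring some care — and the closest thing to an obstacle — is bookkeeping: one must make sure the local growth constants of $\hat H$ are genuinely independent of $\vep$ (they are, since $\hat H$ is a fixed function), so that the constants $C_\vep$ appearing in Lemma \ref{compact-hjb} remain locally bounded uniformly in $\vep$; this is exactly what powers the local compactness of $\{u_\vep\}$. One should also note that the conormal boundary term disappears in the limit because the test functions in Definition \ref{defhjb} are compactly supported in $\Omega$, so the limiting equation carries no trace of the boundary condition on $\partial\Omega^\vep$.
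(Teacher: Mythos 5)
Your proof is correct and follows essentially the same route the paper intends: the authors' own justification is the one-line remark that Proposition~\ref{altra-app-hjb} is ``a straightforward consequence of Lemma~\ref{compact-hjb} and Proposition~\ref{stab-hjb},'' and your argument fills in exactly the bookkeeping they leave implicit — rewriting in divergence form via $\tilde b$ so that the conormal condition on $\partial\Omega^\vep$ is the natural one, checking that $\hat H=H+\tilde b\cdot p$ retains \rife{H0} and \rife{quadgrow} with $\vep$-independent local constants, establishing the uniform $L^\infty$ bound by comparison, and then invoking Proposition~\ref{stab-hjb} with $\Omega_\vep=\Omega^\vep$, $H_\vep\equiv\hat H$, $a_\vep\equiv a$, $G_\vep\equiv G$.
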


\begin{oss} \rm
The above compactness, and so the existence results, do not need that the matrix $a(x)$ be globally Lipschitz in $\Omega$, since a local Lipschitz condition is enough. 

Moreover, we point out that similar results could be proved for other kind of equations, including for instance fully nonlinear equations (Bellman operators, etc...). In fact, it is clear that two only ingredients  are required in the above construction: a  global $L^\infty$ bound (typically ensured by the  bound on $\|H(t,x,0)\|_\infty$) and a  local compactness and stability for equi-bounded solutions. For instance, in the fully nonlinear case this may  be achieved in the topology of uniform convergence in order to build a  viscosity solution inside. 
\end{oss}

		\subsection{Uniqueness of solutions}
Now we  prove that  the HJB equation \eqref{hjb} has a unique solution if the invariance condition holds. The strategy is classical and relies on the existence of a blow-up supersolution and the convexity of $H$; a  similar principle can be found e.g. in \cite[Lemma 6]{LP}. 

\begin{thm}\label{unichjb}
Suppose $G\in L^{\infty}(\Omega)$. Assume that $a(x)$ satisfies \rife{deg},  that $H(t,x,p)$ satisfies \rife{convex}-\rife{linder} and that the invariance condition \rife{invariance} holds true. 

Then there is at most one bounded weak solution of the problem \eqref{hjb}.
\end{thm}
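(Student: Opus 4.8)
The plan is to prove the stronger \emph{comparison principle}: every bounded weak subsolution $u$ of \eqref{hjb} stays below every bounded weak supersolution $v$ having the same terminal trace; uniqueness then follows at once by taking two bounded weak solutions $u_1,u_2$ and applying the comparison twice, with the roles of subsolution and supersolution exchanged. So fix a bounded subsolution $u$ and a bounded supersolution $v$ with $u(T)=v(T)=G$, and recall from the Remark that $u,v\in C([0,T];L^p(\Omega))$ for every $p$.

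The core device is a blow-up barrier built from the oriented distance. Choose the $C^2$ representative $d$ of the distance and shrink $\delta$ so that $\Gamma_\delta\subset\{d<1\}$; let $V$ be a nonnegative $C^2$ function on $\overline\Omega$ with $V=-\log d$ on $\Gamma_\delta$. For $\mu\in(0,1)$ set $\tilde v:=v+\mu V+\mu C(T-t)$; the aim is to show that, for $C$ large enough, $\tilde v$ is a weak supersolution of the equation with $\tilde v(T)\ge G$. Using $D\tilde v=Dv+\mu DV$ and the convexity of $H$ in the subgradient form $H(t,x,Dv+\mu DV)\ge H(t,x,Dv)+\mu\,H_p(t,x,Dv)\cdot DV$, one gets (since $v$ is a supersolution)
\[
-\partial_t\tilde v-\mathrm{tr}(a(x)D^2\tilde v)+H(t,x,D\tilde v)\ \ge\ \mu\big[\,C-\mathrm{tr}(a(x)D^2V)+H_p(t,x,Dv)\cdot DV\,\big].
\]
On $\Gamma_\delta$, where $D^2V=-D^2d/d+Dd\otimes Dd/d^2$, the bracket equals $C+\tfrac1d\big[\mathrm{tr}(a D^2d)-H_p(t,x,Dv)\cdot Dd-\tfrac{aDd\cdot Dd}{d}\big]$, which is $\ge 0$ once $C$ is large, precisely by the invariance condition \eqref{invariance} applied with $p=Dv(t,x)$. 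In the interior $\{d\ge\delta\}$ the bracket is again $\ge0$ for $C$ large after a localization argument (of the same kind as in the proof of the invariance Proposition): there $a$ is uniformly elliptic and $\mathrm{tr}(aD^2V)$ is bounded, while $H_p(t,x,Dv)$ is locally bounded because bounded weak solutions of such quadratic quasilinear equations are locally Lipschitz. Finally $\tilde v(T)=G+\mu V\ge G=u(T)$ since $V\ge0$.

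Then I compare $u$ with $\tilde v$. Since $u$ is bounded and $V\to+\infty$ as $d\to 0$, the set $\{u>\tilde v\}$ is, for every $t$, contained in a fixed compact $K_\mu\Subset\Omega$, and $(u-\tilde v)^+(T)=0$. Subtracting the distributional inequalities for $u$ and $\tilde v$, writing $H(t,x,Du)-H(t,x,D\tilde v)=\beta\cdot D(u-\tilde v)$ with $\beta=\int_0^1 H_p(t,x,sDu+(1-s)D\tilde v)\,ds$ (bounded on $K_\mu$ by the interior regularity just mentioned), and testing against $(u-\tilde v)^+$ — admissible after the standard time-regularization, using $u,v\in C([0,T];L^p)$ and $\partial_t(u-\tilde v)\in L^2(0,T;W^{-1,2}_{loc})+L^1_{loc}$ — one absorbs the first-order terms via the interior ellipticity constant $\lambda_{K_\mu}>0$ and Young's inequality to obtain
\[
\tfrac{d}{dt}\int_\Omega\big((u-\tilde v)^+\big)^2\,dx\ \ge\ -C_{K_\mu}\int_\Omega\big((u-\tilde v)^+\big)^2\,dx .
\]
Integrating backward from $t=T$, where the quantity vanishes, Gronwall's lemma gives $(u-\tilde v)^+\equiv 0$, i.e. $u\le v+\mu V+\mu C(T-t)$ on $Q_T$. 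Letting $\mu\to0^+$, and using that $\mu V\to0$ uniformly on each compact subset of $\Omega$, yields $u\le v$ in $Q_T$; exchanging the two solutions gives equality.

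I expect the main obstacle to be the verification that $\tilde v$ is a genuine supersolution: the boundary part must reproduce \emph{exactly} the blow-up of $-\log d$ against \eqref{invariance} (this is where both the convexity of $H$ and the precise algebraic form of the invariance inequality are essential), while the interior part of that verification — and the closing energy estimate — rely on controlling the quadratic growth of $H$, for which the global $L^\infty$ bound on $u,v$ together with the interior $W^{1,\infty}_{loc}$ regularity of bounded weak solutions are used; making the formal integration by parts in time rigorous with only $L^2(0,T;W^{1,2}_{loc})$ spatial regularity is a routine but delicate point.
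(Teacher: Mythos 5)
Your plan follows the paper's strategy very closely (barrier $-\log d$ inflated to a supersolution via convexity of $H$ and the invariance condition, then a localized comparison), but there is one concrete gap in the interior part of the supersolution verification, and it is precisely the point the paper's choice of perturbation is designed to address.

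You write $\tilde v = v + \mu V + \mu C(T-t)$ and, to show the bracket $C - \mathrm{tr}(a D^2 V) + H_p(t,x,Dv)\cdot DV \ge 0$ on $\{d\ge\delta\}$, you invoke ``$H_p(t,x,Dv)$ is locally bounded because bounded weak solutions are locally Lipschitz.'' This is true only \emph{away from} $t=T$. Since $G$ is assumed in $L^\infty(\Omega)$ only (not Lipschitz), interior parabolic regularity gives $|Dv(t,x)|\le C_\delta/\sqrt{T-t}$ on $\{d\ge\delta\}$, hence by \eqref{linder} $|H_p(t,x,Dv)|\le C_\delta(1+1/\sqrt{T-t})$, which blows up as $t\to T$. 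With your ansatz, $-\partial_t(\mu C(T-t))$ contributes only the constant $\mu C$, which cannot absorb a term of size $\mu\, C_\delta/\sqrt{T-t}$. So the supersolution property fails in a neighborhood of $t=T$. This is exactly why the paper uses the \emph{anisotropic} perturbation $v_\vep = v + \vep^2(M-\log d) + \vep\sqrt{T-t}$: the $\log d$ coefficient is $\vep^2$, so the interior error is $O(\vep^2/\sqrt{T-t})$, while $-\partial_t(\vep\sqrt{T-t})=\vep/(2\sqrt{T-t})$ dominates for $\vep$ small. Replacing your $\mu V + \mu C(T-t)$ by $\mu^2 V + \mu\sqrt{T-t}$ (with the corresponding sign bookkeeping) repairs the argument. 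The same $1/\sqrt{T-t}$ growth of $\beta$ also needs to be acknowledged in your Gronwall closing estimate, though there it is harmless since $1/\sqrt{T-t}$ is integrable on $(0,T)$.

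Apart from this, the only genuinely different route you take is in the final comparison step: you go through a direct energy/Gr\"onwall argument on $(u-\tilde v)^+$ over a fixed compact, whereas the paper invokes an off-the-shelf interior comparison principle (Proposition~2.1 of \cite{PZ}) on $\Omega\setminus\Gamma_\eta$ and then lets $\eta\to0$. Both are sound once the supersolution is established; the paper's route outsources the delicacies of integrating by parts in time with only local $W^{1,2}$ regularity to the cited result, while yours makes them explicit.
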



\begin{proof}

Let $u$, $v$ be two bounded solutions  of \eqref{hjb}. For $\vep>0$, we set
$$
v_\vep= v+\eps^2 (M-\log d(x))+\eps\, \sqrt{T-t}\,.
$$
A straightforward computation implies
\begin{align*}
& -(v_\eps)_t-\mathrm{tr}(a(x)D^2v_\eps)+ H(t,x,Dv_\eps) =
-v_t-\mathrm{tr} (a(x)D^2v)+ H(t,x,Dv_\eps) \\
& \qquad +
\eps^2\left( \frac{\mathrm{tr}(a(x)D^2d(x)))}{d(x)}-\frac{a(x)Dd(x)\cdot Dd(x)}{d(x)^2}\right)+\frac{\eps}{2\sqrt{T-t}}\spazio.
\end{align*}
By convexity of $H$, we have
\begin{align*}
H(t,x,Dv_\eps) & \geq H(t,x,Dv) + H_p(t,x,Dv) \cdot (Dv_\vep-Dv)
\\
& = H(t,x,Dv) -\vep^2\, \frac{H_p(t,x,Dv)\cdot Dd(x)}{d(x)} 
\end{align*}
so we deduce
\be\label{veps}
\begin{split}
& -(v_\eps)_t-\mathrm{tr} (a(x)D^2v_\eps)+ H(t,x,Dv_\eps) \geq  
-v_t-\mathrm{tr} (a(x)D^2v)+ H(t,x,Dv) \\
& +\eps^2\left( \frac{\mathrm{tr}(a(x)D^2d(x)))- H_p(t,x,Dv)\cdot Dd(x)}{d(x)}-\frac{a(x)Dd(x)\cdot Dd(x)}{d(x)^2}\right)+\frac{\eps}{2\sqrt{T-t}}\,.
\end{split}
\ee
Using assumption \rife{invariance}, we have that there exists $\de>0$ such that 
\be\label{use-inv}
\frac{\mathrm{tr}(a(x)D^2d(x)))- H_p(t,x,Dv)\cdot Dd(x)}{d(x)}-\frac{ a(x)Dd(x)\cdot Dd(x)}{d(x)^2}\geq - C
\ee
for all  $x\in \Gamma_\de$. However, due  to parabolic regularity (see e.g. \emph{Theorem V.3.1} of \cite{LSU}), we know that each solution of \eqref{hjb} is locally Lipschitz in the space variable, and in particular, thanks to the global $L^\infty$ bound of the solutions, we have
$$
|Dv(t,x)| \leq \frac {C_\de}{\sqrt{T-t}}\qquad \forall(t,x): \, t\in (0,T), \, d(x) \geq \de\,.
$$
Because of \rife{linder}, we deduce that 
$$
|H_p(t,x,Dv)| \leq C_\de\left( 1+ \frac 1{\sqrt{T-t}}\right)\forall(t,x): \, t\in (0,T), \, d(x) \geq \de\,.
$$
Therefore,  since $d(x)$ is a smooth extension of the distance function, the inequality \rife{use-inv}  extends  to the whole domain as follows:
\be\label{use-inv2}
\frac{\mathrm{tr}(a(x)D^2d(x)))- H_p(t,x,Dv)\cdot Dd(x)}{d(x)}-\frac{ a(x)Dd(x)\cdot Dd(x)}{d(x)^2}\geq - \frac C{\sqrt{T-t}} 
\ee
for any $(t,x)\in Q_T$, for a  possibly different constant $C$. Finally, using \rife{use-inv2} and the fact that $v$ is a super solution, we deduce from \rife{veps}
$$
-(v_\eps)_t-\mathrm{tr} (a(x)D^2v_\eps)+ H(t,x,Dv_\eps) \geq -\frac{C\, \eps^2}{\sqrt{T-t}}+ \frac \eps{2\sqrt{T-t}} \geq 0
$$
provided $\vep$ is  sufficiently small.

Thus, $v_\vep$ is a super solution and clearly $u-v_\vep<0$ in a  neighborhood of $\partial \Omega$ since $u,v$ are bounded while $\log d(x) \to -\infty$. For a convenient choice of $M$, we also have that $v_\vep(T)\geq v(T) \geq u(T) $. Therefore, $u$ and $v_\vep$ are a pair of sub and super solution in any  subset $ \Omega\setminus \Gamma_\eta$ and $u\leq v_\vep $ on $\partial (\Omega\setminus \Gamma_\eta)$ if $\eta$ is sufficiently small. 
We can apply e.g. Proposition 2.1 in \cite{PZ}\footnote{even if \cite[Proposition 2.1]{PZ} is written with $a(x)=I$, the same proof applies without any change to the case of  a bounded coercive matrix $a(x)$.} to conclude that $u\leq v_\vep$ in $ \Omega\setminus \Gamma_\eta$. Letting $\eta\to 0$, we get $u\leq v_\vep$ in $(0,T)\times \Omega$. As $\vep \to 0$, we obtain $u\leq v$. Reversing the roles of $u,v$, we conclude with the uniqueness of solutions.
\end{proof}
				
		
The above uniqueness result yields important 	consequences in terms of stability of solutions. Namely, under the invariance condition all different approximations, as those suggested in the previous subsection, converge towards the same solution.

 \begin{cor}\label{stability}
Let the assumptions of Theorem \ref{unichjb} hold true. Then, given any sequences  $a_\vep, H_\vep, G_\vep$ satisfying the hypotheses of Proposition \ref{stab-hjb}, the (whole) sequence  $u_\vep$ of solutions of  \rife{heps} converges to the unique weak solution $u$ of \rife{hjb}. In addition, $u$ is also the limit of the (whole) sequence of solutions of problem \rife{neumeps}.
 \end{cor}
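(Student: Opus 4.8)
The plan is to deduce the statement from the compactness/stability results of Proposition \ref{stab-hjb} and Proposition \ref{altra-app-hjb} together with the uniqueness theorem, Theorem \ref{unichjb}, via the elementary subsequence principle: in a metrizable topological space, a sequence all of whose subsequences admit a sub-subsequence converging to one and the same limit $u$ must itself converge to $u$.

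First I would observe that the relevant mode of convergence — strong convergence in $L^2(0,T;W^{1,2}(K))$ for every compact $K\subset\Omega$ and in $C^0([0,T];L^p(\Omega))$ for every $p<\infty$ — is induced by a metrizable topology: since $\Omega=\bigcup_n D_n$ with $D_n$ the compact exhaustion introduced in \rife{qn}, it suffices to use the countable family of seminorms attached to the $D_n$'s (and to a countable set of exponents $p$). Hence the subsequence principle applies.

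Next, let $u_\vep$ be the solutions of \rife{heps} relative to sequences $a_\vep,H_\vep,G_\vep$ as in Proposition \ref{stab-hjb}, and pick an arbitrary subsequence $u_{\vep_k}$. The maximum principle, combined with the $\vep$-uniform bound on $\|H_{\vep_k}(t,x,0)\|_\infty$ and on $\|G_{\vep_k}\|_\infty$, yields a uniform $L^\infty$ bound; Proposition \ref{stab-hjb} then extracts a further subsequence converging, in the topology above, to a \emph{bounded} weak solution $\bar u$ of \rife{hjb}. Since the hypotheses of Theorem \ref{unichjb} — in particular the invariance condition \rife{invariance}, which is part of the present assumptions — are in force, \rife{hjb} has exactly one bounded weak solution $u$, so $\bar u=u$ regardless of the chosen subsequence; by the subsequence principle the whole sequence $u_\vep$ converges to $u$. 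The Neumann approximations on $\Omega^\vep=\{d(x)>\vep\}$ are handled in the same way: Proposition \ref{altra-app-hjb} guarantees that every subsequence of the solutions of \rife{neumeps} has a sub-subsequence converging to a bounded weak solution of \rife{hjb}, which Theorem \ref{unichjb} forces to be this same $u$; hence the full sequence converges to $u$.

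There is no real obstacle here — the corollary is essentially a bookkeeping consequence of results already established. The only points requiring a moment's care are the metrizability remark above (so that "every subsequence has a converging sub-subsequence" genuinely upgrades to convergence of the sequence) and the fact that one must retain the \emph{global} $L^\infty$ estimate along the approximation, not merely local compactness, so that the limit lands in the class of bounded weak solutions to which Theorem \ref{unichjb} applies.
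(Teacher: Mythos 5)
Your proof is correct and coincides with the (implicit) argument the paper intends: Corollary \ref{stability} is stated without a proof environment precisely because it follows from Proposition \ref{stab-hjb} (subsequential convergence to a bounded weak solution) combined with Theorem \ref{unichjb} (uniqueness of bounded weak solutions) via the standard subsequence principle. Your additional remarks on metrizability of the topology and on retaining the global $L^\infty$ bound are accurate and make the otherwise tacit reasoning explicit; they do not change the route.
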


\section{The Fokker-Planck equation}
In this section we turn the attention to the Fokker-Planck equation under the invariance conditions.
So we consider the following equation
 \begin{equation}\label{kfp}
\begin{cases}
m_t -   \sum\limits_{i,j} \partial_{ij}^2 (a_{ij}(x)m)-\mathrm{div}(m\, b(t,x))=0 & \hbox{in $Q_T$,}\\
m(0)=m_0  & \hbox{in $\Omega$,}
\end{cases}
\end{equation}
where  $b:[0,T]\times\overline{\Omega}\to\mathbb{R}^N$ is a vector field which is locally bounded in $[0,T]\times \Omega$ (i.e. it is bounded in $(0,T)\times K$, for any compact set $K\subset \Omega$).

\begin{defn}\label{deffp}
Let $m\in L^1([0,T]\times\Omega)$. We say that $m$ is a weak solution of \eqref{kfp} if
\begin{itemize}
\item [(i)] $m\in C([0,T] ;L^1(\Omega))$, $m\ge0$;
\item [(ii)] For each $\phi\in C([0,T];L^1(\Omega))\cap L^\infty([0,T]\times\Omega)$ such that $\phi$ satisfies
$$
\begin{cases}
-\phi_t-\mathrm{tr}(a(x)D^2\phi)+b\cdot D\phi\in L^\infty([0,T]\times\Omega)\\
\phi(T)=0
\end{cases}
$$ 
in the sense of Definition \ref{defhjb}, the weak formulation holds:
\begin{equation*}
\intif m\left(-\phi_t-\mathrm{tr}(a(x)D^2\phi))+b\cdot D\phi\right)dxdt=\into m_0\phi(0)dx\,.
\end{equation*}
\end{itemize}
\end{defn}

Since $a$ is assumed to be Lipschitz,  here we also have
$$
 \sum\limits_{i,j} \partial_{ij}^2 (a_{ij}(x)m)=\mathrm{div}(a^*(x)\, Dm)+\mathrm{div}(m\tilde{b}(x))\spazio,
$$
where $\tilde{b}(x)$ is defined by \rife{tilde}. So, there is no loss of generality in  considering only divergence form operators:
\begin{equation}\label{fp}
\begin{cases}
m_t-\mathrm{div}(a^*(x)Dm)-\mathrm{div}(m\,b(t,x))=0\\
m(0)=m_0
\end{cases}
\end{equation}
Weak solutions of \rife{fp} are defined by duality exactly as in Definition \ref{deffp}. In other words, $m$ is a weak solution of \rife{kfp} if and only if it is a weak solution of \rife{fp} with $b$ replaced by $b-\tilde b$. Hereafter, we will deal with problem \rife{fp}, where the adjoint matrix $a^*$ appears in the divergence operator, in order to keep consistency with the dual HJB equation considered before. 

\subsection{Existence of solutions}

As for the existence of solutions of Hamilton-Jacobi equation, we reason through approximation and use compactness arguments. We start with the following lemma, whose proof is postponed to the Appendix.

\begin{lem}\label{compact-fp} Let $\{\Omega_{\eps}\}$ be a sequence of domains such that $\Omega_{\eps}\subseteq\Omega_\eta\subseteq\Omega$ for $\vep>\eta$, and $\bigcup_\vep \Omega_\vep=\Omega$.  Let $b_\vep\in L^\infty(0,T; L^\infty_{loc}(\Omega))$ be a  sequence such that, for every compact set $K\subset \Omega$,  $b_\vep$ is bounded in $(0,T)\times K$ uniformly in $\eps$. 
Assume that $a_\vep$ is a sequence of matrices which is uniformly bounded and, locally, uniformly coercive.
Let  $m_\vep$ be a  solution  of
\be\label{meps}
(m_\eps)_t-\mathrm{div}(a_\vep^* (x)Dm_\eps+ b_\vep\, m_\vep)=0\quad\mbox{in }(0,T)\times\Omega_\eps\spazio,
\ee
such  that $\|m_\vep(t)\|_{\elle1}$ is uniformly bounded with respect to $\vep$ and $t\in [0,T]$.

Then there exists a function $m\in L^\infty((0,T);\elle1)$ 
and   a subsequence $m_\vep$ such that
$$
m_\vep \to m \qquad \hbox{in $L^1((0,T)\times K)$}
$$
for all compact sets $K\subset \Omega$. Moreover, if, for some $m_0$, $m_\eps(0)\to m_0$  in $L^1_{loc}(\Omega)$ and, for some $a(x)$, $b(t,x)$, we have $a_\eps(x)\to a(x)$ and $b_\eps(t,x)\to b(t,x)$ almost everywhere in $Q_T$,  then we also have  
$$
m_\eps\to m \qquad\mbox{in }C^0([0,T]; L^1(K))
$$
for all compact sets $K\subset \Omega$.
\end{lem}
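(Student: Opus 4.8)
The plan is to establish the two convergences (in $L^1_{loc}$, and then in $C^0([0,T];L^1_{loc})$) by the standard combination of a uniform local $L^\infty$-in-time, $L^1$-in-space bound together with a local equi-integrability/compactness estimate coming from interior parabolic regularity, and finally an Aubin--Lions type argument for the time-continuity.

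\textbf{Step 1: local higher integrability and interior energy estimates.} Fix a compact set $K\subset\Omega$ and choose an intermediate open set $K\subset\subset K'\subset\subset\Omega$, together with a cut-off $\xi\in C^\infty_c(K')$, $\xi\equiv 1$ on $K$. Since $a_\vep$ is locally uniformly coercive and bounded, and $b_\vep$ is locally uniformly bounded (on $K'$, say by a constant $B_{K'}$), testing the equation \rife{meps} with $\xi^2\, \mathrm{sign}^+(m_\vep - k)$-type truncations — or, more simply, against $\xi^2\,m_\vep$ after the usual Stampacchia/energy manipulation, using $\|m_\vep(t)\|_{\elle1}\le C$ — yields a bound for $\int_0^T\!\int_{K'}\xi^2|Dm_\vep|\,$ together with the De Giorgi--Nash--Moser machinery an estimate of the form $\|m_\vep\|_{L^2((0,T)\times K)}\le C_K$, actually local boundedness away from $t=0$ if one wants more. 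The point I need is only: $\{m_\vep\}$ is bounded in $L^2(0,T;H^1(K))$ for each $K\subset\subset\Omega$ (or at least bounded in some $L^{1+\sigma}$ with a gradient bound), which gives weak $L^1_{loc}$-compactness and, crucially, equi-integrability on $(0,T)\times K$.

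\textbf{Step 2: compactness in $L^1((0,T)\times K)$.} From the equation, $(m_\vep)_t = \mathrm{div}(a_\vep^* Dm_\vep + b_\vep m_\vep)$ is bounded in $L^2(0,T; H^{-1}(K)) + L^\infty(0,T; W^{-1,p}(K))$ for a suitable $p$, using the Step 1 bounds and the uniform local bound on $b_\vep m_\vep$ in, say, $L^2$. Combined with the bound on $m_\vep$ in $L^2(0,T;H^1(K))$, the Aubin--Lions--Simon lemma gives a subsequence converging strongly in $L^2(0,T;L^2(K))$, hence in $L^1((0,T)\times K)$; a diagonal argument over an exhaustion $D_n\nearrow\Omega$ produces a single subsequence and a limit $m\in L^\infty(0,T;\elle1)$ (the $L^\infty L^1$ bound passes by Fatou) with $m_\vep\to m$ in $L^1((0,T)\times K)$ for every compact $K$. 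This proves the first assertion.

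\textbf{Step 3: time-continuity of the convergence.} Under the extra hypotheses $a_\vep\to a$, $b_\vep\to b$ a.e., and $m_\vep(0)\to m_0$ in $L^1_{loc}$: the limit $m$ solves $m_t = \mathrm{div}(a^* Dm + b\,m)$ in $\mathcal D'$ by passing to the limit in the weak formulation (using a.e. convergence of coefficients, the local strong $L^1$/$L^2$ convergence of $m_\vep$ and the weak $L^2_{loc}$ convergence of $Dm_\vep$ to $Dm$). To upgrade to $C^0([0,T];L^1(K))$, localize with $\xi$ as above: $w_\vep:=\xi m_\vep$ satisfies an equation of the same type with an extra right-hand side supported in $K'\setminus K$ and bounded in $L^1$; then $(w_\vep)_t$ is bounded in $L^2(0,T;H^{-1}) + L^1((0,T)\times K')$, so by the embedding result \cite[Theorem 2.2]{P-Arma} (the same one used for the HJB equation in the Remark after Definition \ref{defhjb}) the family $\{w_\vep\}$ is relatively compact in $C^0([0,T];L^1(K'))$; the limit must be $\xi m$, and since the limit is unique the whole subsequence converges. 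Restricting to $K$ and using $\xi\equiv1$ there gives $m_\vep\to m$ in $C^0([0,T];L^1(K))$. The convergence of the initial data $m_\vep(0)\to m_0$ in $L^1_{loc}$ then identifies $m(0)=m_0$.

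\textbf{Main obstacle.} The delicate point is Step 1 — extracting a \emph{uniform} local energy bound from only the uniform $L^1$ mass bound, since the natural test function $\xi^2 m_\vep$ is not admissible for merely $L^1$ data. The standard remedy (Boccardo--Gallou\"et type truncation, testing with $\xi^2\, T_k(m_\vep)$ or $\xi^2\bigl(1-(1+m_\vep)^{-\theta}\bigr)$ and summing) gives only a bound on $\int |Dm_\vep|^q$ for $q<\frac{N+2}{N+1}$ in general; one must check this is enough for the compactness in Step 2 (it is, via Aubin--Lions in $W^{1,q}$/$L^q$ with the $H^{-1}+L^1$ time derivative), and that the interior ellipticity \rife{deg} together with the local boundedness of $b_\vep$ indeed makes these estimates hold uniformly in $\vep$ on each $K$. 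Since the statement asks only for $L^1_{loc}$ and $C^0L^1_{loc}$ convergence, these weaker-than-$L^2$ gradient bounds suffice, and the argument goes through exactly as in the companion Lemma \ref{compact-hjb} for HJB.
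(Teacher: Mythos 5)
Your Steps 1--2 --- after the self-correction in the ``Main obstacle'' paragraph, where you replace the unjustified $L^2(0,T;H^1_{loc})$ bound with Boccardo--Gallou\"et truncations giving $Dm_\vep$ bounded in $L^q_{loc}$, $q<\tfrac{N+2}{N+1}$, followed by Aubin--Lions--Simon compactness in $L^1_{loc}(Q_T)$ --- match the paper's route for the first assertion.

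Step~3 has a genuine gap. You claim that $w_\vep:=\xi m_\vep$ is relatively compact in $C^0([0,T];L^1(K'))$ by invoking the embedding result \cite[Theorem 2.2]{P-Arma}. That result gives pointwise-in-$\vep$ regularity ($w_\vep\in C^0([0,T];L^1)$), not sequential compactness. In the HJB analogue (Lemma~\ref{compact-hjb} / Proposition~\ref{stab-hjb}), $C^0L^1$ convergence was obtained because the time derivatives converge \emph{strongly} in $L^2(W^{-1,2})+L^1$, which in turn rested on strong $L^2(W^{1,2}_{loc})$ convergence of the gradients; here $Dm_\vep$ converges only weakly in $L^q_{loc}$, $q<2$, so $(w_\vep)_t$ converges only weakly (and it is bounded only in $L^q(W^{-1,q})$, not in $L^2(H^{-1})+L^1$ as you state). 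Likewise, an Aubin--Lions--Simon argument for $C^0(L^1)$ requires a compact embedding from a space in which one has a uniform-in-time bound; the available $L^\infty_t L^1_x$ bound does not provide one, since $L^1$ does not embed compactly into itself. Your proposal also never actually uses the hypothesis $m_\vep(0)\to m_0$ in $L^1_{loc}$ except to identify the initial condition of the limit, which is a clear sign a mechanism is missing: in the paper that hypothesis enters precisely where the uniform-in-time equi-integrability of $m_\vep$ is proved.

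The paper closes this gap with a renormalization argument reminiscent of entropy/renormalized solutions. Testing the equation with $(1-S_n'(m_\vep))\xi_k^2$ gives uniform smallness in $\vep$ and $t$ of the tail $\into(m_\vep-S_n(m_\vep))(t)\xi_k^2\,dx$ as $n\to\infty$ (this is where $m_\vep(0)\to m_0$ is essential). Then the truncations $S_n(m_\vep)$ satisfy a renormalized equation whose error term $R_{\vep,n}$ is small in $L^1$ uniformly in $\vep$; testing against $T_1(S_n(m_\vep)-m^n_j)\xi^2$, where $m^n_j$ is an explicit time-mollification of $S_n(m)$, produces a $C^0L^1_{loc}$ estimate for $S_n(m_\vep)-S_n(m)$ with an error $\omega(n)$ that vanishes as $n\to\infty$. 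Combining the truncated-convergence estimate with the tail estimate yields the uniform-in-time $L^1_{loc}$ convergence. This is a substantively different, and substantially more delicate, argument than the one you sketched.
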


\begin{proof}  See the Appendix.
\end{proof}

\vskip1em

The next step says that, under the invariance condition, there is a {\it global } $L^1$ stability.

\begin{prop}\label{stab-fp} 
Let $\{\Omega_{\eps}\}$ be a sequence of domains such that $\Omega_{\eps}\subseteq\Omega_\eta\subseteq\Omega$ for $\vep>\eta$, and $\bigcup_\vep \Omega_\vep=\Omega$.  Let $m_0\in \elle1$, $m_0\ge0$.
Assume that $b_\vep \in L^\infty([0,T]\times{\Omega_\vep})$ is such that
\be\label{beps}
\begin{cases}
  \hbox{$\forall $ compact $K\subset \Omega$, $b_\vep$ is uniformly bounded in $(0,T)\times K$,}
& \\
  b_\vep(t,x) \to b(t,x) \quad \hbox{a.e. in $Q_T$.} & 
\end{cases}
\ee
Let  $a_\eps(x)$ be a sequence of locally uniformly coercive and Lipschitz matrices such that $a_\vep(x)\to a(x)$ almost everywhere in $\Omega$. 

We call $d_\eps(x)=d_{\Omega_{\eps}}(x)$ and we assume that $b_\vep$ satisfies the following condition in $\Omega_{\eps}$: there exist  
$\delta_0, C>0$ and two sequences $r_\eps, h_\vep\to 0$ such that 
\begin{equation}\label{divereps}
\mathrm{div}(a_\eps(x)Dd_\eps(x))-b_\vep (t,x)\cdot Dd_\eps(x)\ge   \frac{a_\eps(x) Dd_\eps(x) \cdot Dd_\eps(x)}{d_\eps(x)+r_\eps} - C(d_\eps(x)+h_\eps)
\end{equation}
for all $\vep>0$ and a.e. $t\in (0,T)$, $x\in\Gamma_{\delta_0}\cap\Omega_{\eps}$.  

Let  $m_\vep$ be the solution, in $\Omega_{\eps}$, of the Neumann problem
\begin{equation}
\label{fpeps}\begin{cases}
(m_\eps)_t-\mathrm{div}(a_\eps^*(x)Dm_\eps)-\mathrm{div}(m_\eps\, b_\eps)=0 \qquad (t,x)\in (0,T)\times \Omega_\vep\,,\\
m_\eps(0)=m_0\\
(a_\eps^*(x) Dm_\eps + b_\eps\,m_\eps)\cdot\nu_{|\partial\Omega_\eps}=0\,.
\end{cases}\spazio.
\end{equation}
Then there exists $m\in \elle1$ such that (defining $m_\eps=0$ in $\Omega\setminus\Omega_{\eps}$) we have, up to a  subsequence,
$$
m_\vep \to m \qquad \hbox{in $C^0([0,T]; \elle1)$} 
$$
and $m$ is a weak solution of problem \rife{fp}.
\end{prop}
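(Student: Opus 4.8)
The plan is to combine the local compactness from Lemma \ref{compact-fp} with a \emph{uniform tightness} estimate near $\partial\Omega$ coming from the invariance condition \rife{divereps}; this tightness is exactly what upgrades the $L^1_{loc}$ convergence to a global $C^0([0,T];\elle1)$ convergence and lets us pass to the limit in the weak formulation. First I would check that the hypotheses of Lemma \ref{compact-fp} are met: the Neumann problems \rife{fpeps} conserve mass, so $\|m_\vep(t)\|_{\elle1}=\|m_0\|_{\elle1}$ for all $t$, and $m_\vep\ge0$ by the maximum principle; hence there is $m\in L^\infty((0,T);\elle1)$ and a subsequence with $m_\vep\to m$ in $C^0([0,T];L^1(K))$ for every compact $K\subset\Omega$ (using also $a_\vep\to a$, $b_\vep\to b$ a.e. and $m_\vep(0)=m_0\to m_0$).

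The heart of the matter is to show $\lim_{\eta\to0}\sup_\vep \sup_{t\in[0,T]}\int_{\Gamma_\eta}m_\vep(t)\,dx=0$. For this I would test \rife{fpeps} against a suitable function of the distance, the natural candidate being $\phi=\phi_\eta(d_\eps(x))$ where $\phi_\eta$ is a smooth, bounded, nonincreasing approximation of $-\log$ truncated away from the boundary, i.e. essentially $\phi_\eta(s)\approx (M-\log s)\wedge(M-\log\eta)$ (or a power $d^{-\gamma}$-type barrier). Using the Neumann boundary condition to kill the boundary term, the weak formulation gives
\[
\int_\Omega m_\vep(t)\phi_\eta\,dx=\int_\Omega m_0\phi_\eta\,dx+\int_0^t\!\!\int_\Omega m_\vep\Big(\mathrm{tr}(a_\eps D^2\phi_\eta)+b_\eps\cdot D\phi_\eta\Big)dx\,ds,
\]
and a direct computation of $\mathrm{tr}(a_\eps D^2\phi_\eta)+b_\eps\cdot D\phi_\eta$ in terms of $d_\eps$ produces precisely the combination $\mathrm{div}(a_\eps Dd_\eps)-b_\eps\cdot Dd_\eps$ minus $\frac{a_\eps Dd_\eps\cdot Dd_\eps}{d_\eps}$ (up to the regularizing shifts $r_\eps,h_\eps$), which by \rife{divereps} is bounded above by $C(d_\eps+h_\eps)$ on $\Gamma_{\delta_0}\cap\Omega_\eps$ — times $\phi_\eta'\le0$, hence bounded. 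Away from the boundary $\phi_\eta$ is smooth and bounded so that contribution is controlled by $\|m_0\|_{\elle1}$ times a constant. Grönwall then gives $\sup_t\int_\Omega m_\vep(t)\phi_\eta\,dx\le C\big(\int_\Omega m_0\phi_\eta\,dx+1\big)$ with $C$ independent of $\vep$ and $\eta$; since $\int_\Omega m_0\phi_\eta\,dx$ is finite but blows up as $\eta\to0$ only through the mass of $m_0$ near the boundary, one must instead argue with $\eta$ fixed first: $\int_{\Gamma_\eta}m_\vep(t)\,dx\le \phi_\eta(\eta)^{-1}\int_\Omega m_\vep(t)\phi_\eta\,dx$ and one optimizes. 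The clean way is to keep a genuine barrier $V_\eta$ with $V_\eta\equiv 0$ on $\Omega\setminus\Gamma_{\delta_0}$ and $V_\eta\nearrow+\infty$ near $\partial\Omega$ but with $V_\eta\le \log(\delta_0/\eta)$ cut off at level $\{d_\eps=\eta\}$: then $\sup_t\int m_\vep(t)V_\eta\le C\log(\delta_0/\eta)+\int m_0 V_\eta$, while $\int_{\Gamma_\eta}m_\vep(t)\le \frac{1}{\log(\delta_0/\eta)}\int m_\vep(t)V_\eta$, which does \emph{not} yet go to zero. The genuinely needed trick, and this is the main obstacle, is to exploit that $m_0\in\elle1$ implies $\int_{\Gamma_{\delta}}m_0\to0$ as $\delta\to0$: one runs the estimate on the interval $[0,t]$ starting from a barrier that sees only $\Gamma_{\delta_0}$, obtaining $\int_{\Gamma_\eta}m_\vep(t)\le C\big(\int_{\Gamma_{\delta_0}}m_0+\delta_0\|m_0\|_{\elle1}\big)$ uniformly in $\vep,\eta$, and then sends $\delta_0\to0$; equivalently, one uses $\min(V_\eta,V_{\delta})$ for $\eta<\delta<\delta_0$ and sends $\eta\to0$ then $\delta\to0$. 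I expect this bookkeeping — getting a bound that is both uniform in $\vep$ and vanishing as the boundary strip shrinks — to be the delicate point, and it is exactly where the term $-C\,d(x)$ on the right of \rife{divereps} (rather than $0$) is used, together with the absolute continuity of $m_0$.

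Once tightness is established, convergence in $C^0([0,T];\elle1)$ globally follows: for any $\eta$, split $\|m_\vep(t)-m_{\vep'}(t)\|_{\elle1}\le \|m_\vep-m_{\vep'}\|_{L^1((0,T)\times(\Omega\setminus\Gamma_\eta))}+2\sup_{\vep}\sup_t\int_{\Gamma_\eta}m_\vep(t)$, and $\Omega\setminus\Gamma_\eta$ is compact in $\Omega$ (for $\vep$ small enough that $\Omega_\vep\supset\Omega\setminus\Gamma_\eta$), so the first term goes to $0$ by Lemma \ref{compact-fp} and the second is made small by choosing $\eta$ small. The same splitting shows $m\in C^0([0,T];\elle1)$ and $m(0)=m_0$, $m\ge0$. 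Finally I would verify $m$ solves \rife{fp} in the sense of Definition \ref{deffp}: given an admissible test function $\phi$ (i.e. $\phi\in C([0,T];\elle1)\cap L^\infty$, $\phi(T)=0$, $\mathcal L\phi\in L^\infty$), approximate it by truncations/regularizations $\phi_k$ that are admissible in $\Omega_\vep$ for the operator with $a_\vep,b_\vep$, use the weak formulation of \rife{fpeps} (which holds by the definition of weak solution of the Neumann problem, using $a_\vep$ coercive and the boundary condition), and pass to the limit using: the global $\elle1$ convergence of $m_\vep$, the uniform $L^\infty$ bounds on $\phi_k$ and $\mathcal L\phi_k$, the a.e. convergence $a_\vep\to a$, $b_\vep\to b$, and the tightness to control the contributions near $\partial\Omega$ where the test function approximation is least accurate. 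This yields $\int_0^T\!\!\int_\Omega m(-\phi_t-\mathrm{tr}(aD^2\phi)+b\cdot D\phi)\,dx\,dt=\int_\Omega m_0\phi(0)\,dx$, i.e. $m$ is the desired weak solution.
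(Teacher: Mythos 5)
Your Step 1 matches the paper, and you have correctly located the crux: the only thing that is hard to upgrade from $L^1_{loc}$ to global $L^1$ convergence is the possible escape of mass towards $\partial\Omega$, and the invariance condition \rife{divereps} is exactly what rules that out. However, the route you choose — proving a \emph{uniform} tightness estimate $\lim_{\eta\to0}\sup_\vep\sup_t\int_{\Gamma_\eta}m_\vep(t)\,dx=0$ — is both stronger than what is needed and not actually completed in your argument. You notice this yourself (``which does not yet go to zero''), and the sketch you then offer (running a Grönwall estimate with a cut-off $\min(V_\eta,V_\delta)$, then sending $\eta\to0$ before $\delta\to0$) is left as an expectation rather than a proof; as written, the bound you reach is $\frac{1}{\log(\delta_0/\eta)}\big(C\log(\delta_0/\eta)+\int m_0 V_\eta\big)$, whose leading term is a constant $C$, and you never extract the needed smallness from the $-Cd$ slack in \rife{divereps}. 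This is a genuine gap: the core of the proposition is exactly this step, and it is not carried out.

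The paper avoids the need for uniform tightness of the family $\{m_\vep\}$ entirely. It instead proves \emph{mass conservation of the limit}: testing the approximating FP equation against $\phi_\vep=\log(d_\vep+\delta)-\log\delta$ (the Neumann boundary condition and the sign of $a_\vep D\phi_\vep\cdot\nu$ on $\partial\Omega_\vep$ give an inequality, not an equality), then using \rife{divereps} to bound the resulting term $\mathrm{div}(a_\vep D\phi_\vep)-b_\vep\cdot D\phi_\vep$ from below, yields $\into m_\vep(t)\phi_\vep\,dx\ge\into m_0\phi_\vep(0)\,dx-C$ with $C$ independent of $\vep,\delta$. Passing $\vep\to0$ (split $\Gamma_\eta$/$\Omega\setminus\Gamma_\eta$ to exploit the $L^1_{loc}$ convergence and the bound $|\phi_\vep|\le\log((\eta+\delta)/\delta)$ on $\Gamma_\eta$), dividing by $|\log\delta|$ and letting $\delta\to0$ (dominated convergence, $m_0\in\elle1$) produces $\into m(t)\ge\into m_0$, and with Fatou one gets equality. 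Together with $m_\vep\ge0$ and a.e. convergence, Scheffé's argument then gives $m_\vep(t)\to m(t)$ in $\elle1$ for each $t$. For the uniformity in $t$, the paper only needs equi-integrability of the \emph{limit} $m$ (automatic from $m\in C([0,T];\elle1)$), combined with mass conservation $\into(m_\vep(t)-m(t))=-\int_{\Omega\setminus\Omega_\vep}m_0$ and the elementary identity $\into|m_\vep-m|=\into(m_\vep-m)+2\into(m_\vep-m)^-$; the negative part is supported where $m_\vep\le m$, so it is controlled by the mass of the single function $m$ near $\partial\Omega$, with no need to control $m_\vep$ there uniformly. This is the simplification your proposal misses.

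Finally, your last step (verifying the weak formulation by truncating/regularizing a generic admissible test function) is vague and would be painful to make rigorous. The paper instead uses duality: given an admissible $\phi$ with $f:=\mathcal L(\phi)\in L^\infty$, solve the Neumann problem for $\phi_\vep$ in $\Omega_\vep$ with coefficients $a_\vep,b_\vep$ and the same right-hand side $f$, use the whole-sequence convergence of Corollary \ref{stability}, test \rife{fpeps} against $\phi_\vep$, and pass to the limit using the already-established global $L^1$ convergence of $m_\vep$ and the boundedness of $f$. You should adopt this duality step rather than the ad hoc approximation.
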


\vskip1em

\begin{rem} Despite the above statement is given in a general version, the reader should keep in mind at least two typical examples of approximations. The first one occurs if $a(x)$ degenerates on the boundary in the normal direction, i.e. if $a(x)Dd(x)\cdot Dd(x)=0$ on $\partial \Omega$, and if $b(t,x)$ is bounded in $Q_T$ and the  invariance property \rife{inv-lin} holds true. In this case we can use this result with $\Omega_\vep=\Omega$,  $b_\vep=b$ and $a_\vep(x)= a(x)+ \vep\, I$. Then \rife{divereps} is satisfied provided  $\vep= o(r_\vep)$ and with $h_\vep \simeq  \frac{\vep}{r_\vep}$. 
A second example occurs if the drift $b$ is unbounded near the boundary, which is certainly the case whenever $a(x)$ does not degenerate and the invariance condition \rife{inv-lin} holds. In this case one needs to work on internal domains and the above result can be used with $\Omega_\vep=\{x\,:\, d(x)>\vep\}$, $b_\vep=b$, $a_\vep= a$ and $r_\vep=h_\vep= \vep$ (see Theorem \ref{exfp} below). 
\end{rem}
\vskip0.4em
\begin{proof}
For simplicity, we divide the proof into steps. 
\vskip1em
\emph{Step 1: local convergence.} 
Hereafter, we extend $m_\vep$ to $\Omega$ defining $m_\eps=0$ in $\Omega\setminus\Omega_{\eps}$. Integrating the equation in \rife{fpeps}, one has immediately, for each $t\in[0,T]$, 
\begin{equation}
\label{prob}
\into m_\eps(t)dx=\int_{\Omega_{\eps}}m_0\,dx\to  \into m_0\, dx\,.
\end{equation}
Moreover, by the maximum principle, we have $m_\eps\ge0$ in $Q_T$.

We use Lemma \ref{compact-fp} to deduce that $m_\vep$ is relatively compact and, up to  a subsequence,  converges to some $m\in \elle1$; the convergence holds almost everywhere in $Q_T$ and, in addition, in $C^0([0,T]; L^1(K)$ for every compact subset $K\subset \Omega$.    

\vskip0.5em
\emph{Step 2: global $L^1$ convergence.} Now we want to prove that, for every $t\in [0,T]$,
$$
m_\eps(t)\to m(t)\hspace{2cm}\mbox{ strongly in }L^1(\Omega)\spazio.
$$
Since $m_\eps\ge 0$, it suffices to prove that, for every $t\in [0,T]$,
\be\label{ae+int}
m_\eps(t)\to m(t)\hspace{0.5cm} \mbox{ a.e. in $\Omega$}\hspace{1cm}\mbox{and}\hspace{1cm}\into m_\eps(t)dx\to\into m(t)dx\spazio.	
\ee
The almost everywhere  convergence of $m_\eps(t)$ to $m(t)$ is already given by Lemma \ref{compact-fp} (up to a subsequence, which is not relabeled).
For the convergence of the integrals, by \eqref{prob} we have to prove that
\begin{equation}\label{aim}
\into m(t)dx=\into m_0\, dx\spazio.
\end{equation}
By Fatou's lemma  and \rife{prob} we have 
\begin{equation}\label{lessie}
\into m(t)dx\le \into m_0\, dx\spazio,
\end{equation} 
which in particular implies that $m\in \elle1$. To prove the reverse inequality, for $\de>0$ we consider the the auxiliary function
\be\label{log}
\phi_\eps=\log(d_\eps(x)+\delta) -\log \de\,.
\ee
Of course we have that $\phi_\eps\to\phi:=\log(d(x)+\delta)-\log\delta$.  We use $\phi_\eps$ as a  test function in the equation of \eqref{fpeps}. 
Using the Neumann condition for $m_\vep$ and that $a_\vep D\phi_\vep \cdot \nu \leq 0$ on $\partial\Omega_\vep$, integrating by parts twice we obtain
\begin{equation}\label{intermediate}
\intoe m_\eps(t)\phi_\eps(t)\spazio dx+\intie m_\eps(-\mathrm{div}(a_\eps(x)D\phi_\eps)+b_\eps\, D\phi_\eps)\spazio dxds \geq \intoe m_0\phi_\eps(0)\spazio dx\,.
\end{equation}
Computing the gradient of $\phi_\vep$  we get, for $\eps$ sufficiently small,
\begin{align*}
&\intie m_\eps (\mathrm{div}(a_\eps(x)D\phi_\eps)-b_\eps\,D\phi_\eps)\spazio dxds 
\\
 & =\intie \frac{m_\eps}{d_\eps(x)+\de}\, \left\{ (\mathrm{div} (a_\eps(x)Dd_\eps)-b_\eps\, Dd_\eps) -\frac{ a_\eps(x)Dd_\eps\cdot Dd_\eps  }{d_\eps(x)+\delta}\right\}\spazio dxds
\\
  \geq 
& \intie \frac{m_\eps}{d_\eps(x)+\de}\left\{\mathrm{div}(a_\eps(x)Dd_\eps)-b_\eps\, Dd_\eps-   \frac{ a_\eps(x)Dd_\eps\cdot Dd_\eps}{d_\eps(x)+r_\eps} \right\}\spazio dxds\,.
\end{align*}
and thanks to assumption \rife{divereps}  we deduce
\begin{align*}
&\intie m_\eps (\mathrm{div}(a_\eps(x)D\phi_\eps)-b_\eps\, D\phi_\eps)\spazio dxds 
 \geq  - C \int\limits_{\Omega_\eps\cap\Gamma_{\delta_0}} m_\eps\,   \frac{d_\eps(x)+h_\eps}{d_\eps(x)+\delta}\, dxds
\\
&
+\int_0^t \int\limits_{\Omega_\eps\setminus\Gamma_{\delta_0}} \frac{m_\eps}{d_\eps(x)+\de}\,   \left\{\mathrm{div} (a_\eps(x)Dd_\eps)-b_\eps\, Dd_\eps-  \frac{ a_\eps(x)Dd_\eps\cdot Dd_\eps}{d_\eps(x)+r_\eps} \right\}\, dxds\,.
\end{align*}
The first integral in the right-hand side is uniformly bounded because $h_\vep\leq \de$ for small $\vep$. Since $b_\eps$ is locally uniformly bounded in $Q_T$, $a_\eps$ is Lipschitz and, for $\vep$ sufficiently small, $d_\eps(x)\geq\frac{\delta_0}{2}$ in $\Omega\setminus\Gamma_{\delta_0}$, the second integral is also bounded
uniformly. So we conclude that
$$
\intie m_\eps (\mathrm{div}(a_\eps(x)D\phi_\eps)-b_\eps\, D\phi_\eps)\spazio dxds 
 \geq  - C
$$ 
for some $C>0$ independent of $\vep, \de$.  Plugging this estimate into \eqref{intermediate}, we get
\be\label{pre-eps} \into m_\eps(t)\phi_\eps(t)\, dx \geq\intoe m_0\phi_\eps(0)\, dx - C\,.
\ee
Now we observe that the integral in the left side converges: indeed, for any $\eta>0$ we have (we call $\Gamma_\eta^\vep= \{x\,:\, d_\vep(x)<\eta\}$)
\begin{align*}
&  \into |m_\eps(t)\phi_\eps(t)- m(t)\phi(t)|\, dx   \leq \int_{\Gamma_\eta}  |m_\eps(t)\phi_\eps(t)- m(t)\phi(t)|\, dx
\\
& \qquad\qquad\qquad \qquad \qquad  + \int_{\Omega\setminus \Gamma_\eta} |m_\eps(t)\phi_\eps(t)- m(t)\phi(t)|dx
\\
& \quad \leq C\, \log\left(\frac{\eta+\de}\de\right) + \int_{\Omega\setminus \Gamma_\eta} |m_\eps(t)\phi_\eps(t)- m(t)\phi(t)|
dx\,.
\end{align*}
We let first $\vep \to 0$, using  the $L^1_{loc}$ convergence of $m_\eps$, and then we let $\eta\to 0$, so that last two terms will vanish. Hence we deduce
$$
\into m_\eps(t)\phi_\eps(t)\, dx \to \into m(t)\, \phi(t)\, dx\,.
$$
Therefore we obtain from \rife{pre-eps}, letting $\vep\to 0$,
$$
\into m (t)\phi(t)\, dx \geq  \into m_0\phi(0)\, dx - C \,.
$$
Since $\phi= \log(d(x)+
\de)-\log \de  \leq |\log \de| + c $ we deduce that
$$
 \into m (t)\spazio dx \geq \into m_0 \frac{\log(d(x)+
\de)-\log \de}{|\log \de|}\spazio dx - \frac C {|\log \de|}\,.
$$
We now let $\de\to 0$; using   Lebesgue's theorem and since $m_0\in \elle1$, we get
$$
\into m (t)\spazio dx \geq \into m_0\spazio dx  \,.
$$
Thus \eqref{aim} is proved, we have \rife{ae+int} and with that we conclude that $m_\vep(t) \to m(t)$ in $\elle1$, for all $t>0$.
By Lebesgue theorem, we also deduce the convergence of $m_\vep$ to $m$ in $L^1(Q_T)$.

\vskip1em

\emph{Step 3: convergence in $C^0([0,T]; L^1(\Omega))$}.

First we observe that $m\in C([0,T];L^1(\Omega))$, with a similar argument as used above. Indeed, let  $t_n\to t$; since $m\in C([0,T];L_{loc}^1(\Omega))$ we have that $m(t_n)$ always admits a subsequence converging to $m(t)$ a.e. in $\Omega$. Since $\into m(t_n)\,dx = \into m_0\, dx= \into m(t)\, dx$, we deduce again that $m(t_n) \to m(t)$ in $\elle1$.

Since $m\in C^0([0,T]; L^1(\Omega))$,  a compactness argument implies that
\be\label{equi-mt}
\lim_{|E|\to 0 }\,\,\, \sup_{t\in [0,T]} \int_E m(t) \, dx=0\,.
\ee
Using positive and negative parts, i.e. $s= s^+-s^-$, $|s|=s^++ s^- $, we split
\be\label{new1}
\begin{split}
\into |m_\vep (t)- m(t) | \, dx& = \into (m_\vep (t)- m(t) ) \, dx+ 2\into (m_\vep (t)- m(t))^- \, dx
\\
& = -\int_{\Omega\setminus \Omega_{\vep}} m_0\, dx + 2\into (m_\vep (t)- m(t))^- \, dx
\end{split}
\ee
because of mass conservation. Last integral is restricted to where $m_\vep(t)\leq m(t)$. 
So, we split once more
\begin{align*}
 \into (m_\vep (t)- m(t))^-\, dx  & \leq    \int_{\Omega\setminus\Gamma_\eta} (m_\vep (t)- m(t))^- \, dx+ 2  \int_{\Gamma_\eta} m(t) \, dx 
\\
& \leq 
 \int_{\Omega\setminus \Gamma_\eta} |m_\vep (t)- m(t)|\, dx+ 2   \int_{\Gamma_\eta} m(t) \, dx 
\end{align*}
which yields
$$
 \sup_{t\in [0,T]}\, \, \into (m_\vep (t)- m(t))^- \, dx   \leq     \sup_{t\in [0,T]}\,\, \int_{\Omega\setminus \Gamma_\eta} |m_\vep (t)- m(t)|\, dx+ 2  \sup_{t\in [0,T]}\,\,  \int_{\Gamma_\eta} m(t) \, dx \,.
$$
Now recall that $m_\vep \to m$ in $C^0([0,T]; L^1(K))$, for any compact subset $K$. So, when we let $\vep \to 0$ the first term in the right-hand side vanishes and we get
$$
\lim\limits_{\vep \to 0}\,\,  \sup_{t\in [0,T]}\, \, \into (m_\vep (t)- m(t))^- \, dx    \leq   2  \sup_{t\in [0,T]}\,\,  \int_{\Gamma_\eta} m(t) \, dx\,.
$$
Finally, we let $\eta\to 0$ and we use \rife{equi-mt} in the last term, and we conclude that
$$
\lim\limits_{\vep \to 0}\,\,  \sup_{t\in [0,T]}\, \, \into (m_\vep (t)- m(t))^- \, dx  =0\,.
$$
Then from \rife{new1} we deduce that 
$$
\lim\limits_{\vep \to 0}\,\,  \sup_{t\in [0,T]}\, \, \into |m_\vep (t)- m(t) | \, dx  =0
$$
that is $m_\vep \to m$ in $C^0([0,T]; L^1(\Omega))$.

\vskip1em			
\textit{Step 4: Conclusion.} We take $\phi\in C([0,T];L^1(\Omega))\cap L^\infty([0,T]\times\Omega)$ such that $\phi$ satisfies
$$
\begin{cases}
-\phi_t-\mathrm{div}(a(x)D\phi)+b\cdot D\phi\in L^\infty([0,T]\times\Omega)\\
\phi(T)=0
\end{cases}
$$
in the weak sense. Let us consider the solution $\phi_\vep$ of the following problem
\begin{align*}
& \qquad  \begin{cases}
-(\phi_\vep)_t-\mathrm{div}(a_\eps(x) D\phi_\vep)+b_\vep \cdot D\phi_\vep= f & \quad \hbox{in $(0,T)\times\Omega_\eps$}
\\
a_\eps(x)D\phi_\vep \cdot \nu =0 & \quad \hbox{in $(0,T)\times \partial\Omega_\eps$}
\\ \phi_\vep(T)=0 & 
\end{cases}
\\
&\hbox{where} \quad  f:= -\phi_t-\mathrm{div}(a(x)D\phi)+bD\phi\,.
\end{align*}
As always, we set $\phi_\eps:=0$ on $\Omega\setminus\Omega_\eps$.

Applying  Corollary \ref{stability}, we have that $\phi_\vep$ converges to $\phi$ in $L^2(0,T; W^{1,2}(K))$ and in $C^0([0,T]; L^p(\Omega))$ for all $p<\infty$. 
Now, taking $\phi_\vep$ as test function in \eqref{fpeps} we get
$$
 -\into m_0\phi_\vep(0)\spazio dx+ \intif m_\eps(-\phi_t-\mathrm{div}(a(x)D\phi)+b\cdot D\phi)\spazio dxdt=0\,.
$$
Since $\phi_\vep(0) \to \phi(0)$ a.e. in $\Omega$, we can pass to the limit in the first term with Lebesgue's theorem. In the second term, we use the $L^1$ convergence of $m_\vep$ towards $m$. Finally, we obtain that $m$ satisfies
$$
\intif m(-\phi_t-\mathrm{div}(a(x)D\phi)+bD\phi)\spazio dxdt=\into m_0\phi(0)\spazio dx\spazio.
$$
\end{proof}

Finally, we conclude with the existence part.

\begin{thm}\label{exfp}
Let $m_0\in   \elle1$, $m_0\ge0$.
Let $a\in W^{1,\infty}(\Omega)$ satisfy \rife{deg}.   
Assume that $  b\in L^\infty(0,T; L^\infty_{loc}( \Omega))$ and that there exist  
$\delta_0, C>0$ such that the following inequality holds:
\begin{equation}\label{diver}
\mathrm{div}(a(x)Dd(x))-b(t,x)\cdot Dd(x)\ge   \frac{a(x) Dd(x) \cdot Dd(x)}{d(x)} - C\, d(x)
\end{equation}
for almost every $t\in (0,T)$ and  $x\in\Gamma_{\delta_0}$.  
Then there exists a solution of problem \eqref{fp}.
\end{thm}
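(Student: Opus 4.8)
The plan is to derive the existence of $m$ from the global $L^1$ stability of Proposition \ref{stab-fp}, using the internal approximation $\Omega_\vep=\{x\in\Omega:d(x)>\vep\}$ (whose closure is, for $\vep$ small, the $C^2$ compact domain $D_{1/\vep}$ of \rife{qn}), together with the constant choices $a_\vep=a$, $b_\vep=b$, and the correction parameters $r_\vep=h_\vep=\vep$. This is exactly the second model situation mentioned in the Remark following Proposition \ref{stab-fp}: pushing the boundary slightly inward replaces the possibly degenerate matrix and the possibly unbounded drift near $\partial\Omega$ by uniformly parabolic problems with bounded coefficients on each $\Omega_\vep$. (When $a$ degenerates on $\partial\Omega$ and $b$ is globally bounded one could instead take $\Omega_\vep=\Omega$, $a_\vep=a+\vep I$; but the internal approximation covers the general hypotheses of the theorem uniformly.)

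First I would check the structural hypotheses of Proposition \ref{stab-fp}. As $a\in W^{1,\infty}(\Omega)$ is Lipschitz and uniformly coercive on each $D_r$ by \rife{ell}, the constant sequence $a_\vep=a$ is locally uniformly coercive and Lipschitz uniformly in $\vep$; as $b\in L^\infty(0,T;L^\infty_{loc}(\Omega))$, the constant sequence $b_\vep=b$ satisfies \rife{beps}; and $a_\vep\to a$, $b_\vep\to b$ pointwise. For each fixed small $\vep$, $\Omega_\vep$ is a smooth bounded domain on which $a$ is uniformly elliptic and Lipschitz and $b$ is bounded, so the conservative Neumann problem \rife{fpeps} admits a nonnegative weak solution $m_\vep\in C^0([0,T];L^1(\Omega_\vep))$: one approximates $m_0$ in $L^1$ by bounded data, solves by standard linear parabolic theory, and passes to the limit using that the conservative problem is order preserving, mass conserving and $L^1$-contractive. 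Integrating the equation in \rife{fpeps} over $\Omega_\vep$ and using the conservative Neumann condition gives $\int_{\Omega_\vep}m_\vep(t)\,dx=\int_{\Omega_\vep}m_0\,dx\le\|m_0\|_{L^1(\Omega)}$ for all $t$, so $\|m_\vep(t)\|_{L^1(\Omega_\vep)}$ is bounded uniformly in $\vep$ and $t$, as required.

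The heart of the argument is the verification of the approximate invariance condition \rife{divereps}. Shrinking $\delta_0$ if needed, I may assume $\delta_0\le\vep_0$, where $\vep_0>0$ is such that on $\Gamma_{\vep_0}$ the function $d$ coincides with the true distance to $\partial\Omega$, is $C^2$, and the collar has the tubular structure \rife{omegadelta}. Then for $\vep<\delta_0$ and $x\in\Gamma_{\delta_0}\cap\Omega_\vep$, i.e. $\vep<d(x)<\delta_0$, the point of $\partial\Omega_\vep=\{d=\vep\}$ nearest to $x$ is $\bar x-\vep\,\nu(\bar x)$, lying on the same inward normal segment, so that $d_\vep(x):=d_{\Omega_\vep}(x)=d(x)-\vep$, $Dd_\vep(x)=Dd(x)$ and $D^2d_\vep(x)=D^2d(x)$ on this region. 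Since $a$ is Lipschitz and $d_\vep(x)+r_\vep=d(x)$, $d_\vep(x)+h_\vep=d(x)$, the inequality \rife{divereps} reduces verbatim to the assumed condition \rife{diver}, which holds throughout $\Gamma_{\delta_0}$. On the remaining set $\Omega_\vep\setminus\Gamma_{\delta_0}$ one has $d_\vep\ge\delta_0-\vep$ and all of $a$, $b$, $Dd_\vep$ are bounded on the fixed compact set $\{d\ge\delta_0\}$, so every term in \rife{divereps} is controlled there — and this is precisely the range where the proof of Proposition \ref{stab-fp} only uses such boundedness, so nothing further is needed.

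Once the hypotheses are in place, Proposition \ref{stab-fp} yields, along a subsequence, $m_\vep\to m$ in $C^0([0,T];L^1(\Omega))$ with $m\ge0$ a weak solution of \rife{fp}; since $m_\vep(0)=m_0$ extended by zero outside $\Omega_\vep$ converges to $m_0$ in $L^1(\Omega)$, we also obtain $m(0)=m_0$. The only genuinely delicate point is the geometric identity $d_{\Omega_\vep}=d_\Omega-\vep$ near $\partial\Omega_\vep$ — equivalently, that the inner parallel sets of a $C^2$ domain are themselves $C^2$ domains with the expected distance function — which is what makes the exact matching of $r_\vep$ and $h_\vep$ work; everything else is either immediate or a routine appeal to linear parabolic theory.
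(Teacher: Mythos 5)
Your proposal is correct and follows essentially the same route as the paper: both take $\Omega_\vep=\{d>\vep\}$, $a_\vep=a$, $b_\vep=b$, $r_\vep=h_\vep=\vep$, observe that $d_{\Omega_\vep}=d-\vep$ (hence $Dd_\vep=Dd$, $D^2d_\vep=D^2d$) in the collar, so that \eqref{divereps} becomes precisely \eqref{diver}, and then invoke Proposition \ref{stab-fp}. You spell out the verification of the remaining structural hypotheses (local coercivity of $a$, local boundedness of $b$, uniform $L^1$ mass bound) in somewhat more detail than the paper, which dismisses them as immediate, but the substance is the same.
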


\begin{proof}  For each $\eps$ we consider $a_\eps=a$, $b_\vep=b$ and $\Omega_{\eps}=\{ d(x)>\eps \}$. It is immediate to check that  the assumptions of Proposition \ref{stab-fp} are satisfied: indeed, since $d_\eps(x)=d(x)-\eps$ near $\partial\Omega$, we have in $\Omega_{\eps}$
\begin{align*}
& \mathrm{div}(a_\eps(x)Dd_\eps(x))-b_\vep (t,x)\cdot Dd_\eps(x)=   \mathrm{div}(a(x)Dd(x))-b (t,x)\cdot Dd(x) 
\\
& \qquad \geq \frac{a(x) Dd(x) \cdot Dd(x)}{d(x)} - C\, d(x)=\frac{a_\eps(x) Dd_\eps(x) \cdot Dd_\eps(x)}{d_\eps(x)+\eps} - C\, d_\eps(x)-C\eps
\end{align*}
which gives \rife{divereps}. So by solving the approximating problems \rife{fpeps} and passing to the limit, thanks to Proposition \ref{stab-fp}, we obtain the existence of a solution.
\end{proof}

\subsection{Uniqueness of solutions}

The uniqueness of solutions for the Fokker-Planck equation comes easily from the existence of solutions of the Hamilton-Jacobi-Bellman equation.
		
\begin{thm}\label{exfp}
Let $m_0\in   \elle1$, $m_0\ge0$. Assume that $a(\cdot)\in W^{1,\infty}(\Omega)$ satisfies \eqref{deg}. Assume also that  $  b\in L^\infty(0,T; L^\infty_{loc}( \Omega))$ and that \rife{diver} holds true.
Then there exists a unique weak solution of the Fokker-Planck equation \eqref{fp}.
\end{thm}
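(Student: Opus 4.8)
The strategy is to derive uniqueness by duality with the Hamilton--Jacobi--Bellman equation, relying on the existence result of Theorem \ref{exhjb}. By the reduction recalled at the beginning of this section we may work directly with \rife{fp}, for which (Definition \ref{deffp}) a weak solution is a nonnegative $m\in C([0,T];\elle{1})$ satisfying
\be\label{dualFP}
\intif m\,\left(-\phi_t-\mathrm{div}(a(x)D\phi)+b(t,x)\cdot D\phi\right)\,dx\,dt=\into m_0\,\phi(0)\,dx
\ee
for every $\phi\in C([0,T];\elle{1})\cap L^\infty(Q_T)$ with $\phi(T)=0$ whose (distributional) image $\mathcal{L}(\phi):=-\phi_t-\mathrm{div}(a(x)D\phi)+b(t,x)\cdot D\phi$ lies in $L^\infty(Q_T)$, in the sense of Definition \ref{defhjb}. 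If $m_1,m_2$ are two such solutions, then $w:=m_1-m_2\in C([0,T];\elle{1})$ satisfies, by linearity,
\be\label{orthoFP}
\intif w\,\mathcal{L}(\phi)\,dx\,dt=0
\ee
for every admissible $\phi$. It therefore suffices to show that $\mathcal{L}$ maps admissible test functions onto a set large enough to force $w\equiv 0$; the plan is to exhibit, for each $\psi$ smooth and compactly supported in $Q_T$, an admissible $\phi$ with $\mathcal{L}(\phi)=\psi$.

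Fix $\psi\in C_c^\infty((0,T)\times\Omega)$. The backward problem $\mathcal{L}(\phi)=\psi$, $\phi(T)=0$, is a Hamilton--Jacobi--Bellman equation of the type treated in Section 3: rewriting $\mathrm{div}(a(x)D\phi)$ in non-divergence form, which is legitimate since $a\in W^{1,\infty}(\Omega)$, it reads $-\phi_t-\mathrm{tr}(a(x)D^2\phi)+H_\psi(t,x,D\phi)=0$ with $\phi(T)=0$, where $H_\psi(t,x,p)$ is affine in $p$, with $H_\psi(t,x,0)=-\psi(t,x)\in\parelle{\infty}$ and, for every compact $K\subset\Omega$, $|H_\psi(t,x,p)|\le C_K(1+|p|^2)$ on $(0,T)\times K$ (here one uses that $b\in L^\infty(0,T;L^\infty_{loc}(\Omega))$ is bounded on $K$, that the first derivatives of $a$ are bounded, and that $\psi$ is bounded). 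Hence $H_\psi$ satisfies \rife{H0} and \rife{quadgrow}, and Theorem \ref{exhjb} (with terminal datum $G\equiv 0$) yields a weak solution $\phi$ in the sense of Definition \ref{defhjb}. By Definition \ref{defhjb} and the Remark following it, $\phi\in L^\infty(Q_T)\cap C([0,T];\elle{p})$ for every $p\ge 1$; together with $\phi(T)=0$ and $\mathcal{L}(\phi)=\psi\in L^\infty(Q_T)$, this makes $\phi$ an admissible test function in \rife{orthoFP}.

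Inserting this $\phi$ into \rife{orthoFP} gives $\intif w\,\psi\,dx\,dt=0$ for all $\psi\in C_c^\infty((0,T)\times\Omega)$, so $w=0$ a.e.\ in $Q_T$ and, by continuity of $t\mapsto w(t)$ in $\elle{1}$, $m_1(t)=m_2(t)$ for every $t\in[0,T]$. The main point to be careful about is checking that the constructed $\phi$ genuinely belongs to the admissible class: the global bound $\phi\in L^\infty(Q_T)$ (which comes from the maximum principle, since $\|H_\psi(\cdot,\cdot,0)\|_{L^\infty}=\|\psi\|_{L^\infty}<\infty$), the global continuity $\phi\in C([0,T];\elle{1})$ up to the boundary (Definition \ref{defhjb} and the subsequent Remark), and the fact that $H_\psi$ fits the hypotheses of Theorem \ref{exhjb} even though $b$ is only locally bounded --- which works precisely because $\psi$ is supported away from $\partial\Omega$ and the growth restrictions on the Hamiltonian are imposed only locally. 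It is worth noting that the invariance condition \rife{diver} plays no role in this uniqueness argument; it enters the statement only through the existence part proved above, which guarantees that the class of weak solutions is nonempty.
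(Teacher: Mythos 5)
Your proof is correct and takes essentially the same duality route as the paper: both invoke the HJB existence result (Theorem \ref{exhjb}) to produce an admissible test function $\phi$ with $\phi(T)=0$ and prescribed $\mathcal{L}(\phi)\in L^\infty$, then pair it with $w=m_1-m_2$ in Definition \ref{deffp}. The only (cosmetic) difference is that the paper solves once with right-hand side $\mathrm{sgn}(w)\in L^\infty$, obtaining $\intif |w|\,dx\,dt=0$ in a single step, whereas you sweep over all $\psi\in C_c^\infty((0,T)\times\Omega)$ and conclude by density.
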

		
\begin{proof}
Let $m_1$ and $m_2$ be two solutions of \eqref{fp}. Then $m:=m_1-m_2$ solves in the weak sense
\begin{equation*}
\begin{cases}
m_t-\mathrm{div}(a^*(x)Dm)-\mathrm{div}(m\, b)=0\\
m(0)=0\,.
\end{cases}
\end{equation*}
Now, we take $\phi$ as the solution of the Hamilton-Jacobi equation
$$
			\begin{cases}
			-\phi_t-\mathrm{div}(a(x)D\phi)+bD\phi=\mathrm{sgn}(m) \\
			\phi(T)=0\,,
			\end{cases} 
			$$
where $\mathrm{sgn}(m)= m/|m|\, \mathds 1_{\{m\neq 0\}}$.
We use $\phi$ as test function in the weak formulation of $m$, obtaining
\begin{equation*}
\intif |m|\spazio dxdt=\intif m\spazio\mathrm{sgn}(m)\spazio dxdt=0\spazio.
\end{equation*}
So $m\equiv0$ and the proof is concluded.
\end{proof}

In the end, from the equivalence between the two problems \rife{kfp} and \rife{fp}, we have proved Theorem \ref{teo2}.

%

\section{The mean field game system}

We are now ready to study the mean field game  system \eqref{mfg} under the invariance conditions. For convenience, we rewrite here the system, which reads as
 \begin{eqnarray}
  \label{mfg-hjb}
&\begin{cases} -\partial_t u - \sum\limits_{i,j} a_{ij}(x) \partial_{ij}^2 u +H(t,x,Du)=F(t,x,m)\,, &
 (t,x)\in (0,T)\times\Omega \\
u(T)= G(x,m(T))& 
\end{cases}
\\
\label{mfg-fp}
& \begin{cases}
\partial_t m - \sum\limits_{i,j} \partial_{ij}^2 (a_{ij}(x)m) -\mathrm{div}(mH_p(t,x,Du))=0\,, & (t,x)\in (0,T)\times\Omega\\
m(0)=m_0\,.& 
\end{cases}
\end{eqnarray}
Let us recall that the matrix $a(x)$ satisfies \rife{deg}, the Hamiltonian $H$ satisfies assumptions \rife{convex}-\rife{linder} and that the invariance condition \rife{invariance} holds true. The nonlinearities $F,G$ satisfy conditions \rife{F}, \rife{G}. This implies that, for any given $m\in C([0,T]\times L^1(\Omega))$, the HJB equation has a unique solution thanks to  Theorem \ref{exhjb} and Theorem \ref{unichjb}. Conversely, for every $u$ which is locally Lipschitz, the growth condition \rife{linder} guarantees that $H_p(t, x,Du)$ is a locally bounded vector field, so the FP equation has a unique solution given by Theorem \ref{exfp}. This justifies our definition below.

\begin{defn}\label{sol-mfg}
We say that a couple $(u,m)\in L^\infty([0,T]\times\Omega)\times C([0,T]\times L^1(\Omega))$ is a weak solution of the system \eqref{mfg} if $u$ is a solution of the Hamilton-Jacobi-Bellman equation \eqref{mfg-hjb} in the sense of Definition \ref{defhjb} and $m$ is a solution of the Fokker-Planck equation \eqref{mfg-fp} in the sense of Definition \ref{deffp}.
\end{defn}

\subsection{Existence of solutions}

Here we prove the existence of a  solution to the mean field game system.

\begin{thm}\label{existence}
Assume that hypotheses \rife{deg}, \rife{convex}-\rife{linder}, \rife{invariance}, \rife{F} and \rife{G} hold true. 
Then there exists at least one solution $(u,m)$ of  \eqref{mfg-hjb}-\rife{mfg-fp}, in the sense of Definition  \ref{sol-mfg}.
\end{thm}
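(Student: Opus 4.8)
The plan is to prove existence of a solution to \rife{mfg-hjb}--\rife{mfg-fp} via a Schauder-type fixed point argument, built on the well-posedness results already established for the two equations separately. First I would set up the map. Fix a constant $R_0>0$ (to be determined) and let $\mathcal{K}$ be the set of $m\in C^0([0,T];\elle1)$ with $m(t)\ge 0$, $\int_\Omega m(t)\,dx=\int_\Omega m_0\,dx$ for all $t$, equipped with the $C^0([0,T];L^1(\Omega))$ topology; one needs $\mathcal{K}$ to be convex and closed, which is clear. To a given $m\in\mathcal{K}$ I associate $u=u[m]$, the unique bounded weak solution of \rife{mfg-hjb} with terminal datum $G(x,m(T))$ and right-hand side $F(t,x,m)$ absorbed into the Hamiltonian (this is legitimate by the remark at the start of Section 3, since $F$ does not depend on $Du$, so \rife{invariance} is unaffected and \rife{H0}, \rife{quadgrow} still hold using \rife{F} and the $L^\infty$ bound on $G(\cdot,m(T))$ from \rife{G}); existence and uniqueness of $u[m]$ come from Theorem \ref{exhjb} and Theorem \ref{unichjb}. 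Then $u[m]$ is locally Lipschitz in $x$ by interior parabolic regularity, so $b:=-H_p(t,x,Du[m])$ is a vector field in $L^\infty(0,T;L^\infty_{loc}(\Omega))$ by \rife{linder}, and it satisfies the invariance inequality \rife{diver} because \rife{invariance} holds for every $p$, in particular for $p=Du[m]$. Hence Theorem \ref{exfp} gives a unique weak solution $\tilde m\in C^0([0,T];\elle1)$ of the Fokker--Planck equation with this drift and initial datum $m_0$, and mass is conserved so $\tilde m\in\mathcal{K}$. Define $\Phi(m)=\tilde m$.

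Next I would verify the hypotheses of Schauder's theorem for $\Phi:\mathcal{K}\to\mathcal{K}$. The image is \emph{a priori} compact: by \rife{F} the source $F(t,x,m)$ is bounded in $L^\infty(Q_T)$ uniformly over $\mathcal{K}$ and $G(\cdot,m(T))$ is bounded in $L^\infty(\Omega)$, so by the maximum principle $\|u[m]\|_\infty$ is bounded by a constant $R_0$ independent of $m\in\mathcal{K}$; this fixes $R_0$ and shows the construction is consistent. Consequently the drifts $b=-H_p(t,x,Du[m])$ are bounded in $(0,T)\times K$ uniformly in $m$ for each compact $K$ (combine the $L^\infty$ bound with the interior gradient estimate of Theorem V.3.1 of \cite{LSU}, which depends only on $\|u[m]\|_\infty$ and the coefficients), and also the invariance constant $C$ in \rife{diver} is uniform. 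Then the compactness statements in Proposition \ref{stab-fp} (used with $\Omega_\eps$, $a_\eps$, $b_\eps$ fixed, or more directly Lemma \ref{compact-fp} together with the uniform $L^1$-equi-integrability coming from mass conservation and the $\Gamma_\eta$-splitting argument of Step 3 there) show that $\Phi(\mathcal{K})$ is relatively compact in $C^0([0,T];\elle1)$.

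For continuity of $\Phi$: take $m_n\to m$ in $C^0([0,T];\elle1)$. Then $m_n(T)\to m(T)$ in $\elle1$, so $G(\cdot,m_n(T))\to G(\cdot,m(T))$ in $\elle1$ and (after extracting, a.e.) by \rife{G}, and $F(t,x,m_n)\to F(t,x,m)$ in $L^1(Q_T)$ by \rife{F}, with uniform $L^\infty$ bounds throughout. By the stability Corollary \ref{stability} (the whole sequence converges because the limiting HJB problem has a unique solution), $u[m_n]\to u[m]$ in $L^2(0,T;W^{1,2}_{loc})$ and in $C^0([0,T];L^p)$, and by the interior gradient bound $Du[m_n]\to Du[m]$ a.e. in $Q_T$; hence $H_p(t,x,Du[m_n])\to H_p(t,x,Du[m])$ a.e., with local uniform bounds. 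Feeding this into Proposition \ref{stab-fp} (applied with $\Omega_\eps=\Omega$, $a_\eps=a$ if $a$ degenerates on $\partial\Omega$, or on interior domains otherwise, and $b_\eps=-H_p(t,x,Du[m_n])$), and invoking uniqueness of the Fokker--Planck solution (Theorem \ref{exfp}) to pass from subsequences to the full sequence, gives $\Phi(m_n)=\tilde m_n\to\tilde m=\Phi(m)$ in $C^0([0,T];\elle1)$. Schauder's fixed point theorem then yields $m=\Phi(m)$, and $(u[m],m)$ is a weak solution in the sense of Definition \ref{sol-mfg}.

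The main obstacle I anticipate is the continuity step, specifically ensuring the a.e.\ convergence $Du[m_n]\to Du[m]$ that is needed to pass to the limit in the nonlinear drift $H_p(t,x,Du[m_n])$: Corollary \ref{stability} gives $L^2_{loc}$ convergence of the gradients (hence a.e.\ along a subsequence), but one must argue carefully that this suffices uniformly enough — combined with the local uniform $L^\infty$ bounds on $H_p$ — to apply the Fokker--Planck stability result, and that uniqueness upgrades subsequential convergence to convergence of the full sequence. A secondary technical point is verifying that the drift $-H_p(t,x,Du[m])$ genuinely meets the hypotheses of Proposition \ref{stab-fp}/Theorem \ref{exfp}, i.e.\ that the invariance inequality \rife{diver} holds with a constant independent of $m$; this is immediate from \rife{invariance} holding for \emph{all} $p$, which is exactly the structural reason the whole scheme closes.
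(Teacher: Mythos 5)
Your Schauder fixed-point argument is a genuinely different route from the paper's. The paper does not set up a fixed-point map on $\Omega$ at all: it instead solves the \emph{full} approximate MFG system on the interior domains $\Omega_\eps=\{d>\eps\}$ with a Neumann condition (system \eqref{mfgeps}), where the operator is uniformly elliptic and the drift bounded, so that existence of $(u_\eps,m_\eps)$ follows from standard theory; then it extracts limits using Lemma~\ref{compact-hjb} for $u_\eps$ and Proposition~\ref{stab-fp} for $m_\eps$, and identifies the limit as a solution by the stability Propositions~\ref{stab-hjb} and~\ref{stab-fp}. So the paper performs the ``compactness plus limit identification'' directly on an $\eps$-indexed family, whereas you build the map $m\mapsto\Phi(m)=\tilde m$ at the limiting level and invoke Schauder; in your scheme the well-posedness theorems of Sections~3--4 (Theorems~\ref{exhjb}, \ref{unichjb}, \ref{exfp}) are used as black boxes to define $\Phi$, and the paper's Proposition~\ref{stab-fp} then serves you twice (for compactness of the range and for continuity), while in the paper it is used once on the actual approximating sequence. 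Both approaches ultimately rest on the same estimates.

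The single step you should firm up is the relative compactness of $\Phi(\mathcal K)$ in $C^0([0,T];L^1(\Omega))$. As stated, you appeal to Proposition~\ref{stab-fp} and Lemma~\ref{compact-fp}, but those results are formulated for a sequence indexed by $\eps\to0$ (a fixed limit problem being approximated), not for an arbitrary family $\{\Phi(m):m\in\mathcal K\}$. What is actually required is a uniform boundary equi-integrability estimate of the form $\sup_{m\in\mathcal K}\sup_{t}\int_{\Gamma_\eta}\Phi(m)(t)\,dx\to0$ as $\eta\to0$. Mass conservation alone does not give this (a sequence of bumps drifting to $\partial\Omega$ conserves mass but is not equi-integrable); what does is the invariance structure, specifically the logarithmic test-function estimate of \emph{Step~2} (not Step~3) in the proof of Proposition~\ref{stab-fp}, applied with a constant $C$ that is uniform over $m\in\mathcal K$ precisely because \eqref{invariance} holds for all $p$. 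After the $\log\delta$ terms cancel by mass conservation, that estimate pins $\int_{\Gamma_\eta}\Phi(m)(t)\,dx\le C(\delta)/|\log(2\eta)|$ uniformly in $m$ and $t$. Combined with the interior $L^q(0,T;W^{1,q}_{loc})$ bounds from Lemma~\ref{compact-fp} and the resulting equicontinuity in time (Aubin--Lions/\cite{Simon}), this gives the needed precompactness. I would make this chain explicit rather than leaving it implicit in the reference to ``Step~3''; the rest of your argument (continuity of $\Phi$ via Corollary~\ref{stability} and uniqueness upgrades of subsequential limits) is sound.
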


\begin{proof}
For each $\eps>0$, we define $\Omega_{\eps}=\{ d(x)>\eps \}$ and $(u_\eps,m_\eps)$ as the solution, in $[0,T]\times \Omega_\eps$, of the mean-field game system
\begin{equation}\label{mfgeps}
\begin{cases}
-\partial_t u_\eps - \mathrm{tr}(a(x)D^2u_\eps)+H(t,x,Du_\eps)=F(x,m_\eps)\,,\qquad(t,x)\in (0,T)\times\Omega_\vep \\
\partial_t m_\eps - \mathrm{div}(a^*(x)Dm_\eps)-\mathrm{div}(m_\eps (H_p(t,x,Du_\eps)+\tilde{b}(x))=0\, \quad (t,x)\in (0,T)\times\Omega_\vep \\
m_\eps(0)=m_0\,, \hspace{2cm} u_\eps(T)=G(x,m_\eps(T))\\
a(x)Du_\eps\cdot\nu_{|\partial\Omega_\eps}=0\hspace{1.5cm}\left[a(x)Dm_\eps + m_\eps(\tilde{b}(x)+H_p(t,x,Du_\eps))\right]\cdot\nu_{|\partial \Omega_\eps}=0\,.
\end{cases} 
\end{equation}
As before, we extend the solutions to the whole of $\Omega$ by setting $u_\eps=m_\eps=0$ in $\Omega\setminus\Omega_\eps$.

By conservation of mass, we have that $\int_{\Omega_\vep} m_\vep(t)=\into\, m_0\, dx$ for all $t\in (0,T)$, and additionally $m_\vep\geq 0$. Then,  assumptions \rife{F}, \rife{G} imply that $F(x,m_\eps)$ and $G(x,m_\eps(T))$ are uniformly bounded.
By maximum principle, we deduce that $\|u_\vep\|_{\infty}$ is uniformly bounded. Applying Lemma \ref{compact-hjb}, we deduce that  there exists a function $u\in L^\infty(Q_T) \cap L^2(0,T; W^{1,2}_{loc}(\Omega))$ and   a subsequence $u_\vep$ converging to $u$ weakly in $L^2(0,T; W^{1,2}(K))$, for all compact sets $K\subset \Omega$. 
Moreover, the assumption upon $G$, the global bound on $u_\vep$ and the natural growth conditions ensure that the sequence $Du_\vep$ is also bounded in any  set $(0,T)\times K$,  for $K$ compact. This allows us to use Lemma \ref{compact-fp} for $m_\vep$.  
In fact, since $H_p(t,x,Du_\vep)$ is locally bounded and converges a.e. to $H_p(t,x,Du)$, and since the invariance condition \rife{invariance} holds, we are in the position to apply the stability result of Proposition \ref{stab-fp} as well. Therefore, we conclude that 
$$
m_\vep \to m \qquad \hbox{in $C^0([0,T]; L^1(\Omega))$}
$$
and $m$ is a solution of \rife{mfg-fp}. Finally, the continuity assumptions upon $F$ and $G$ now imply that  $F(t,x,m_\vep)$  converges almost everywhere  to $F(t,x,m)$ in $Q_T$ and  $G(x,m_\vep(T))$ converges to $G(x,m(T))$ a.e. and therefore in $L^p$ for all $p<\infty$. We have now access to the stability result of Proposition \ref{stab-hjb} and we deduce that the limit function $u$ is a solution of \rife{mfg-hjb}. 
\end{proof}

\subsection{Uniqueness of solutions}
\begin{thm} Suppose the hypotheses of Theorem \ref{existence} are satisfied and, in addition, $F$ and $G$ are nondecreasing with respect to $m$, in the sense of operators. If at least one of the two following conditions holds:
\be\label{strict-mon}
\begin{split}
(i) &\quad  \begin{cases} \into (F(t,x,m_0)-F(t,x,m_1))d(m_0-m_1)=0 \,\,\Rightarrow \,\, F(t,x,m_0)=F(t,x,m_1)\,
& \\
\into (G(x,m_0)-G(x,m_1))d(m_0-m_1)=0 \,\,\Rightarrow \,\, G(x,m_0)=G(x,m_1)
& \end{cases}
\\
\m
(ii) & \quad H(t,x,p_1)-H(t,x,p_2)-H_p(t,x,p_2)(p_1-p_2)=0   \,\,\Rightarrow \,\, H_p(t,x,p_1)=H_p(t,x,p_2)\,.
\end{split}
\ee
then the solution of  \eqref{mfg} is unique.
\end{thm}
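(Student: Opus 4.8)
The plan is to use the classical Lasry–Lions monotonicity argument. Let $(u_1,m_1)$ and $(u_2,m_2)$ be two solutions of \rife{mfg}. The starting point is the duality pairing: one would like to use $u_1-u_2$ as a test function in the Fokker–Planck equation for $m_1-m_2$, and $m_1-m_2$ as a test function in the HJB equation for $u_1-u_2$. To make this rigorous given the weak formulations in Definitions \ref{defhjb}, \ref{deffp} and \ref{sol-mfg}, I would first justify that $u_i$ is an admissible test function in the Fokker–Planck weak formulation: $u_i \in L^\infty \cap L^2_{loc}(W^{1,2})$ and $\mathcal L(u_i) = F(t,x,m_i) - H(t,x,Du_i) + H_p(t,x,Du_j)\cdot Du_i$-type expressions are only locally bounded a priori, so one would need an approximation/exhaustion argument on the domains $D_n$ (or, more simply, invoke that the construction in Section 5 produces solutions as limits of the Neumann problems \rife{mfgeps} and carry out the computation at the $\eps$-level, then pass to the limit using the strong convergences already established). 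Either route gives the integration by parts on $Q_T$.

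\medskip

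Carrying out the pairing, the boundary/terminal terms combine to produce the standard identity
\begin{equation*}
\begin{split}
& \into \big(G(x,m_1(T))-G(x,m_2(T))\big)\,d(m_1(T)-m_2(T)) + \intif \big(F(t,x,m_1)-F(t,x,m_2)\big)\,d(m_1-m_2)\,dt
\\
& \quad + \intif m_2\,\big[ H(t,x,Du_1)-H(t,x,Du_2)-H_p(t,x,Du_2)\cdot(Du_1-Du_2)\big]\,dx\,dt
\\
& \quad + \intif m_1\,\big[ H(t,x,Du_2)-H(t,x,Du_1)-H_p(t,x,Du_1)\cdot(Du_2-Du_1)\big]\,dx\,dt = 0\,.
\end{split}
\end{equation*}
All four terms are nonnegative: the first two by the assumed monotonicity of $G$ and $F$ in $m$, the last two by convexity \rife{convex} of $H$ in $p$ (together with $m_i\ge 0$). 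Hence each term vanishes identically. A key technical point is that the second-order terms $\sum_{ij}\partial^2_{ij}(a_{ij}m)$ paired against $u$ and $\mathrm{tr}(aD^2u)$ paired against $m$ cancel exactly because the matrix $a(x)$ is the same in both equations — this is automatic in the MFG system, but the bookkeeping with the divergence-form rewriting and the drift $\tilde b$ must be done carefully; the $\mathrm{div}(m\tilde b)$ and $\tilde b\cdot Du$ terms must also cancel.

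\medskip

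From the vanishing of the terms, I would conclude $Du_1 = Du_2$ $m_i$-a.e. For this one needs a pointwise argument. Under hypothesis \rife{strict-mon}(ii): wherever the integrand $H(t,x,Du_1)-H(t,x,Du_2)-H_p(t,x,Du_2)(Du_1-Du_2)$ vanishes, one gets $H_p(t,x,Du_1)=H_p(t,x,Du_2)$ there; combining the analogous statement from the other term, on the set $\{m_1>0\}\cup\{m_2>0\}$ one has $H_p(t,x,Du_1)=H_p(t,x,Du_2)$ a.e. Since the drifts of the two Fokker–Planck equations are $H_p(t,x,Du_i)+\tilde b$, and these now agree where either $m_i$ is positive, $m_1$ and $m_2$ solve the \emph{same} linear Fokker–Planck equation with the same initial datum $m_0$; by the uniqueness part of Theorem \ref{exfp} (applied with the common drift, which is locally bounded) we get $m_1=m_2$. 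Then $F(t,x,m_1)=F(t,x,m_2)$ and $G(\cdot,m_1(T))=G(\cdot,m_2(T))$, so $u_1$ and $u_2$ solve the same HJB equation and Theorem \ref{unichjb} gives $u_1=u_2$. Under hypothesis \rife{strict-mon}(i) instead: the vanishing of the first two terms forces $F(t,x,m_1)=F(t,x,m_2)$ a.e.\ and $G(\cdot,m_1(T))=G(\cdot,m_2(T))$; hence $u_1$ and $u_2$ satisfy the same HJB equation and, by Theorem \ref{unichjb}, $u_1=u_2$, whence the two Fokker–Planck equations coincide and Theorem \ref{exfp} yields $m_1=m_2$.

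\medskip

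I expect the main obstacle to be the rigorous justification of the duality pairing: the weak solutions have only local regularity near $\partial\Omega$ and only $L^\infty$ (not $H^1$) global bounds, so the integration by parts cannot be performed directly on $\Omega$. The cleanest fix is to run the whole computation on the penalized Neumann problems \rife{mfgeps} on $\Omega_\eps$ — where everything is smooth and the boundary terms vanish by the Neumann conditions — obtaining the identity with an inequality ($\le 0$ becomes $\le o(1)$, or exact equality modulo controlled remainders), and then pass to the limit $\eps\to 0$ using the strong convergences $u_\eps\to u$ in $L^2_{loc}(W^{1,2})$, $Du_\eps\to Du$ a.e.\ locally, $m_\eps\to m$ in $C^0([0,T];L^1)$ established in the proof of Theorem \ref{existence}, together with Fatou's lemma for the nonnegative convexity/monotonicity terms. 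A secondary delicate point is ensuring the convergence of the terminal coupling terms $\into G(x,m_\eps(T))\,d(m_\eps(T)-\dots)$, which relies on the $C^0([0,T];L^1)$ convergence of $m_\eps$ and the continuity of $G$ from \rife{G}.
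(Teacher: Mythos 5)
Your overall strategy—Lasry--Lions duality at an approximating level, then a limit passage—is the right one and coincides with the paper's, and your handling of the limit, the Fatou argument on the convexity terms, and the endgame under (i) and (ii) is essentially identical to the paper's. But there is a genuine gap in the approximation scheme you propose. You suggest ``invoke that the construction in Section 5 produces solutions as limits of the Neumann problems \rife{mfgeps}'' and then ``run the whole computation on the penalized Neumann problems \rife{mfgeps}''. The problem is that \rife{mfgeps} is the \emph{fully coupled} penalized MFG system: its solutions converge, up to subsequences, to \emph{some} solution of \rife{mfg}, but there is no reason why the limit would be your given pair $(u_1,m_1)$ rather than $(u_2,m_2)$ or a third solution. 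Since we do not yet have uniqueness (that is what we are trying to prove), you cannot identify the $\eps$-limit with an arbitrary prescribed solution.

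The paper circumvents this by using a \emph{decoupled} approximation: given the two solutions $(u,m)$ and $(v,\mu)$, it defines $(u_\eps,m_\eps)$ as the solution of \rife{mfgapprox1}, where the coupling terms are frozen at the \emph{given} $m$, i.e.\ the right-hand side is $F(t,x,m)$ and the terminal datum is $G(x,m(T))$, and where the Fokker--Planck drift uses $H_p(t,x,Du_\eps)$. Now the HJB equation for $u_\eps$ is a stand-alone penalized Neumann problem whose limit is pinned down by the uniqueness Corollary~\ref{stability}: $u_\eps\to u$, the specific function we started from (and then $m_\eps\to m$ via Proposition~\ref{stab-fp}). Similarly $(v_\eps,\mu_\eps)\to(v,\mu)$ from \rife{mfgapprox2}. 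Only then does the duality computation at level $\eps$ make sense and converge to the statement you want for $(u,m)$ and $(v,\mu)$. Your alternative suggestion of an exhaustion argument on the $D_n$'s would face the usual difficulty of uncontrolled boundary terms near $\partial\Omega$ and is not carried out; the decoupled penalized Neumann approximation is what makes the integration by parts clean. The rest of your argument (vanishing of all four monotone terms, then concluding under (i) by forcing $F$ and $G$ to coincide and invoking Theorems~\ref{unichjb} and~\ref{exfp}, or under (ii) by forcing the Fokker--Planck drifts to agree on the supports of $m$ and $\mu$) is sound and matches the paper.
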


\begin{proof}
Let $(u,m)$ and $(v,\mu)$ be two solutions of the mean field game system. We want to prove that $v=u$, $\mu=m$.\\
To do this, we reason as always through approximation. Having defined  $\Omega_\vep$   as in Theorem \ref{existence}, we consider $(u_\eps,m_\eps)$   solution of the problem
\begin{equation}\label{mfgapprox1}
\begin{cases}
-\partial_t u_\eps - \mathrm{tr}(a(x) D^2u_\eps)+{H}(t,x,Du_\eps)=F(t,x,m)\qquad (t,x)\in (0,T)\times \Omega_\vep\\
\partial_t m_\eps - \mathrm{div}(a^*(x) Dm_\eps)-\mathrm{div}(m_\eps (H_p(t,x,Du_\eps)+\tilde{b}(x))=0\qquad (t,x)\in (0,T)\times \Omega_\vep\\
m_\eps(0)=m_0 \hspace{2cm} u_\eps(T)=G(x,m(T))\\
Du_\eps\cdot\nu_{|\partial\Omega_\vep}=0\hspace{1.5cm}\left[\eps Dm_\eps + m_\eps(\tilde{b}(x)+H_p(t,x,Du_\eps))\right]\cdot\nu_{|\partial\Omega_\vep}=0
\end{cases}\,.
\end{equation}
Similarly, $(v_\eps,\mu_\eps)$ will be the solution of the problem
\begin{equation}\label{mfgapprox2}
\begin{cases}
-\partial_t v_\eps - \mathrm{tr}(a(x) D^2v_\eps)+H(t,x,Dv_\eps)=F(t,x,\mu)\qquad (t,x)\in (0,T)\times \Omega_\vep\\
\partial_t \mu_\eps - \mathrm{div}(a^*(x) D\mu_\eps)-\mathrm{div}(\mu_\eps (H_p(t,x,Dv_\eps)+\tilde{b}(x))=0\qquad (t,x)\in (0,T)\times \Omega_\vep\\
\mu_\eps(0)=m_0 \hspace{2cm} v_\eps(T)=G(x,\mu(T))\\
Dv_\eps\cdot\nu_{|\partial\Omega_\vep}=0\hspace{1.5cm}\left[\eps D\mu_\eps + \mu_\eps(\tilde{b}(x)+H_p(t,x,Dv_\eps))\right]\cdot\nu_{|\partial\Omega_\vep}=0\,.
\end{cases} 
\end{equation}
Notice that the equations of $u_\eps$ and $v_\eps$ are \emph{decoupled} from the system, since they do not depend, respectively,  upon $m_\eps$ and $\mu_\eps$. Using Corollary \ref{stability}, we know that $u_\eps\to u$ and $v_\eps\to v$ in $C([0,T];L^p(K))$ and in $L^2([0,T];H^1(K))$, for each compact $K\subset\subset\Omega$.

Using this information, and the local gradient bounds, we know that $H_p(x,Du_\eps)$ and $H_p(x,Dv_\eps)$ are locally bounded sequences which converge, respectively, to $H_p(x,Du)$ and $H_p(x,Dv)$. From Proposition \ref{stab-fp} we deduce that  $m_\eps\to m$ and $\mu_\eps\to\mu$ in $C([0,T];L^1(\Omega))$.

Now we use the classical monotonicity argument in mean field game systems. We estimate in two different ways the quantity
			\begin{equation*}
			\int_0^T \int_{\Omega_\vep}\left((u_\eps-v_\eps)(m_\eps-\mu_\eps)\right)_t\spazio dxdt\spazio.
			\end{equation*}
First, computing directly the time integral we find
\begin{equation*}
\int_0^T \int_{\Omega_\vep}\left((u_\eps-v_\eps)(m_\eps-\mu_\eps)\right)_t\spazio dxdt=  \int_{\Omega_\vep} (G(x,m(T))-G(x,\mu(T)))(m_\eps(T)-\mu_\eps(T))\spazio dx\spazio.
\end{equation*}
Besides, if we  use the weak formulations of $u_\eps$, $v_\eps$, $m_\eps$, $\mu_\eps$, we obtain 
\begin{align*}
&\int_0^T \int_{\Omega_\vep}\left((u_\eps-v_\eps)(m_\eps-\mu_\eps)\right)_t\spazio dxdt=-\int_0^T \int_{\Omega_\vep} (F(x,m)-F(x,\mu))(m_\eps-\mu_\eps)\spazio dxdt-\\-&\int_0^T \int_{\Omega_\vep} m_\eps(H(t,x,Dv_\eps)-H(t,x,Du_\eps)+H_p(x,Du_\eps)(Dv_\eps-Du_\eps))\spazio dxdt-\\-&\int_0^T \int_{\Omega_\vep} \mu_\eps(H(t,x,Du_\eps)-H(t,x,Dv_\eps)+H_p(x,Dv_\eps)(Du_\eps-Dv_\eps))\spazio dxdt\spazio 
\end{align*}
which yields
\begin{align*}
& \int_{\Omega_\vep} \left[G(x,m(T))-G(x,\mu(T))\right] (m_\eps(T)-\mu_\eps(T))\spazio dx + \int_0^T \int_{\Omega_\vep} (F(x,m)-F(x,\mu))(m_\eps-\mu_\eps)\spazio dxdt\\+&\int_0^T \int_{\Omega_\vep} m_\eps(H(t,x,Dv_\eps)-H(t,x,Du_\eps)+H_p(x,Du_\eps)(Dv_\eps-Du_\eps))\spazio dxdt\\+&\int_0^T \int_{\Omega_\vep} \mu_\eps(H(t,x,Du_\eps)-H(t,x,Dv_\eps)+H_p(x,Dv_\eps)(Du_\eps-Dv_\eps))\spazio dxdt\leq 0\,.
\end{align*}
Since $H$ is convex, we can apply Fatou's lemma in the last two integrals. Moreover, using that  (extending the functions to zero outside $\Omega_\vep$) $m_\eps(T)-\mu_\eps(T)\to m(T)-\mu(T)$ in $L^1(\Omega)$, we can pass to the limit in the remaining two integrals. We obtain
\begin{align*}
& \into \left[G(x,m(T))-G(x,\mu(T))\right] (m (T)-\mu (T))\spazio dx + \int_0^T \into (F(x,m)-F(x,\mu))(m -\mu )\spazio dxdt\\+&\int_0^T \into m (H(t,x,Dv )-H(t,x,Du )+H_p(x,Du )(Dv -Du ))\spazio dxdt\\+&\int_0^T \into \mu (H(t,x,Du )-H(t,x,Dv )+H_p(x,Dv)(Du -Dv ))\spazio dxdt\leq 0\,,
\end{align*}
and therefore all integrals must vanish. Now we conclude with assumption \rife{strict-mon}. Indeed, if (i) holds we deduce that 
$$
F(t,x,m)=F(t,x,\mu)\spazio,\hspace{2cm}G(x,m(T))=G(x,\mu(T))\spazio.
$$
This means that $u$ and $v$ solve the same Hamilton-Jacobi-Bellman equation. From Theorem \ref{unichjb}, we know that they coincide. So $v=u$, hence  $H_p(x,Du)=H_p(x,Dv)$. Coming back to the Fokker-Planck equation, we deduce that $\mu=m$ from Theorem \ref{exfp}. Otherwise, if (ii) holds, we proceed in the opposite way: first we deduce that $H_p(x,Du)=H_p(x,Dv)$, and then $\mu=m$, which in turn implies that $u=v$ by uniqueness of the HJB equation.
\end{proof}

\begin{rem}\label{rem-couplings} We stress that the assumption on the final pay-off $G$ can be relaxed in case that $H(t,x,p)$ is globally Lipschitz continuous with respect to $p$. Indeed, in this case the drift term  $H_p(t,x,Du)$ is always bounded and it is not needed that the range of $G$ be bounded in $W^{1,\infty}(\Omega)$. It would be enough, in this case, to require that the range of $G$ is bounded in $\elle\infty$, similar as it is done for the internal coupling $F$. In particular, this condition would include local couplings, i.e.  $G=G(x,r)$ is a bounded real function. 
%
\end{rem}

\section{Further regularity of solutions}

In this Section we get an improvement of regularity for $u$ or $m$ with a suitable strengthening of hypotheses.

\subsection{Lipschitz regularity of the value function}

We follow the classical Bernstein method in order to get gradient bounds for the solution of $u$. The approach is borrowed from \cite{LP} and yields the global Lipschitz character of the value function.

\begin{thm}\label{lipthm}

Assume that $a(x)$ satisfies \rife{deg} and there exists a matrix $\sigma\in W^{1,\infty}(\Omega)$ such that $a(x)=\sigma(x)\sigma(x)^*$. Let $H\in C^1(Q_T\times\mathbb{R}^N)$ satisfy
conditions \rife{convex}-\rife{linder} and, in addition, the following assumption:
\be\label{Hx}
 H_x(t,x,p) \cdot p\geq - C\, (1+ |p|^2)\qquad \forall (t,x)\in Q_T\,,p\in \R^N 
\ee
for some constant $C>0$. Moreover, assume that the invariance condition \rife{invariance} holds true.
Let $F,G$ satisfy \rife{F}, \rife{G} and assume that $m\mapsto F(\cdot ,m)$  has bounded range  in $L^\infty((0,T);W^{1,\infty}(\Omega))$.
Then $ u \in L^\infty((0,T);W^{1,\infty}(\Omega))$.
\end{thm}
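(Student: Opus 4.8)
The plan is to implement the Bernstein method following \cite{LP}: introduce the auxiliary function $w = |\sigma(x)^* Du|^2$ (or a closely related quantity $w = |Du|^2 + \lambda(\text{something})$), derive a parabolic inequality for $w$, and conclude by the maximum principle that $w$ is bounded, which gives the global Lipschitz bound on $u$ in $\Omega$ since $a(x)=\sigma\sigma^*$ is only degenerate at the boundary. The key point that makes this global (rather than merely interior) is the invariance condition \rife{invariance}, which must be used precisely to control the boundary behavior of $w$: the penalized approximations satisfy Neumann conditions on $\partial\Omega_\eps$, and one needs the structure inequality to prevent $w$ from blowing up as $\eps\to 0$.

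Concretely, I would first work on the penalized problems \rife{heps} (or \rife{neumeps}) where $u_\eps$ is smooth enough to differentiate the equation. Differentiating the HJB equation in the direction $x_k$ and contracting against $\partial_k u_\eps$, one gets an equation of the form
\begin{equation*}
-\partial_t w - \mathrm{tr}(a D^2 w) + H_p(t,x,Du)\cdot Dw + 2\,\mathrm{tr}(a\, D^2u\, D^2u) = -2 H_x\cdot Du - (\text{terms with } Da)\cdot(\ldots)\,,
\end{equation*}
where $w=|Du_\eps|^2$ (schematically). The dissipative term $2\,\mathrm{tr}(a D^2u D^2u)$ is the engine: using the Cauchy–Schwarz/Young inequality together with the equation itself (rewriting $\mathrm{tr}(aD^2u) = -\partial_t u + H$) one bounds $|D^2u|$ from below in terms of $|H(t,x,Du)|$, hence in terms of $|Du|^2$ by \rife{quadgrow}; this produces a coercive term $\sim c\,w^2$ (possibly with the local ellipticity constant, which is where one must be careful near the boundary — this is why $a=\sigma\sigma^*$ with Lipschitz $\sigma$ matters, so that $|\sigma^* D^2 u|$ enters rather than $|D^2u|$). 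Against this one estimates the bad terms: $H_x\cdot Du \ge -C(1+|Du|^2)$ by assumption \rife{Hx} (with the right sign so that it appears on the favorable side), the Lipschitz character of $\sigma$ controls the first-order terms by $C(1+w^{3/2})$, absorbed by the quadratic. The terminal datum $w(T) = |DG(x,m(T))|^2$ is bounded by \rife{G}, and the running term $F$ contributes $|DF|\le C$ by the extra hypothesis on $F$. One then gets $-\partial_t w - \mathrm{tr}(aD^2w) + b_\eps\cdot Dw \le C - c\,w^2 \le 0$ wherever $w$ is large, so by the parabolic maximum principle (comparing with a constant, or with a supersolution of ODE type $\phi' = C - c\phi^2$ reading backward in time) $w$ is bounded uniformly in $\eps$.

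The remaining issue is the boundary: on $\partial\Omega_\eps$ we have the Neumann condition $a Du_\eps\cdot\nu = 0$, but this does not immediately give $\partial_\nu w \le 0$; one needs either a barrier argument near $\partial\Omega_\eps$ using the invariance inequality \rife{invariance} to show $w$ cannot attain a large interior-boundary maximum, or — the cleaner route — to redefine the auxiliary function as $\tilde w = e^{-\lambda\theta(d(x))}\,|\sigma^*Du|^2$ or $\tilde w = |\sigma^* Du|^2 + K(M - d(x))$ so that the geometry of the boundary (via $D^2 d$) combines with \rife{invariance} to make $\tilde w$ a genuine subsolution up to $\partial\Omega_\eps$. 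I expect \textbf{this boundary term analysis to be the main obstacle}: one has to match the curvature terms $\mathrm{tr}(aD^2 d)$, the term $\frac{aDd\cdot Dd}{d}$, and the drift $-H_p\cdot Dd$ in exactly the combination furnished by \rife{invariance}, uniformly as $\eps\to 0$ (recalling $d_\eps = d - \eps$ near $\partial\Omega_\eps$, so the invariance structure is preserved with a harmless $O(\eps)$ error as in the proof of Theorem \ref{exfp}). Once the uniform bound $\|Du_\eps\|_{L^\infty((0,T)\times\Omega_\eps)}\le C$ is established, passing to the limit (using Corollary \ref{stability}, which tells us $u_\eps\to u$ and $m_\eps\to m(T)$) and lower semicontinuity of the Lipschitz seminorm yields $u\in L^\infty((0,T);W^{1,\infty}(\Omega))$.
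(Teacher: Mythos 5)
Your plan is in the right spirit (Bernstein method on the penalized Neumann problems, with the invariance condition controlling the boundary behaviour through a suitably weighted auxiliary function), and you correctly single out the boundary analysis as the crux. However, there is a genuine gap in the mechanism you propose for closing the estimate.

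You describe the dissipative term $2\,\mathrm{tr}(a\,D^2u\,D^2u)$ as ``the engine,'' claiming that by inserting the equation $\mathrm{tr}(aD^2u)=-\partial_t u+H$ one extracts a coercive $\sim c\,w^2$ term from it. This would be the classical quadratic-Bernstein argument valid for uniformly elliptic equations, but it breaks down here precisely where you need it. The identity $\mathrm{tr}(aD^2uD^2u)=\sum_k|\sigma^*D(\partial_{x_k}u)|^2$ only gives a lower bound on a \emph{degenerate} quadratic form in $D^2u$; it does not control $|D^2u|$ pointwise near $\partial\Omega$, and $(\mathrm{tr}(aD^2u))^2\le \mathrm{tr}(a)\,\mathrm{tr}(aD^2uD^2u)$ degenerates in the same direction. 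Moreover the constant in \rife{quadgrow} is only locally uniform, so the purported $cw^2$ lower bound is not uniform up to the boundary. The paper's proof never invokes any such quadratic coercivity: the quantity $\mathrm{tr}(\tilde a(u_\eps)D^2u_\eps)-\mathrm{tr}(aD^2u_\eps D^2u_\eps)$ is simply estimated from \emph{above} by $C|Du_\eps|^2$ after completing the square using $a=\sigma\sigma^*$ and $\sigma\in W^{1,\infty}$, and the resulting parabolic inequality for the weighted quantity is only \emph{linear} in $w_\eps$, of the form $-(w_\eps)_t-\mathrm{tr}(aD^2w_\eps)+b_\eps\cdot Dw_\eps-Cw_\eps\le C$, which is already enough for the maximum principle.

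What actually makes the estimate close is the precise choice of weight, which you gesture at but do not construct. The paper takes $w_\eps=|Du_\eps|^2\,e^{\theta(d(x))}$ with $\theta(d)=d^\gamma$, $0<\gamma<1$, so that the zeroth-order coefficient $c_\eps$ of $w_\eps$ in the transformed equation is
\begin{equation*}
c_\eps=\theta'(d)\bigl(-H_p\cdot Dd+\mathrm{tr}(aD^2d)\bigr)+\bigl(\theta''(d)-(\theta'(d))^2\bigr)\,aDd\cdot Dd\,.
\end{equation*}
Inserting the invariance condition \rife{invariance} and using $\theta(d)=d^\gamma$ yields
\begin{equation*}
c_\eps\ \ge\ \Bigl(\tfrac{\theta'(d)}{d}+\theta''(d)-(\theta'(d))^2\Bigr)\,aDd\cdot Dd-Cd\,\theta'(d)
=\gamma^2 d^{\gamma-2}\bigl(1-d^\gamma\bigr)\,aDd\cdot Dd-C\gamma d^\gamma\ \ge\ -C\,,
\end{equation*}
i.e.\ the singular term $aDd\cdot Dd/d^2$ arising from $-(\theta')^2$ and the $\frac{aDd\cdot Dd}{d}$ piece of the invariance inequality exactly recombine into a non-negative quantity for $d$ small, making $c_\eps$ bounded from below uniformly in $\eps$. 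This cancellation is the whole point, and it hinges on the exponent $\gamma<1$; without carrying it out the proof does not close. So: right framework, but the $w^2$ device is a dead end here, and the specific weight $e^{d^\gamma}$ with its cancellation is the missing step.
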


\begin{proof}
Let $\Omega^\vep= \{x\,:d(x) >\vep\}$. We consider $u_\eps$ solution of the problem
\begin{equation}\label{hjbeps2}
\begin{cases}
-(u_\eps)_t-\mathrm{tr}( a(x) D^2u_\eps)+H(t,x,Du_\eps) =F(x,m)\,,\qquad (t,x)\in (0,T)\times \Omega^\vep\\
u_\eps(T)=G(x,m(T))\\
Du_\eps\cdot\nu_{|\partial\Omega^\vep}=0\,.
\end{cases}\spazio.
\end{equation}
We know from Proposition \ref{altra-app-hjb} that $u_\eps\to u$ when $\eps\to 0$, with $Du_\eps\to Du$ a.e. in $ Q_T$.

Let us set $w_\eps:=|Du_\eps|^2e^{\theta(d(x))}$, where $\theta\in C^2(0,1)$ is a bounded function to be  defined later.
Computing the derivatives of $w_\eps$, we find:
\begin{align*}
Dw_\eps&= e^{\theta(d)}\left(2Du_\eps D^2u_\eps+|Du_\eps|^2\theta'(d)Dd\right)\spazio;\\	
D^2w_\eps&=\etd\left(2D^2u_\eps D^2u_\eps+2Du_\eps D^3u_\eps+|Du_\eps|^2\theta'(d)D^2d+4\theta'(d)D^2u_\eps Du_\eps\otimes Dd \right)+\\&+\etd\, |Du_\eps|^2 [\theta''(d)+ (\theta'(d))^2] Dd \otimes Dd \spazio
\end{align*}
where $Du_\eps D^3 u_\eps=\sum\limits_{k}(u_\eps)_{x_k}(u_\eps)_{x_i x_j x_k}$. 

Thus when we form the equation for $w_\vep$ we obtain (see also \cite[Lemma 8]{LP})
%
%
\begin{equation*}
-(w_\eps)_t-\mathrm{tr}( a(x) D^2w_\eps)+ b_\eps(t,x)Dw_\eps+c_\eps(t,x)w_\eps=r_\eps(t,x)\spazio,
\end{equation*}
where
\begin{align*}
b_\eps(t,x)&= H_p(x,Du_\eps)+2\theta'(d)\left(a(x)Dd\right);\\
c_\eps(t,x)&= \theta'(d)\left(-H_p(x,Du_\eps)\cdot Dd +\mathrm{tr}(a(x) D^2d)\right) +\left(\theta''(d)-\left(\theta'(d)\right)^2 \right)\, a(x) Dd\cdot Dd ;\\
r_\eps(t,x)&= 2\etd \left(\mathrm{tr}(\tilde{a}(u_\eps)D^2u_\eps)-\mathrm{tr}(a(x)D^2u_\eps D^2u_\eps)-  H_x(x,Du_\eps)\cdot Du_\eps+ DF\,\cdot Du_\vep \right)
\end{align*}
and we denoted  $\tilde{a}(u_\eps)_{i,j}=\sum\limits_{k}(a_{i,j}(x))_{x_k}(u_\eps)_{x_k}$.
\vskip1em
First we estimate the quantity $\mathrm{tr}(\tilde{a}(u_\eps)D^2u_\eps)-\mathrm{tr}(a(x) D^2u_\eps D^2u_\eps)$. Here, since  $a(\cdot)=\sigma(\cdot)\sigma(\cdot)^*$, using Young's inequality we get
\begin{align*}
&\mathrm{tr}(\tilde{a}(u_\eps)D^2u_\eps)-\mathrm{tr}(a(x) D^2u_\eps D^2u_\eps)=\\
=&\sum\limits_{i,j,k}(a_{ij})_{x_k}(u_\eps)_{x_k}(u_\eps)_{x_i x_j}-\sum\limits_{i,j,k}a_{ij}(u_\eps)_{x_j x_k}(u_\eps)_{x_i x_k}=\\=&
2\sum\limits_{i,j,k,l}(\sigma_\eps)_{jl}((\sigma_\eps)_{il})_{x_k}(u_\eps)_{x_k}(u_\eps)_{x_i x_j}-\sum\limits_{k}\left|\sigma^{*}_\vep D(u_\eps)_{x_k}\right|^2\le C|Du_\eps|^2\spazio.
\end{align*}
Using also \rife{Hx} and the condition on $F$, we estimate
$$
r_\eps \leq C\, \etd \left( 1+|Du_\eps|^2\right)\,.
$$
Therefore, we have that $w_\eps$ satisfies
\begin{equation*}
-(w_\eps)_t-\mathrm{tr}(a(x) D^2w_\eps)+b_\eps(t,x)Dw_\eps+(c_\eps(t,x)-C)w_\eps\le C,
\end{equation*}
for a suitable $C>0$. 

Now we estimate $c_\vep$ thanks to the invariance condition \rife{invariance}. Indeed, if $d(x)<\de_0$, we get
$$
c_\eps  \geq  \left(\frac{\theta'(d)}d+\theta''(d)-\left(\theta'(d)\right)^2 \right)\, a(x) Dd\cdot Dd - C\, d\, \theta'(d)\,.
$$
Choosing $\theta(d)=d^\gamma$, with $\gamma\in(0,1)$, we get
$$
c_\eps  \geq  \gamma\, d^{\gamma-1} \left(\frac1d+(\ga-1) d^{-1}-\gamma\, d^{\gamma-1} \right)\, a(x) Dd\cdot Dd - C\, \ga\,d^{\ga}\,
$$
hence $c_\vep$ is uniformly bounded below. If $d(x)\geq \de_0$, we recall that $u$ is Lipschitz by elliptic regularity, so $c_\vep$  is also bounded from below by a  constant possibly dependent on $\de_0$, but independent from $\eps$. We conclude that
$w_\vep$ satisfies
$$
-(w_\eps)_t-\mathrm{tr}(a(x) D^2w_\eps)+b_\eps(t,x)Dw_\eps -C \, w_\eps\le C\,.
$$
Since the maximum of $w_\vep$ cannot be taken on the boundary due to the Neumann condition (see e.g. \cite[Lemma 4]{LP}), and since at $t=T$ we use the Lipschitz bound on $G(\cdot, m(T))$, we conclude applying the maximum principle that 
the maximum of $Dw_\eps$ is uniformly bounded in $\eps$. As $\vep\to 0$, this implies that $Du\in L^\infty(Q_T)$.
\end{proof}

\begin{rem} We stress that the above proof may admit some variants which possibly apply to 
other interesting cases. For instance, assume that  the invariance condition is strengthened as follows:
\begin{equation}\label{dtheta}
\mathrm{tr}(a(x)D^2d(x))-H_p(x,p)Dd(x)\ge \frac{a(x) Dd(x) \cdot Dd(x)}{d(x)^{1+\rho}}- C d(x)\,,
\end{equation}
for some $\rho>0$, and in addition that $a(\cdot)=\sigma(\cdot)\sigma(\cdot)^*$ with 
\be\label{sig}
|D\sigma(x)|^2 \leq c_0 + c_1 |\sigma(x)Dd(x)|^2\, d(x)^{\gamma-2-\rho}
\ee
for some $\gamma>0$.  Then the conclusion of Theorem \ref{lipthm} remains true, and the proof can be easily modified accordingly. 

In particular, whenever \rife{dtheta} is satisfied,  this generalization includes the case  that $\sigma(x)$ is $\frac12$-H\"older continuous with $|\sigma(x)Dd(x)|\geq c\, \, d(x)^{\frac12}$ and $|D\sigma(x)|\leq C\, d(x)^{-\frac12}$, in which case \rife{sig} holds for any $\gamma<\rho$. Otherwise, \rife{dtheta}--\rife{sig} are satisfied if $\sigma(x)$ is $\beta$-H\"older continuous, $\beta>\frac12$, $|\sigma(x)Dd(x)|\geq c\, \, d(x)^{\beta}$ and $|D\sigma(x)|\leq C\, d(x)^{\beta-1}$ and the Hamiltonian satisfies, in a neighborhood of the boundary, that
$$
\mathrm{tr}(a(x)D^2d(x))-H_p(x,p)Dd(x)\ge c\, d(x)^{\eta}
$$
for some $\eta<2\beta-1$. An assumption of this kind appears for instance in \cite{BCR}, \cite{CCR}.
\end{rem}

\begin{rem}
An assumption as \rife{Hx} may not allow the application  to Hamiltonians with super linear growth and inhomogeneous coefficients. However, we point out that  more general conditions on the growth of the Hamiltonian could still lead to the Lipschitz bound, exactly as it is done in \cite[Theorem 4]{LP}. The strategy in that case is to use a change of unknown (typically of exponential type) in addition to the usual Bernstein method. However this leads to an increase of technicalities which we decided to omit here, for the sake of brevity. 
\end{rem}

\subsection{Semiconcavity of the value function}

If we require  stronger assumptions, we can also prove  a semi-concavity bound on $u$. This will be helpful to improve the regularity of $m$ under suitable assumptions.\\

We recall that a function $f$ is said to be \emph{semiconcave} in $\Omega$ if $\exists \,C>0$ such that
\begin{align}
\label{semiconcave}
f(x+h)+f(x-h)-2f(x)\le C|h|^2\spazio,
\end{align}
for each $x\in\Omega$, $h\in\R^N$ such that $x+h, x-h\in\Omega$.
			
In order to prove that $u$ is semi concave, we will follow the tripling variable method used in \cite{ishii}. To this purpose,  we define the following function, that will play a crucial role:
$$
\psi(x,y,z)=|x-z|^4+|y-z|^4+2|x+y-2z|^2\spazio.
$$
Then the semi-concavity of $f$ is true if the following relation holds:
$$
f(x)+f(y)-2f(z)\le C\sqrt{\psi(x,y,z)}\spazio,\hspace{1.5cm}\forall x,y,z\in\Omega\spazio.
$$
Indeed, it suffices to take $x=x'+h$, $y=x'-h$, $z=x'$ to obtain \eqref{semiconcave}. We also recall that an equivalent formulation of this latter condition is the following: there exists a constant $C>0$ such that 
\begin{align*}
f(x)+f(y)-2f(z)\le C\left(\delta+\frac{\psi(x,y,z)}{\delta}\right)\quad  \forall \delta>0\, \, \forall x,y,z\in\Omega\spazio. 
\end{align*}
Moreover, it is well-known that a function $f$ is in $W^{2,\infty}(\Omega)$ if and only if $\exists C>0$ such that
$$
|f(x)+f(y)-2f(z)|\le C\sqrt{\psi}\spazio,
$$
or, equivalently, $\forall \delta>0$
\begin{align}\label{w2inf}
|f(x)+f(y)-2f(z)|\le C\left(\delta+\frac{\psi(x,y,z)}{\delta}\right)\spazio.
\end{align}
\begin{thm}\label{usemiconc}
Suppose the hypotheses of Theorem \ref{lipthm} are satisfied. Moreover, suppose that $a(x)= \sigma(x)\sigma(x)^*$, with $\sigma\in{W}^{2,\infty}(\Omega)$ and that  $F(\cdot,m)\in W^{2,\infty}(\Omega)$, $G(\cdot,m)\in W^{2,\infty}({\Omega})$ uniformly with respect to $m$. 
Finally, we require the following hypothesis on $H$: there exist constants $C_0, C_1$ such that
\be\label{hypH}
\begin{split}
H(t,x,p) & +H(t,y,q)-2H\left(t,z,\frac{p+q}2\right)   \geq 
\\
& \quad  - C_0 (|x-z|^2+ |y-z|^2+|x+y-2z| ) (1+ |p+q| ) -C_1  \,|x-y|\, |p-q|  
\end{split}
\ee				
for any $(x,y,z)\in\Omega$, $(p,q)\in\mathbb{R}^N$, $t\in (0,T)$.
 \vskip0.4em
Then $u(t,\cdot)$ is a semiconcave function for all $t\in[0,T]$, with a semiconcavity constant bounded uniformly for $t\in (0,T)$. Namely,  we have
$$
D^2u(t) \leq M\qquad \forall t\in (0,T)\,,
$$
where $M$ depends on $T$, $\|D^2\sigma\|_\infty, \|D^2 F\|_\infty, \|D^2 G\|_\infty$ and on $H$ (through the constants appearing in the growth  conditions).
\end{thm}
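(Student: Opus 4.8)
The plan is to carry out Ishii's triple-variable argument on a smooth approximation of $u$. By Proposition~\ref{altra-app-hjb} and Corollary~\ref{stability}, $u$ is the limit (in $L^2_{loc}(0,T;H^1)$, with a.e.\ convergence of the gradients) of the solutions $u_\eps$ of the Neumann problem \eqref{hjbeps2} on $\Omega^\eps=\{d>\eps\}$; up to a further mollification of the data we may take the $u_\eps$ of class $C^{2,1}$ in the interior, and Theorem~\ref{lipthm} gives $|Du_\eps|\le L$ uniformly in $\eps$. It is then enough to prove, uniformly in $\eps$, that $u_\eps(t,x)+u_\eps(t,y)-2u_\eps(t,z)\le M(\delta+\psi(x,y,z)/\delta)$ for every $\delta>0$, and to let $\eps\to0$.

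\textbf{The auxiliary function.} Fix $\delta>0$ and, on $[0,T]\times(\overline{\Omega^\eps})^3$, set
\[
\Phi(t,x,y,z)=u_\eps(t,x)+u_\eps(t,y)-2u_\eps(t,z)-\lambda e^{\gamma(T-t)}\Big(\delta+\frac{\psi(x,y,z)}{\delta}\Big)-\mu\big(\Theta(x)+\Theta(y)+\Theta(z)\big),
\]
where $\gamma,\lambda>0$ will be chosen independently of $\eps,\mu,\delta$ and $\Theta$ is a boundary weight near $\partial\Omega^\eps$ of the type used in the proofs of Theorems~\ref{unichjb} and~\ref{lipthm}, designed so that the maximum of $\Phi$ is attained at a point $(\bar t,\bar x,\bar y,\bar z)$ with $\bar x,\bar y,\bar z$ interior to $\Omega^\eps$. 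If $\bar t=T$, then $u_\eps(T,\cdot)=G(\cdot,m(T))\in W^{2,\infty}$ gives $u_\eps(T,\bar x)+u_\eps(T,\bar y)-2u_\eps(T,\bar z)\le C\sqrt\psi\le C(\delta+\psi/\delta)$ and we are done. Otherwise $\bar t\in[0,T)$ and we use the first order conditions (noting $\psi_x+\psi_y+\psi_z\equiv0$, hence $Du_\eps(\bar x)+Du_\eps(\bar y)-2Du_\eps(\bar z)=\mu(D\Theta(\bar x)+D\Theta(\bar y)+D\Theta(\bar z))$), the second order matrix inequality $D^2_{(x,y,z)}\Phi(\bar t,\cdot)\le0$, and the time inequality $-\partial_t\big(u_\eps(\bar x)+u_\eps(\bar y)-2u_\eps(\bar z)\big)\ge\gamma\lambda e^{\gamma(T-\bar t)}(\delta+\psi/\delta)$.

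\textbf{Combining the equations.} Writing \eqref{hjbeps2} at $(\bar t,\bar x)$, $(\bar t,\bar y)$ and subtracting twice the one at $(\bar t,\bar z)$, we estimate three groups of terms. The source is controlled by $\|D^2F\|_\infty$: $F(\bar t,\bar x,m)+F(\bar t,\bar y,m)-2F(\bar t,\bar z,m)\le C\sqrt\psi$. For the Hamiltonian, add and subtract $2H(\bar t,\bar z,\tfrac12(Du_\eps(\bar x)+Du_\eps(\bar y)))$: the first piece is bounded below by \eqref{hypH}, using $|Du_\eps(\bar x)+Du_\eps(\bar y)|\le 2L$, the bounds $|\bar x-\bar z|^2,|\bar y-\bar z|^2,|\bar x+\bar y-2\bar z|\le C\sqrt\psi$, and $|\bar x-\bar y|\,|Du_\eps(\bar x)-Du_\eps(\bar y)|\le C\lambda e^{\gamma(T-\bar t)}\psi/\delta+C\mu(\cdots)$ (from the first order conditions and $|\psi_x-\psi_y|\le C\psi^{3/4}$), while the second piece equals $H_p(\bar t,\bar z,\cdot)\cdot\tfrac\mu2(D\Theta(\bar x)+D\Theta(\bar y)+D\Theta(\bar z))$. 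For the principal part we test $D^2_{(x,y,z)}\Phi(\bar t,\cdot)\le0$ against the triples $(\sigma_k(\bar x),\sigma_k(\bar y),\sigma_k(\bar z))$, with $\sigma_k$ the columns of $\sigma$, and sum over $k$: since $a=\sigma\sigma^*$, the left side is exactly $\mathrm{tr}(a(\bar x)D^2u_\eps(\bar x))+\mathrm{tr}(a(\bar y)D^2u_\eps(\bar y))-2\mathrm{tr}(a(\bar z)D^2u_\eps(\bar z))$, and the right side is $\lambda e^{\gamma(T-\bar t)}\delta^{-1}\sum_k D^2\psi[(\sigma_k(\bar x),\sigma_k(\bar y),\sigma_k(\bar z))]$ plus $\mu$-terms from $D^2\Theta$; using $\sigma\in W^{2,\infty}$ and the algebraic structure of $\psi$ (the fourth powers and the $|x+y-2z|^2$ term make $\sigma_k(\bar x)+\sigma_k(\bar y)-2\sigma_k(\bar z)=O(|\bar x+\bar y-2\bar z|+|\bar x-\bar z|^2+|\bar y-\bar z|^2)$ and $\sigma_k(\bar x)-\sigma_k(\bar z)=O(|\bar x-\bar z|)$), one gets $\sum_k D^2\psi[\cdots]\le C\psi$. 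Collecting everything, every residual carrying a factor $\mu$ is, after using the first order conditions, $\mu$ times a combination of $\mathrm{tr}(aD^2d)-H_p\cdot Dd-\frac{aDd\cdot Dd}{d}$ near $\partial\Omega^\eps$ and of the curvature of $\Theta$, which is absorbed thanks to the invariance condition \eqref{invariance} and the choice of $\Theta$, exactly as in Theorems~\ref{unichjb} and~\ref{lipthm}. What remains is $-\partial_t(u_\eps\text{-combination})\le C(\delta+\psi/\delta)+C\lambda e^{\gamma(T-\bar t)}\psi/\delta+o_\mu(1)$. Comparing with the time inequality we fix first $\gamma$ larger than the constant multiplying $\lambda e^{\gamma(T-\bar t)}\psi/\delta$ and then $\lambda$ large, reaching a contradiction unless $\Phi(\bar t,\cdot)\le0$. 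Hence $\Phi\le0$ on its domain; letting $\mu\to0$ and then $\eps\to0$ gives $u(t,x)+u(t,y)-2u(t,z)\le\lambda e^{\gamma T}(\delta+\psi/\delta)$ for all $\delta>0$, i.e.\ $D^2u(t,\cdot)\le M:=\lambda e^{\gamma T}$, with $M$ depending only on $T$, $\|D^2\sigma\|_\infty$, $\|D^2F\|_\infty$, $\|D^2G\|_\infty$ and the growth constants of $H$.

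\textbf{Main obstacle.} The delicate point is the simultaneous handling, at the maximum point, of the coupled second order terms and of the boundary penalization. Turning the scalar second order maximum inequality into a workable bound for the trace combination $\mathrm{tr}(aD^2u_\eps)$ requires exploiting both the $W^{2,\infty}$ regularity of $\sigma$ and the precise form of $\psi$, and this has to coexist with letting $\bar x,\bar y,\bar z$ approach $\partial\Omega^\eps$, where $D\Theta$, $D^2\Theta$ and $\frac{aDd\cdot Dd}{d}$ all degenerate; this is exactly why the mixed term $C_1|x-y|\,|p-q|$ appears in \eqref{hypH} (the arguments of $H$ produced by the tripling agree only up to $O(\psi^{1/4})$ in space and $O(\psi^{3/4})$ in the gradient), and why $\Theta$ must be tuned against \eqref{invariance}. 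Verifying that, once all these estimates are in place, every term not of the form $C(\delta+\psi/\delta)+o_{\mu,\eps}(1)$ carries a favourable sign is the technical core of the proof.
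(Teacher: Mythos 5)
Your strategy coincides with the paper's: tripling variables with the quartic $\psi$, using the block matrix built from $\sigma$ (your ``test against $(\sigma_k(\bar x),\sigma_k(\bar y),\sigma_k(\bar z))$'' is exactly the paper's $\Sigma$ in \eqref{matrix}), structure hypothesis \eqref{hypH} for the Hamiltonian, and a logarithmic boundary penalization controlled by the invariance condition. The difference is packaging: the paper runs the argument directly on $v=e^{-k(T-t)}u$ in $\Omega$ in the viscosity sense (Jensen's lemma), whereas you run it on the classical approximations $u_\eps$ on $\Omega^\eps=\{d>\eps\}$ and pass to the limit. That alternative is in principle legitimate, but it introduces a real gap that your sketch does not resolve. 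If $\Theta$ blows up at $\partial\Omega^\eps$ (so the maximum is interior), then $D\Theta\sim -Dd_\eps/d_\eps$ with $d_\eps=d-\eps$, and the cancellation you need reads
\begin{equation*}
\mathrm{tr}(aD^2 d_\eps)-H_p\cdot Dd_\eps \;\ge\; \frac{aDd_\eps\cdot Dd_\eps}{d_\eps}-Cd_\eps\,.
\end{equation*}
Since $Dd_\eps=Dd$, $D^2d_\eps=D^2d$ and $d_\eps<d$, this is \emph{strictly stronger} than the invariance condition \eqref{invariance}, which only provides the analogous lower bound with $d$ (not $d_\eps$) in the denominator; near $\partial\Omega^\eps$ the discrepancy $\frac{1}{d_\eps}-\frac{1}{d}=\frac{\eps}{d\,d_\eps}$ blows up, so the term is not ``absorbed exactly as in Theorems \ref{unichjb} and \ref{lipthm}''. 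Conversely, if you base $\Theta$ on $d$ (so the invariance estimate applies), then $\Theta$ is bounded on $\Omega^\eps$, the maximum can sit on $\partial\Omega^\eps$, and a separate Hopf-type argument using the Neumann condition is needed before the interior second-order inequality can be invoked — a step your sketch omits. Also, Theorem \ref{lipthm} does not itself assert a bound on $|Du_\eps|$ uniform in $\eps$; to use such a bound you must point out that it is what the Bernstein argument actually produces along the approximation.

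The paper's choice to work directly on $\Omega$ (and pay the small price of the viscosity/Jensen machinery) is precisely to avoid this mismatch between $\partial\Omega^\eps$ and $\partial\Omega$: with $\varphi=M(\delta+\psi/\delta)-\gamma\log(d(x)d(y)d(z))$ the maximum is automatically interior to $\Omega$, the invariance condition applies with the correct $d$, and the final passage is just $\gamma\to 0$. Everything else in your sketch — the $W^{2,\infty}$ estimates on $\sigma$, $F$, $G$, the split of the Hamiltonian into a $\psi$ part handled by \eqref{hypH} and a $\log d$ part handled by invariance, the Young-type bound $|x-y|\,|D_x\psi-D_y\psi|\lesssim\psi$ — aligns with the paper's computation and is fine.
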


\begin{proof}
We closely follow the proof given  in  \cite[Theorem VII.3]{ishii} with two main novelties:  the boundary contribution, which will be handled through the invariance condition, and the structure condition \rife{hypH}  rather than the case of pure  Bellman operators.  

In the end, we wish to prove that there exist, $ M>0$ such that
\begin{align}\label{semicu}
u(t,x)+u(t,y)-2u(t,z)\le M\left(\delta+\frac{1}{\delta}\psi(x,y,z)\right)\spazio,
\end{align}
for every $ t\in[0,T]$, every  $x, y, z\in\Omega$ and for any $\de>0$ sufficiently small.
\\
For a given $k>0$ we consider the function $v(t,x)=e^{-k(T-t)}u(t,x)$. It satisfies the parabolic equation
				\begin{equation}\label{eqv}
				\begin{cases}
				-v_t-\mathrm{tr}(a(x)D^2v)+\tilde{H}(t,x,Dv)+kv=\tilde{F}(t,x,m)\\
				v(T)=G(x,m(T))\,,
				\end{cases} 
				\end{equation}
				with
				$$
				\tilde{H}(t,x,p)=e^{-k(T-t)}H(t,x,e^{k(T-t)}p)\spazio,\hspace{1cm}\tilde{F}(t,x,m)=e^{-k(T-t)}F(x,m)\spazio.
				$$
We note that $\tilde{H}$ satisfies \eqref{hypH} and \eqref{invariance} uniformly in $t$, $k$, and that $\tilde{F}(t,\cdot,m)\in W^{2,\infty}(\Omega)$ uniformly in $t$.
For $(\gamma,\delta,M)\in(0,+\infty)^3$ we take the following function:
$$
\varphi(x,y,z)=M\left(\delta+\frac{\psi(x,y,z)}{\delta}\right)-\gamma\log(d(x)d(y)d(z)) 
$$			
where, without loss of generality, we assume that $d(x)\leq 1$ in $\Omega$. Hence $\log(d(x)d(y)d(z))\leq 0$. 
As usual, we assume  that 
\begin{align*}
\sup\limits_{[0,T]\times\Omega^3}\mathlarger{(}v(t,x)+v(t,y)-2v(t,z)-\varphi(x,y,z)\mathlarger{)}>0\,,
\end{align*}	
and we will reach a contradiction if $k,M$ are sufficiently large and $\gamma$ sufficiently small, independently of the choice of $\de$.

Since $\phi(x,y,z)=+\infty$ if one of $x$, $y$, $z$ lies in $\partial\Omega$, the $\sup$ is attained at a point $(\bar{t},\bar{x},\bar{y},\bar{z})= (\bar{t}_{\delta,M,\gamma, k},\bar{x}_{\delta,M,\gamma, k},\bar{y}_{\delta,M,\gamma, k},\bar{z}_{\delta,M,\gamma,k})\in[0,T]\times\Omega^3$. We drop the indexes for simplicity of notation. 
We observe further that, if $\bar{t}=T$, then we have
\begin{align*}
v(t,x)+v(t,y)&-2v(t,z)-\varphi(x,y,z)= \\ &=  G(x,m(T))+G(y,m(T))-2G(z,m(T))-\varphi(x,y,z)\spazio,
\end{align*}
which implies, thanks to the regularity of $G$,
\begin{align*}
v(t,x)+v(t,y)&-2v(t,z)\le (C-M)\left(\delta+\frac{\psi({x},{y},{z})}{\delta}\right)\leq 0
\end{align*}
provided  $M\ge C$. So the sup must be attained at $\bar t<T$. 

Now we proceed with typical viscosity solutions' arguments. Indeed, standing on the uniqueness result, it is easy to see that $v$ is also a viscosity solution. It may actually be  the case that $v$ is smooth inside the domain, but we prefer to keep the argument in viscosity sense for a possibly wider generality. 
By Jensen's lemma, there exists matrices $X,Y,Z$ and scalars $a,b,c$ such that
\be\label{matrix}
\begin{pmatrix}
 X & 0   & 0  \\
\noalign{\medskip}
0 & Y  & 0
 \\
\noalign{\medskip}
0& 0& Z
\end{pmatrix}
\leq D^2 \vfi(\bar x, \bar y, \bar z)
\ee
and
\begin{align*}
-a-\mathrm{tr}(a(\bar x) X)+\tilde{H}(\bar t,\bar x,D_x \vfi)+kv(\bar t, \bar x) \leq \tilde{F}(\bar t,\bar x,m(\bar t))
\\
\m
-b-\mathrm{tr}(a(\bar y) Y)+\tilde{H}(\bar t,\bar y,D_y \vfi)+kv(\bar t, \bar y) \leq \tilde{F}(\bar t,\bar y,m(\bar t))
\\
\m
-  c-\mathrm{tr}\left(a(\bar z) \left(-\frac12 Z\right) \right)+\tilde{H}(\bar t,\bar z, -\frac12 D_z \vfi)+kv(\bar t, \bar z) \geq \tilde{F}(\bar t,\bar z,m(\bar t))
\end{align*}
where $\vfi$ is computed at $(\bar t, \bar x, \bar y, \bar z)$ and where $a,b,c$ (the {\it time derivatives} in viscosity sense)   are real numbers such that $a+b\leq 2c$.
We multiply by $2$ the  latter inequality, we sum  and we get
\begin{align*}
& -\mathrm{tr}(a(\bar x) X+a(\bar y) Y + a(\bar z)  Z) + 
k \{ v(\bar t, \bar x)+ v(\bar t, \bar y)- 2v(\bar t, \bar z) \} 
\\
& \quad 
+ \hat{H}(\bar t,\bar x,\bar y,\bar z)
\leq \hat{F}(\bar t,\bar x,\bar y,\bar z)
\end{align*}
where 
\begin{align*}
\hat{H}(t,x,y,z)=\tilde{H}(t,x,D_x\vfi(t,x))+\tilde{H}(t,y,D_y\vfi(t,y))-2\tilde{H}(t,z,-\frac12 D_z\vfi(t,z))\spazio,\\
\hat{F}(t,x,y,z)=\tilde{F}(t,x,m(t))+\tilde{F}(t,y,m(t))-2\tilde{F}(t,z,m(t))\spazio.
\end{align*}
We multiply inequality \rife{matrix} by the matrix
$\Sigma=\Sigma(x,y,z)$, which is defined (in blocks) as 
				\[
				\Sigma(x,y,z)=
				\begin{pmatrix}
				\sigma(x)\sigma^*(x) & \sigma(x)\sigma^*(y) & \sigma(x)\sigma^*(z)\\
				\sigma(y)\sigma^*(x) & \sigma(y)\sigma^*(y) & \sigma(y)\sigma^*(z)\\
				\sigma(z)\sigma^*(x) & \sigma(z)\sigma^*(y) & \sigma(z)\sigma^*(z)
				\end{pmatrix}\spazio,
				\]
so we estimate
$$
\mathrm{tr}(a(\bar x) X+a(\bar y) Y + a(\bar z)  Z) \leq \mathrm{tr}\left(\Sigma(\bar{x},\bar{y},\bar{z}) D^2\varphi(\bar{x},\bar{y},\bar{z})\right)\,.
$$
We also estimate, since $	\bar{t},\bar{x},\bar{y},\bar{z}$ is a maximum point and the maximum is positive
$$
v(\bar t, \bar x)+ v(\bar t, \bar y)- 2v(\bar t, \bar z) \geq \varphi(\bar{x},\bar{y},\bar{z})\,.
$$			
Finally, we deduce that $\vfi$ satisfies
\begin{align*} 
k\varphi(\bar{x},\bar{y},\bar{z})\le \hat{F}(t,x,y,z)+\mathrm{tr}\left(\Sigma(\bar{x},\bar{y},\bar{z}) D^2\varphi(\bar{x},\bar{y},\bar{z})\right)-\hat{H}(\bar{t},\bar{x},\bar{y},\bar{z})\spazio.
\end{align*}
Using the $W^{2,\infty}$ regularity of $F$, which therefore satisfies \rife{w2inf}, we get
\begin{align}\label{apfuma}
k\varphi(\bar{x},\bar{y},\bar{z})\le C\conc+\mathrm{tr}\left(\Sigma(\bar{x},\bar{y},\bar{z}) D^2\varphi(\bar{x},\bar{y},\bar{z})\right)-\hat{H}(\bar{t},\bar{x},\bar{y},\bar{z})\spazio.
\end{align}
We analyse the two latter terms. From now on, we will omit the dependences from $(\bar{t},\bar{x},\bar{y},\bar{z})$ when there will be no possible mistake. We have
				\begin{align*}
				\mathrm{tr}(\Sigma D^2\varphi)=\frac M\delta\mathrm{tr}(\Sigma D^2\psi) -\gamma \mathrm{tr}\left(\Sigma D^2\left(\log(d(x)d(y)d(z))\right)_{|(x,y,z)=(\bar{x},\bar{y},\bar{z})}\right)\spazio.
				\end{align*}
				So, we start computing the Hessian matrix of the function $\psi$. We get
\begin{align}\label{lederivatedipsi}
D_{x}\psi=4|\bar{x}-\bar{z}|^2(\bx-\bz)+4(\bx+\by-2\bz)\spazio,\\
D_y\psi=4|\by-\bz|^2(\by-\bz)+4(\bx+\by-2\bz)\spazio,\\
D_z\psi=-4|\bx-\bz|^2(\bx-\bz)-4|\by-\bz|^2(\by-\bz)-8(\bx+\by-2\bz)\spazio,
\end{align}
and so
\begin{align*}
&D^2_{xx}\psi=8(\bx-\bz)\otimes(\bx-\bz)+4|\bx-\bz|^2I+4I\spazio,\hspace{1cm}D^2_{xy}\psi=4I\spazio,\\
& D^2_{xz}\psi=-8(\bx-\bz)\otimes(\bx-\bz)-4|\bx-\bz|^2I-8I\spazio,\\
&D^2_{yy}\psi=8(\by-\bz)\otimes(\by-\bz)+4|\by-\bz|^2I+4I\spazio,\\
&D^2_{yz}\psi=-8(\by-\bz)\otimes(\by-\bz)-4|\by-\bz|^2I-8I\spazio,\\
&D^2_{zz}\psi=8(\bx-\bz)\otimes(\bx-\bz)+4|\bx-\bz|^2I+8(\by-\bz)\otimes(\by-\bz)+4|\by-\bz|^2I+16I\spazio,
\end{align*}
Therefore, computing the first trace we found
\begin{align*}
				\frac M\delta\mathrm{tr}(\Sigma D^2\psi)=\spazio&4\frac M\delta\mathrm{tr}((\sigma(\bx)+\sigma(\by)-2\sigma(\bz))(\sigma^*(\bx)+\sigma^*(\by)-2^*\sigma(\bz))+\\+\spazio&4\frac M\delta|\bx-\bz|^2\,\mathrm{tr}\left((\sigma(\bx)-\sigma(\bz))(\sigma^*(\bx)-\sigma^*(\bz))\right)+\\+\spazio&4\frac M\delta|\by-\bz|^2\, \mathrm{tr}\left((\sigma(\by)-\sigma(\bz))(\sigma^*(\by)-\sigma^*(\bz))\right)+\\+\spazio&8\frac M\delta|(\sigma^*(\bx)-\sigma^*(\bz))(\bx-\bz)|^2+8\frac M\delta|(\sigma^*(\by)-\sigma^*(\bz))(\by-\bz)|^2\spazio.
				\end{align*}
Using the $W^{2,\infty}$ continuity of the function $\sigma$ we easily obtain
				\begin{align*}
				\frac M\delta\mathrm{tr}(\Sigma D^2\psi)\le c\, M\frac{\psi(\bx,\by,\bz)}{\delta}\spazio.
				\end{align*}
				A straightforward computation shows us that
				\begin{align*}
				D^2&\left(\log(d(x)d(y)d(z))\right)_{|(x,y,z)=(\bx,\by,\bz)}=\\
				=&\begin{pmatrix}
				\frac{D^2d(\bx)}{d(\bx)}-\frac{Dd(\bx)\otimes Dd(\bx)}{d^2(\bx)} & 0 & 0\\
				0 & \frac{D^2d(\by)}{d(\by)}-\frac{Dd(\by)\otimes Dd(\by)}{d^2(\by)} & 0\\
				0 & 0 & \frac{D^2d(\bz)}{d(\bz)}-\frac{Dd(\bz)\otimes Dd(\bz)}{d^2(\bz)}
\end{pmatrix}\spazio.
\end{align*}
Then, we have
\begin{align*}
				&-\gamma \mathrm{tr}\left(\Sigma D^2\left(\log(d(x)d(y)d(z))\right)_{|(x,y,z)=(\bar{x},\bar{y},\bar{z})}\right)=\\
=&-\frac{\gamma}{d(\bar x)} \left( \mathrm{tr}(a(\bx)D^2d(\bx))- \frac{a(\bar x)Dd(\bar x)\cdot Dd(\bx) }{d(\bx)} \right) 
\\ & -\frac \gamma{d(\by)} \left(  \mathrm{tr}(a(\by)D^2d(\by))- \frac{a(\by)Dd(\by)\cdot Dd(\by)}{d(\by)}\right) \\&-\frac\gamma{d(\bz)}\left(  \mathrm{tr}(a(\bz)D^2d(\bz))-\frac{a(\bz)Dd(\bz)\cdot Dd(\bz)}{d(\bz)}\right) \,.
\end{align*}
Now we have to analyze the Hamiltonian term. As before, we need to split the computation in two parts, the first one including only the $\psi$ function and the last one involving the logarithmic term.
First of all, we recall that
$$
\hat{H}(\bt,\bx,\by,\bz)=\tilde{H}(\bt,\bx,D_x\varphi(\bx,\by,\bz))+\tilde{H}(\bt,\by,D_y\varphi(\bx,\by,\bz))-2\tilde{H}\left(\bt,\bz,-\miezz D_z\varphi(\bx,\by,\bz)\right).
$$
				Since
				\begin{align*}
				D_x\varphi(\bx,\by,\bz)=\frac M\delta D_x\psi(\bx,\by,\bz)-\gamma D\log d(\bx)
				\end{align*}
				and the same holds for $D_y\varphi$, $D_z\varphi$, we can write
				\begin{align*}
				\hat{H}(\bt,\bx,\by,\bz)=\spazio&\tilde{H}\left(\bt,\bx,\frac M\delta D_x\psi\right)-\gamma\int_0^1 \tilde{H}_p(\bt,\bx,p_1(\lambda))\frac{Dd(\bx)}{d(\bx)}d\lambda\spazio+\\
				+\spazio&\tilde{H}\left(\bt,\by,\frac M\delta D_y\psi\right)-\gamma\int_0^1 \tilde{H}_p(\bt,\by,p_2(\lambda))\frac{Dd(\by)}{d(\by)}d\lambda\spazio-\\
				-2&\tilde{H}\left(\bt,\bz,-\frac M{2\delta} D_z\psi\right)-\gamma\int_0^1 \tilde{H}_p(\bt,\bz,p_3(\lambda))\frac{Dd(\bz)}{d(\bz)}d\lambda\spazio,
				\end{align*}
				where
				\begin{align*}
				&p_1(\lambda)=\frac M\delta D_x\psi(\bx,\by,\bz)-\lambda\gamma D\log d(\bx)\spazio,\\
				&p_2(\lambda)=\frac M\delta D_y\psi(\bx,\by,\bz)-\lambda\gamma D\log d(\by)\spazio,\\
				&p_3(\lambda)=-\frac M{2\delta} D_z\psi(\bx,\by,\bz)+\miezz\lambda\gamma D\log d(\bz)\spazio.\\
				\end{align*}
				Putting these estimates in \eqref{apfuma}, one finds
\begin{align*}
k\varphi(\bx,\by,\bz)\le\spazio &C(1+M)\left(\delta+\frac{\psi(\bx,\by,\bz)}{\delta}\right)
\\
-&\tilde{H}\left(\bt,\bx,\frac M\delta D_x\psi\right)-\tilde{H}\left(\bt,\by,\frac M\delta D_y\psi\right)+2\tilde{H}\left(\bt,\bz,-\frac M{2\delta} D_z\psi\right) \\
-&\frac\gamma{d(\bx)}\int_0^1\left(\mathrm{tr}(a(\bx)D^2d(\bx)) - \frac{a(\bar x)Dd(\bar x)\cdot Dd(\bx) }{d(\bx)}-\tilde{H}_p(\bt,\bx,p_1(\lambda))Dd(\bx)\right)d\lambda \\-&\frac\gamma{d(\by)}\int_0^1\left(\mathrm{tr}(a(\by)D^2d(\by))
- \frac{a(\bar y)Dd(\bar y)\cdot Dd(\by) }{d(\by)}-\tilde{H}_p(\bt,\by,p_2(\lambda))Dd(\by)\right)d\lambda \\-&\frac\gamma{d(\bz)}\int_0^1\left(\mathrm{tr}(a(\bz)D^2d(\bz))- \frac{a(\bar z)Dd(\bar z)\cdot Dd(\bz) }{d(\bz)}-\tilde{H}_p(\bt,\bz,p_3(\lambda))Dd(\bz)\right)d\lambda\spazio.
\end{align*}
We use the invariance condition \rife{invariance} to get rid of the latter terms. So the inequality becomes
\begin{equation}\begin{split}\label{comfortably}
&k\varphi(\bx,\by,\bz)\le\spazio C(1+M)\left(\delta+\frac{\psi(\bx,\by,\bz)}{\delta}\right) \\-&\tilde{H}\left(\bt,\bx,\frac M\delta D_x\psi\right)-\tilde{H}\left(\bt,\by,\frac M\delta D_y\psi\right)+2\tilde{H}\left(\bt,\bz,-\frac M{2\delta} D_z\psi\right)+ c\, \gamma\,.
\end{split}\end{equation}
Finally, since $-D_z\psi= D_x \psi+ D_y \psi$, we use \eqref{hypH} to estimate the last terms involving $\tilde{H}$:
\begin{align*}
-&\tilde{H}\left(\bt,\bx,\frac M\delta D_x\psi\right)-\tilde{H}\left(\bt,\by,\frac M\delta D_y\psi\right)+2\tilde{H}\left(\bt,\bz,-\frac M{2\delta} D_z\psi\right)\le
\\
\le&
C_0    ( |x-z|^2+ |y-z|^2+ |x+y-2z|) (1+ \frac M\delta\, |D_x \psi+ D_y\psi |) 
\\
& \qquad 
 + C_1 |x-y | \frac M\de\left|D_x \psi- D_y\psi \right|\,.
\end{align*}
Using the precise values of $D_x\psi$ and $D_y\psi$, we estimate thanks to Young's inequality
\begin{align*}
C_1\, |x-y | \frac M\de\left|D_x \psi- D_y\psi \right| & \leq 
C\, |x-y | \frac M\de\left( |x-z|^3+ |y-z|^3 \right) 
\\
& 
 \leq C\, \frac M\de \psi
\end{align*}
and  similarly
\begin{align*}
& C_0    ( |x-z|^2+ |y-z|^2+ |x+y-2z|) (1+ \frac M\delta\, |D_x \psi+ D_y\psi |) \leq C \left(\sqrt \psi + \frac M\de \psi\right)
\\
& \leq C\left(\de + \frac M\de \psi\right)\,.
\end{align*}
Eventually,  we end up with
\be\label{estH}
-\tilde{H}\left(\bt,\bx,\frac M\delta D_x\psi\right)-\tilde{H}\left(\bt,\by,\frac M\delta D_y\psi\right)+2\tilde{H}\left(\bt,\bz,-\frac M{2\delta} D_z\psi\right)\le C\left(\de + \frac M\de \psi\right)\,.
\ee
Since
\begin{align*}
k\varphi(\bx,\by,\bz)=&kM\conc-k\gamma\log(d(\bx)d(\by)d(\bz))\ge\\\ge& kM\conc
\spazio,
\end{align*}
we obtain from \eqref{comfortably}--\rife{estH}
\begin{align*}
kM\conc\le C(1+M)\conc+C\, \gamma\spazio.
\end{align*}
We choose $k$ such that $kM-C(1+M)\geq 1$ to have
\begin{align*}
\delta\le\conc\le C\, \gamma\spazio.
\end{align*}
Choosing $\gamma$ sufficiently small we obtain a contradiction.

So, we have found that for $k$ sufficiently large and $\gamma$ sufficiently small, we have
\begin{align*}
v(t,x)+v(t,y)-2v(t,z)\le M\con-\gamma\log(d(x)d(y)d(z))\,,
\end{align*}
for every $x,y,z\in\Omega$, and any $\delta>0$. Letting $\gamma\to0$, we obtain
				\begin{align*}
				u(t,x)+u(t,y)-2u(t,z)\le e^{kT}M\conc\spazio.
				\end{align*}
				So, \eqref{semicu} is proved and the proof is concluded.
\end{proof}

\begin{rem}
We note that the hypothesis  \rife{hypH} is satisfied at least for classical Bellman equations where 
$$
H(x,p)= \sup_{\alpha \in A} (-b(x,\alpha)\cdot p - L(x,\alpha))
$$
assuming that $L(\cdot,\alpha)$  and $b(\cdot,\alpha)$ are $W^{2,\infty}$,  and both conditions hold uniformly with respect to $\alpha\in A$.
Indeed, we have
\begin{align*}
& 2\left(-b(z,\alpha)\cdot \left(\frac{p+q}2\right) - L(z,\alpha)\right) = (-b(z,\alpha)\cdot p - L(x,\alpha))
\\
& \qquad + (-b(z,\alpha)\cdot q - L(y,\alpha))  
+ \left(L(x,\alpha)+L(y,\alpha)-2L(z,\alpha) \right)
\\
& = (-b(x,\alpha)\cdot p - L(x,\alpha))+(-b(y,\alpha)\cdot q - L(y,\alpha))  
\\
& \quad + \frac12(b(x,\alpha)-b(y,\alpha))\cdot (p-q)  + \frac12(b(x,\alpha)+b(y,\alpha)- 2b(z,\alpha)) (p+q)
\\
& \quad + \left(L(x,\alpha)+L(y,\alpha)-2L(z,\alpha) \right)
\\
& \leq H(x,p)+ H(y,q) 
+ C\, |x-y|\, |p-q|
\\
& \qquad + C \, (|x-z|^2+ |y-z|^2+|x+y-2z| )  \,  |p+q|
\\
& \qquad + \left(L(x,\alpha)+L(y,\alpha)-2L(z,\alpha) \right)\,.
\end{align*}
Hence, taking the $sup_{\alpha}$ in the left-hand side, and using the regularity of $L$, implies  \rife{hypH}.
\end{rem}
		
%
%
Finally, we show with the next result that the semiconcavity of $u$ leads to the boundedness of the function $m$.
\begin{prop}
Assume that  the hypotheses of Theorem \ref{usemiconc} are satisfied, and, in addition,  that $H_p(t,x,p)\in W^{1,\infty}(Q_T\times K)$ for all compact sets $K\subset \R^N$. Suppose   that  the following condition holds near the boundary: there exists $\de_0$ such that 
\begin{align}\label{condizionecheboh}
\left(\tilde{b}(x)+H_p(t,x,p)\right)\cdot Dd(x)\le 0\spazio,\hspace{1cm}\forall x\in\Gamma_{\de_0},\ \forall p\in\R^N\spazio.
\end{align}
Then, if $(u,m)$ is a solution of \eqref{mfg}, we have $m\in L^\infty([0,T]\times\Omega)$.
\begin{proof}
We are going to apply the comparison principle in the equation of $m$. To do so, we call $\mu_\eps$ the solution of the following problem:
\begin{equation*}
\begin{cases}
(\mu_\eps)_t-\mathrm{div}(a(x) D\mu_\eps)-\mathrm{div}((\tilde{b}(x)+H_p(t,x,Du))\mu_\eps)=0\,,\qquad (t,x)\in (0,T)\times \Omega_\vep\\
\mu_\eps(0)=m_0\\
[a(x) D\mu_\eps+(\tilde{b}(x)+H_p(t,x,Du))\mu_\eps]\cdot\nu_{|\partial\Omega_\vep}=0\,,
\end{cases} 
\end{equation*}
where, as before, $\Omega_\vep=\{x\,:\, d(x) >\vep\}$. 
With the same arguments used previously we obtain
$$
\mu_\eps\to m\hspace{1cm}a.e.\mbox{ in }[0,T]\times\Omega\spazio.
$$
Since $u$ is a semiconcave function, we can split the last divergence term in order to get
\begin{equation*}
\begin{cases}
(\mu_\eps)_t-\mathrm{div}(a(x) D\mu_\eps)-(\tilde{b}+H_p(t,x,Du))D\mu_\eps+c(t,x)\mu_\eps=0\,,\qquad (t,x)\in (0,T)\times \Omega_\vep\\
\mu_\eps(0)=m_0\\
[a(x) D\mu_\eps+(\tilde{b}(x)+H_p(t,x,Du))\mu_\eps]\cdot\nu_{|\partial\Omega_\vep}=0\,,
\end{cases} 
\end{equation*}
where $c$ is defined as follows:
\begin{align*}
c(t,x)=-\mathrm{div}(\tilde{b})-\mathrm{tr}(H_{px}(t,x,Du))-\mathrm{tr}(H_{pp}(t,x,Du)D^2u)\spazio.
\end{align*}
Recall that in $\Omega_\vep$ the matrix $a(x)$ is elliptic, so  $u$ enjoys the standard parabolic regularity and $c(t,x)$ is well defined (at least in Lebesgue spaces).  We now estimate the function $c$. Since $H_{px}(t,x,Du)$ is in $L^\infty(Q_T)$ (because $u$ is globally Lipschitz) and $D^2u\le CI$, we get, up to changing $C$,
\begin{align*}
c(t,x)\ge -C-\mathrm{tr}(H_{pp}(x,Du)(D^2 u-CI))-C\mathrm{tr}(H_{pp}(x,Du))\ge -k\spazio,
\end{align*}
for a certain $k>0$ and since $H_{pp}(D^2u-CI)$ is a negative semi-definite matrix.\\
Calling $\mu_\eps^k=e^{-kt}\mu_\eps$, we have that $\mu_\eps^k$ is the solution of the following equation
\begin{align*}
\begin{cases}
(\mu_\eps^k)_t-\mathrm{div}((a(x)+\eps I)D\mu_\eps^k)-(\tilde{b}+H_p(x,Du))D\mu_\eps^k+(k+c(t,x))\mu_\eps^k=0\\
\mu_\eps^k(0)=m_0\\
[\eps D\mu_\eps^k+(\tilde{b}(x)+H_p(x,Du))\mu_\eps^k]\cdot\nu_{|\partial\Omega_\vep}=0
\end{cases}\spazio.
\end{align*}
We choose $k$ such that $k+c(t,x)\ge0$ for each $(t,x)\in[0,T]\times\Omega_\vep$. Then, thanks to \eqref{condizionecheboh}, it is immediate to prove that $M$ is a super-solution of the equation of $\mu_\eps^k$, for $M\ge\norm{m_0}_\infty$.\\
Now we can easily conclude the proof: thanks to the comparison principle, we have
\begin{align*}
\mu_\eps^k(t,x)\le M\implies\mu_\eps(t,x)\le e^{kT}M\overset{\eps\to 0}{\implies}m(t,x)\le e^{kT}M\spazio,
\end{align*}
where the estimates are true almost everywhere in $(t,x)$. Since $m\ge0$, the proof is concluded.
\end{proof}
\end{prop}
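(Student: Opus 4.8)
The plan is to obtain the $L^\infty$ bound on $m$ by a comparison argument carried out directly on the Fokker--Planck equation, using the semiconcavity of $u$ (from Theorem~\ref{usemiconc}) to control the divergence of the effective drift and the boundary inequality \eqref{condizionecheboh} to turn a constant into a supersolution. As in the earlier existence proofs, I would work on the interior domains $\Omega_\vep=\{x:d(x)>\vep\}$ and let $\vep\to0$ at the end.

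First I would introduce $\mu_\vep$ as the solution of the interior Neumann problem for the Fokker--Planck equation on $\Omega_\vep$ with drift $\tilde b(x)+H_p(t,x,Du)$, where $(u,m)$ is the given solution of \eqref{mfg}, so that $u$ is globally Lipschitz and semiconcave. Since $Du\in L^\infty(Q_T)$ the drift is locally bounded, and since \eqref{invariance} holds it also satisfies the invariance condition \eqref{invariance-fp}; hence the stability result of Proposition~\ref{stab-fp} applies and gives $\mu_\vep\to m$, in particular a.e. in $Q_T$. On each fixed $\Omega_\vep$ the matrix $a$ is uniformly elliptic, so parabolic regularity makes $D^2u$, $H_{px}(t,x,Du)$, $H_{pp}(t,x,Du)$ well defined there (at least in Lebesgue spaces), and I can rewrite the divergence term as $\mathrm{div}\bigl((\tilde b+H_p(t,x,Du))\mu_\vep\bigr)=(\tilde b+H_p)\cdot D\mu_\vep-c(t,x)\mu_\vep$ with $c(t,x)=-\mathrm{div}\tilde b-\mathrm{tr}(H_{px}(t,x,Du))-\mathrm{tr}(H_{pp}(t,x,Du)D^2u)$.

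The key estimate is the lower bound $c(t,x)\ge-k$ for some $k>0$: $\mathrm{div}\tilde b$ is bounded because $a=\sigma\sigma^*\in W^{2,\infty}$; $\mathrm{tr}(H_{px}(t,x,Du))$ is bounded because $u$ is globally Lipschitz and $H_p\in W^{1,\infty}(Q_T\times K)$; and for the last term I would split $\mathrm{tr}(H_{pp}D^2u)=\mathrm{tr}\bigl(H_{pp}(D^2u-CI)\bigr)+C\,\mathrm{tr}(H_{pp})$, observing that $H_{pp}\ge0$ by convexity of $H$ while $D^2u\le CI$ by semiconcavity, so the first piece is $\le0$ and the second is bounded. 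Having this, I set $\mu_\vep^k:=e^{-kt}\mu_\vep$, which solves the same type of equation with zeroth-order coefficient $k+c(t,x)\ge0$, and I check that the constant $M:=\|m_0\|_\infty$ is a supersolution: the PDE part is immediate since $(k+c)M\ge0$, and for the Neumann flux on $\partial\Omega_\vep$ one only needs $M\,(\tilde b+H_p(t,x,Du))\cdot\nu\ge0$, which holds because $\nu=-Dd$ near the boundary and \eqref{condizionecheboh} gives exactly $(\tilde b+H_p)\cdot Dd\le0$. The comparison principle then yields $\mu_\vep^k\le M$ on $\Omega_\vep$, hence $\mu_\vep\le e^{kT}M$, and letting $\vep\to0$ gives $m\le e^{kT}\|m_0\|_\infty$ a.e.; combined with $m\ge0$ this proves $m\in L^\infty([0,T]\times\Omega)$.

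The main obstacle I anticipate is not the algebra but making the comparison argument legitimate. For each fixed $\vep$ one must know that $u$ is regular enough on $\Omega_\vep$ for $c(t,x)$ to be an admissible coefficient and for a Neumann-type comparison principle to apply — this is exactly where the interior ellipticity of $a$ and standard parabolic regularity (as in \cite{LSU}) enter — and one must be careful that the semiconcavity inequality $D^2u\le CI$, a priori valid in a distributional/viscosity sense, can be used pointwise a.e. in the interior, which again follows from interior smoothing. A secondary technical point is to match the sign conventions in \eqref{condizionecheboh} with the no-flux boundary condition imposed on $\mu_\vep$ (done above via $\nu=-Dd$) and to verify that this boundary inequality is inherited near $\partial\Omega_\vep$ by the approximating domains.
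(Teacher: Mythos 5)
Your proposal follows the paper's proof essentially step for step: interior Neumann problems on $\Omega_\vep$, stability to get $\mu_\vep\to m$, rewriting the transport term to isolate $c(t,x)=-\mathrm{div}\,\tilde b-\mathrm{tr}(H_{px})-\mathrm{tr}(H_{pp}D^2u)$, the lower bound $c\ge -k$ from convexity of $H$ and semiconcavity of $u$, the exponential change of unknown $\mu_\vep^k=e^{-kt}\mu_\vep$, verifying that the constant $M=\|m_0\|_\infty$ is a supersolution via \eqref{condizionecheboh} and $\nu=-Dd$, and the comparison principle. The only difference is that you spell out more explicitly the regularity points (interior parabolic smoothing, pointwise a.e.\ use of semiconcavity) that the paper states more tersely, which is a reasonable elaboration rather than a different route.
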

\section{Non-smooth domains}
Unfortunately, in many applications one needs to consider that the state variable does not belong  to a  $\mathcal{C}^{2}$ domain. This implies that the distance function  from the boundary of $\Omega$ turns out not to be a $\mathcal{C}^2$ function, and the invariance condition \eqref{invariance} becomes meaningless. However,  a generalization of the results obtained so far is possible, in the following setting.

\begin{thm}\label{nonsmooth}
Suppose that $\exists\psi\in\mathcal{C}^2(\Omega)$ such that $\psi>0$ in $\Omega$, $\psi=0$ in $\partial{\Omega}$ and  the following inequality holds in a  neighborhood $V$ of $\partial\Omega$:
\begin{equation}\label{invnonsmooth}
\begin{split}
&\mathrm{tr}(a(x)D^2\psi(x))-H_p(t,x,p)D\psi(x)   \ge \frac{a(x) D\psi(x) \cdot D\psi(x)}{\psi(x)}- C \psi(x)\,
\\
&  \qquad  \hbox{$\forall\ p\in\mathbb{R}^N$ ,$\,\,\forall t\in[0,T]\,\,$  and a.e. $x\in V$}\,.
\end{split}
\end{equation}
Then, all the results of the previous sections remain true replacing \rife{invariance} with \rife{invnonsmooth}.
\end{thm}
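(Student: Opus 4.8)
The plan is to observe that in every argument of Sections 3--6 the oriented distance $d$ intervenes \emph{only} as a fixed barrier near $\partial\Omega$, through four features: it is $C^2$ up to a neighbourhood of $\partial\Omega$; it is positive in $\Omega$ and vanishes on $\partial\Omega$; $-\log d$ blows up at $\partial\Omega$; and it satisfies the invariance inequality \rife{invariance}. The function $\psi$ of the statement has the first three properties, with \rife{invnonsmooth} replacing \rife{invariance}. So the strategy is to \emph{substitute $\psi$ for $d$ everywhere} and check that each boundary computation still closes. I would start with the uniqueness proof for the HJB equation (Theorem \ref{unichjb}): the supersolution becomes $v_\eps=v+\eps^2(M-\log\psi(x))+\eps\sqrt{T-t}$, and the chain \rife{veps}--\rife{use-inv2} is reproduced verbatim using $\mathrm{tr}(aD^2(-\log\psi))=-\frac{\mathrm{tr}(aD^2\psi)}{\psi}+\frac{aD\psi\cdot D\psi}{\psi^2}$ and \rife{invnonsmooth}; the interior Lipschitz bound $|Dv(t,x)|\le C_\delta/\sqrt{T-t}$ now holds on $\{\psi(x)\ge\delta\}$, which is a compact subset of $\Omega$, so it is unaffected by the shape of $\partial\Omega$. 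Since $-\log\psi\to+\infty$ on $\partial\Omega$ and $\{\psi\ge\eta\}\nearrow\Omega$ with $\{\psi\ge\eta\}$ compact in $\Omega$, the comparison step and the limit $\eta\to0$ are unchanged. The existence result for HJB (Theorem \ref{exhjb}) and the compactness/stability lemmas are purely interior and require no modification.

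For the Fokker--Planck equation and the mean field game system I would keep the scheme of approximation by internal domains, now taking $\Omega_\eps:=\{x\in\Omega:\psi(x)>\eps\}$; by Sard's theorem one picks a sequence $\eps_n\downarrow0$ of regular values of $\psi$, so that each $\Omega_{\eps_n}$ is a compact $C^2$ domain and $\Omega_{\eps_n}\nearrow\Omega$. The key point is that on $\partial\Omega_{\eps_n}=\{\psi=\eps_n\}$ the outward unit normal is $\nu=-D\psi/|D\psi|$, so the sign condition used in Proposition \ref{stab-fp} (namely $aD\phi_\eps\cdot\nu\le0$ on the boundary, with $\phi_\eps=\log(\psi(x)+\delta)-\log\delta$) holds since $aD\psi\cdot D\psi\ge0$; moreover the structural inequality that replaced \rife{divereps} is now obtained \emph{directly} from \rife{invnonsmooth}, with no need for the auxiliary parameters $r_\eps,h_\eps$, because here $\psi$ is $C^2$ on all of $\Omega$: writing the operator in divergence form via $\tilde b$ (see \rife{tilde}),
\[
\mathrm{div}(aD\phi_\eps)-b\cdot D\phi_\eps=\frac{1}{\psi+\delta}\Bigl(\mathrm{div}(aD\psi)-b\cdot D\psi-\frac{aD\psi\cdot D\psi}{\psi+\delta}\Bigr)\ge\frac{1}{\psi+\delta}\Bigl(aD\psi\cdot D\psi\bigl(\tfrac1\psi-\tfrac1{\psi+\delta}\bigr)-C\psi\Bigr)\ge -C\,m_\eps/m_\eps,
\]
so that $\int_0^t\!\int m_\eps(\mathrm{div}(aD\phi_\eps)-b\cdot D\phi_\eps)\ge -C$ uniformly in $\eps,\delta$, which is all Step 2 of that proof uses (the remaining steps of Proposition \ref{stab-fp} are interior or rely only on mass conservation). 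The uniqueness proof of Theorem \ref{exfp} only invokes existence for HJB, already generalized, and the existence/uniqueness theorems for \rife{mfg} in Section 5 are built on top of these, so they carry over word for word.

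For the regularity results of Section 6 I would again replace $d$ by $\psi$. In the Bernstein estimate (Theorem \ref{lipthm}) one sets $w_\eps=|Du_\eps|^2e^{\theta(\psi)}$ with $\theta(s)=s^\gamma$, solves the Neumann problem on $\Omega_\eps=\{\psi>\eps\}$ for $\eps$ a regular value, and the lower bound on the zero-order coefficient $c_\eps$ near $\partial\Omega$ follows from \rife{invnonsmooth} exactly as it did from \rife{invariance}; on $\{\psi\ge\delta_0\}$, a compact subset of $\Omega$, one uses interior parabolic regularity, and the exclusion of a boundary maximum of $w_\eps$ on $\partial\Omega_\eps$ is a local Hopf-type argument on the $C^2$ domain $\Omega_\eps$. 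In the semiconcavity proof (Theorem \ref{usemiconc}) the penalisation $-\gamma\log(d(x)d(y)d(z))$ becomes $-\gamma\log(\psi(x)\psi(y)\psi(z))$, whose Hessian produces the terms $\frac{\mathrm{tr}(aD^2\psi)}{\psi}-\frac{aD\psi\cdot D\psi}{\psi^2}$ that are absorbed by \rife{invnonsmooth}, and since this term forces the supremum to be attained in the interior, the tripling-variable argument is unchanged. Finally, in the last proposition of Section 6 the condition \rife{condizionecheboh} is imposed with $D\psi$ in place of $Dd$, and the comparison argument for $m$ goes through as written.

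The main obstacle — and the only place where something genuinely new must be checked rather than copied — is the interaction between the \emph{fixed} barrier $\psi$ and the \emph{moving} approximating domains $\Omega_\eps$: one must verify that every boundary term (conormal derivatives in the double integrations by parts, the Neumann exclusion of boundary maxima, the sign $aD\phi_\eps\cdot\nu\le0$) still has the correct sign even though $\psi$ is not the distance function and $|D\psi|\ne1$, and that $\{\psi>\eps\}$ is a $C^2$ domain for the chosen $\eps$ — which holds, along a sequence $\eps_n\downarrow0$, by Sard's theorem. Everything else is a transcription of the $C^2$-domain proofs with $\psi$ substituted for $d$.
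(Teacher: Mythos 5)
Your proposal is correct and is essentially the paper's own argument: the paper's entire proof consists of the remark that all earlier proofs go through once $d$ is replaced by $\psi$ and $\Gamma_\eps$ by $\{\psi<\eps\}\cap\Omega$, which is precisely the substitution you carry out and verify step by step. (Your observation about invoking Sard's theorem to pick a sequence of regular values $\eps_n\downarrow 0$ so that the level sets $\{\psi>\eps_n\}$ are $C^2$ domains fills a detail the paper leaves implicit, but it is part of the same route, not a departure from it.)
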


All the proofs can be done in the same way, replacing $d$ by $\psi$ and the set $\Gamma_{\eps}$ by $\{\psi<\eps\}\cap\Omega$.\\

This generalization plays a crucial role in order  that the smoothness assumption  for the domain $\Omega$  be weakened. 
As an example, we define a class of non-smooth domains and we prove that hypothesis \eqref{invnonsmooth} is satisfied for those ones.

\begin{defn}
Let $\Omega\subseteq\R^N$. We say that $\Omega$ is a generalized $\mathcal{C}^{2}$ domain if $\exists n\in\N$ and a collection of sets $\{\Omega^i\}_{1\le i\le n}$ such that $\overline \Omega^i$ is a  compact domain  of class $\mathcal{C}^{2}$ and
$$
\Omega=\overset{n}{\underset{i=1}{\bigcap}}\,\Omega^i\spazio.
$$
\end{defn}
From now on, when we write $d(x)$ and $d_i(x)$, in the case of a generalized $\mathcal{C}^{2}$ domain, we mean respectively $d_{\Omega}(x)$ and $d_{\Omega^i}(x)$. 
Moreover, we will use the following notation:
$$
\Omega^i_\eps=\{ d_i(x)>\eps\}\cap\Omega\,,\qquad\Gamma^i_\eps=\{ d_i(x)<\eps \}\cap\Omega\,.
$$
\vspace{0cm}

We now show that Theorem \ref{nonsmooth} applies to generalized $\mathcal{C}^{2}$ domains.

\begin{prop}
Let $\Omega=\overset{n}{\underset{i=1}{\bigcap}}\,\Omega^i$ be a generalized $\mathcal{C}^{2}$ domain. Suppose that $\exists\ \delta, C_0>0$ s.t. $\forall\ p\in\mathbb{R}^N$, $\forall 1\le i\le n$ and $\forall\ x\in\Gamma_\delta^i$ the following inequality holds:
\begin{equation}\label{invarianza2}
\mathrm{tr}(a(x)D^2d_i(x))-H_p(x,p)Dd_i(x)\ge\frac{a(x)Dd_i(x)Dd_i(x)}{d_i(x)}-C_0d_i(x)\spazio.
\end{equation}
Then all the results of the previous sections remain true for the non-smooth domain $\Omega$.
\begin{proof}
We have to prove that condition \eqref{invnonsmooth} is satisfied for a certain $\mathcal{C}^2$ function $\psi$.\\
	To do that, we consider $\phi:[0,+\infty)\to\R$ a $C^{2}$ function such that $\phi(s)=s$ when $s\le\frac\delta2$, $\phi\equiv1$	for $s\ge  \delta$ and $\phi'(s)\ge0$. 
	Moreover, we require that $\phi(x)\ge x$ in $[0, \delta]$. This can be done for $\delta$ sufficiently small.\\
	
	We take $\psi(x)=\prod\limits_i\phi(d_i(x))$ and we prove that \eqref{invnonsmooth} is satisfied in $V=\bigcup\limits_{\delta}\Gamma^i_\delta$ for a certain $\delta>0$.
	
	From now on, we will write $\phi_i$, $\phi'_i$, $\phi''_i$ instead of $\phi(d_i(x))$, $\phi'(d_i(x))$, $\phi''(d_i(x))$ to simplify the notation.	 Computing the derivative $D\psi$ and $D^2\psi$ we find
	\begin{align*}
	&D\psi=\mathlarger{\sum\limits_i}\prod\limits_{j\neq i}\phi_j\,\phi'_i Dd_i\,,\\
	&D^2\psi=\mathlarger{\sum\limits_i}\prod\limits_{j\neq i}\phi_j\,\phi'_i D^2d_i+\mathlarger{\sum\limits_i}\prod\limits_{j\neq i}\phi_j\,\phi''_i Dd_i\otimes Dd_i+\mathlarger{\sum\limits_{i,k\neq i}}\prod\limits_{j\neq i,k}\phi_j\,\phi'_i\phi'_k Dd_i\otimes Dd_k\,.
	\end{align*}
	Plugging these computations in \eqref{invnonsmooth} we find
	\begin{align*}
	&\mathrm{tr}(a(x)D^2\psi(x))-H_p(t,x,p)D\psi(x) - \frac{a(x) D\psi(x) \cdot D\psi(x)}{\psi(x)} + C \psi(x)=\\=&\mathlarger{\sum\limits_i}\prod\limits_{j\neq i}\phi_j\,\phi'_i\left(\mathrm{tr}(a(x) D^2d_i)-H_p(x,p)Dd_i\right)+\mathlarger{\sum\limits_i}\prod\limits_{j\neq i}\phi_j\,\phi''_i\, a(x)Dd_i\cdot Dd_i+\\
	+&\mathlarger{\sum\limits_{i,k\neq i}}\prod\limits_{j\neq i,k}\phi_j\,\phi'_i\phi'_k\, a(x)Dd_i\cdot Dd_k-\frac{\mathlarger{\sum\limits_{i,k}}\prod\limits_{j\neq i}\phi_j\prod\limits_{l\neq k}\phi_l\,\phi'_i\phi'_k\, a(x)Dd_i\cdot Dd_k}{\prod\limits_i \phi_i}+C\psi(x).
	\end{align*}
	We start analyzing the first term. 
	Because of the presence of $\phi'_i$, we can study each term of the sum only in $\Gamma_{\delta}^i$. So, choosing $\delta$ such that \eqref{invarianza2} holds true, we get
	\begin{align*}
	&\mathlarger{\sum\limits_i}\prod\limits_{j\neq i}\phi_j\,\phi'_i\left(\mathrm{tr}(a(x) D^2d_i)-H_p(x,p)Dd_i\right)\ge \mathlarger{\sum\limits_i}\prod\limits_{j\neq i}\phi_j\,\phi'_i\left(\frac{a(x) Dd_i\cdot Dd_i}{d_i}-C_0 d_i\right)\,.
	\end{align*}
	Since $d_i\le\phi_i$ in $\Gamma_{ \delta}^i$, one has
	$$
	-C_0\mathlarger{\sum\limits_i}\prod\limits_{j\neq i}\phi_j\,\phi'_i d_i\ge -C_1\mathlarger{\sum\limits_i}\prod\limits_{j}\phi_j\ge -C_1\psi(x)\,,
	$$
	where $C_1$ is a constant depending on $C_0$ and $n$ that can change from line to line.\\
	
	Then we look at the third and the fourth terms. Since, for $i\neq k$,
	$$
	\frac{\prod\limits_{j\neq i}\phi_j\,\prod\limits_{l\neq k}\phi_l}{\prod\limits_i \phi_i}=\prod\limits_{j\neq i,k}\phi_j\,,
	$$
	then we have
\begin{align*}
& \mathlarger{\sum\limits_{i,k\neq i}}\prod\limits_{j\neq i,k}\phi_j\,\phi'_i\phi'_k\, a(x)Dd_i\cdot Dd_k-\frac{\mathlarger{\sum\limits_{i,k}}\prod\limits_{j\neq i}\phi_j\prod\limits_{l\neq k}\phi_l\,\phi'_i\phi'_k\, a(x)Dd_i\cdot Dd_k}{\prod\limits_i \phi_i}
\\ & =	-\frac{\mathlarger{\sum\limits_i}\left(\prod\limits_{j\neq i}\phi_j\right)^2{(\phi'_i)}^2 a(x)Dd_i\cdot Dd_i}{\prod\limits_i\phi_i}=-\mathlarger{\sum\limits_i}\prod\limits_{j\neq i}\phi_j\,\frac{{(\phi'_i)}^2}{\phi_i} a(x)Dd_i\cdot Dd_i\,.
\end{align*}
	Using these estimates, we obtain
	\begin{equation}\begin{split}\label{quasifinal}
	&\mathrm{tr}(a(x)D^2\psi(x))-H_p(t,x,p)D\psi(x) - \frac{a(x) D\psi(x) \cdot D\psi(x)}{\psi(x)} + C \psi(x)\ge\\\ge\,\,&\mathlarger{\sum\limits_i}\prod\limits_{j\neq i}\phi_j\, a(x)Dd_i\cdot Dd_i\left(\frac{\phi'_i}{d_i}-\frac{{(\phi'_i)}^2}{\phi_i}+\phi''_i\right)+(C-C_1)\psi(x)\,.
	\end{split}\end{equation}
	To conclude, we want to prove that
	$$
	\frac{\phi'_i}{d_i}-\frac{{(\phi'_i)}^2}{\phi_i}+\phi''_i\ge -C_2\phi_i
	$$
	for a certain constant $C_2$. This is equivalent to prove that,  for all $x\in\R^+$,
	$$
	\phi''(x)\phi(x)x-{(\phi'(x))}^2x+\phi'(x)\phi(x)\ge -C_2\phi^2(x)x\,.
	$$
	Since  $\phi(s)=s$ in $[0,\frac\delta2]$, we obtain immediately that the left hand side term vanishes in this interval, and so the relation is verified. A similar computation occurs for $s\geq \de$. Finally, for $s\in [\frac\delta2,\delta]$  the relation is certainly satisfied for a constant $C_2$ depending on $\delta$. Therefore we obtain from \eqref{quasifinal}
	\begin{equation*}
	\mathrm{tr}(a(x)D^2\psi(x))-H_p(t,x,p)D\psi(x) - \frac{a(x) D\psi(x) \cdot D\psi(x)}{\psi(x)} + C \psi(x)\ge(C-C_1-C_2)\psi(x)\,,
	\end{equation*}
	which, for $C$ sufficiently large, proves that condition \eqref{invnonsmooth} is satisfied. This concludes the proof.
\end{proof}
\end{prop}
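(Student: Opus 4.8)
The plan is to deduce the statement from Theorem \ref{nonsmooth} by producing a single $C^2$ barrier $\psi$, manufactured out of the distance functions $d_i$ to the smooth pieces $\Omega^i$. First I would fix a cutoff $\phi\in C^2([0,+\infty))$ with $\phi(s)=s$ on $[0,\delta/2]$, $\phi(s)\equiv 1$ on $[\delta,+\infty)$, $\phi'\ge 0$, and $\phi(s)\ge s$ on $[0,\delta]$; these are compatible once $\delta$ is small enough (in particular $\delta$ smaller than the $C^2$-regularity radius of each $d_i$, so that each $\phi(d_i)$ is globally $C^2$). Then I set $\psi(x)=\prod_{i=1}^n\phi(d_i(x))$. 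Since $\partial\Omega\subseteq\bigcup_i\partial\Omega^i$, on $\partial\Omega$ at least one $d_i$ vanishes, so $\psi=0$ there, whereas $\psi>0$ in $\Omega$ and $\psi\in C^2(\Omega)$; hence it only remains to verify the generalized invariance inequality \eqref{invnonsmooth} for this $\psi$ in a neighbourhood of $\partial\Omega$.

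The computational heart is to expand $D\psi$ and $D^2\psi$ by the product rule and substitute into
\[ \mathrm{tr}(a D^2\psi)-H_p\cdot D\psi-\frac{a D\psi\cdot D\psi}{\psi}+C\psi . \]
I would organize the outcome into a first-order sum $\sum_i\big(\prod_{j\ne i}\phi_j\big)\phi'_i\big(\mathrm{tr}(aD^2d_i)-H_p\cdot Dd_i\big)$, the terms carrying $\phi''_i$, and the off-diagonal terms involving $aDd_i\cdot Dd_k$ with $i\ne k$. To the first-order sum I apply hypothesis \eqref{invarianza2} — legitimate because $\phi'_i$ is supported where $d_i<\delta$, i.e. in $\Gamma^i_\delta$ — bounding it below by $\sum_i\big(\prod_{j\ne i}\phi_j\big)\phi'_i\big(\tfrac{aDd_i\cdot Dd_i}{d_i}-C_0 d_i\big)$. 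The key algebraic point is that the off-diagonal contribution from $D^2\psi$ cancels exactly against the off-diagonal part of $-aD\psi\cdot D\psi/\psi$, since $\big(\prod_{j\ne i}\phi_j\big)\big(\prod_{l\ne k}\phi_l\big)=\psi\,\prod_{j\ne i,k}\phi_j$ for $i\ne k$; what survives from the quadratic term is only the diagonal $-\sum_i\big(\prod_{j\ne i}\phi_j\big)\tfrac{(\phi'_i)^2}{\phi_i}\,aDd_i\cdot Dd_i$. Collecting the $aDd_i\cdot Dd_i$ terms and using $d_i\le\phi_i$ on $\Gamma^i_\delta$ to absorb $-C_0 d_i$ into $-C_1\psi$, the whole expression is bounded below by
\[ \sum_i\Big(\prod_{j\ne i}\phi_j\Big)\,aDd_i\cdot Dd_i\,\Big(\frac{\phi'_i}{d_i}-\frac{(\phi'_i)^2}{\phi_i}+\phi''_i\Big)+(C-C_1)\psi . \]

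It then remains to establish the scalar inequality $\tfrac{\phi'(s)}{s}-\tfrac{(\phi'(s))^2}{\phi(s)}+\phi''(s)\ge -C_2\,\phi(s)$ for $s>0$, equivalently $\phi''\phi s-(\phi')^2 s+\phi'\phi\ge -C_2\phi^2 s$. On $[0,\delta/2]$ the left-hand side vanishes identically because there $\phi(s)=s$ (which also shows that the a priori singular combination $\tfrac{\phi'_i}{d_i}-\tfrac{(\phi'_i)^2}{\phi_i}$ is harmless), and likewise on $[\delta,+\infty)$ because there $\phi\equiv 1$; on the compact interval $[\delta/2,\delta]$ all quantities are bounded and $\phi\ge\delta/2>0$, so a sufficiently large $C_2=C_2(\delta)$ works. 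Since $a\ge 0$ gives $aDd_i\cdot Dd_i\ge 0$, multiplying this scalar inequality at $s=d_i$ by the nonnegative weights $\big(\prod_{j\ne i}\phi_j\big)aDd_i\cdot Dd_i$, summing, and using $aDd_i\cdot Dd_i\le\|a\|_\infty$ together with $\big(\prod_{j\ne i}\phi_j\big)\phi_i=\psi$, I bound the displayed quantity below by $(C-C_1-C_2 n\|a\|_\infty)\psi$, which is nonnegative once $C$ is chosen large. Hence \eqref{invnonsmooth} holds and Theorem \ref{nonsmooth} applies. The step I expect to be most delicate is the exact cancellation of the off-diagonal terms: it is precisely what keeps the mixed quantities $aDd_i\cdot Dd_k$ — over which there is no sign information near the corners of $\Omega$, where several smooth faces meet — from destroying the estimate.
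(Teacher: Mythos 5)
Your proposal is correct and follows essentially the same path as the paper: the same cutoff $\phi$, the same product barrier $\psi=\prod_i\phi(d_i)$, the same reorganization of $\mathrm{tr}(aD^2\psi)-H_p\cdot D\psi - aD\psi\cdot D\psi/\psi$ into a first-order sum controlled by \eqref{invarianza2}, the same exact cancellation of the off-diagonal $aDd_i\cdot Dd_k$ terms (which you rightly single out as the crux), and the same scalar inequality $\phi''\phi s-(\phi')^2 s+\phi'\phi\ge -C_2\phi^2 s$ checked piecewise on $[0,\delta/2]$, $[\delta/2,\delta]$ and $[\delta,\infty)$.
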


\section{Appendix}

In this Appendix, we give the proof of  two technical results.

\vskip1em

{\bf Proof of  Lemma \ref{compact-hjb}.} \quad 
We consider the sequence of compact sets $\{D_k\}_{k\in\mathbb{N}}$ defined in \rife{qn}.  For each $k\in\mathbb{N}$ we take a cut-off function $\xi_k$ such that
\be\label{cutoff}
\begin{cases}
\xi_k\in C_c^\infty(\Omega)\,, \quad 0\le\xi_k\le 1 & \\
\xi_k(x)\equiv 1 \quad \hbox{ for $x\in D_k$} & \\
\xi_k(x)\equiv 0 \quad \hbox{for $x\in D_{k+1}$.}
\end{cases}
\ee
For $\eps$ small enough and $\lambda>0$, we multiply the equation \eqref{hjbeps} by $e^{\lambda u_\eps}\xi_k^2$ and we integrate in $[t,T]\times\Omega_\eps$:
\begin{equation*}\begin{split}
&\frac 1\lambda\intok{k+1}e^{\lambda u_\eps}(t)\xi_k^2\spazio dx +\lambda\intfk{k+1}e^{\lambda u_\eps} a_\eps(x)Du_\eps \cdot Du_\eps\, \xi_k^2\spazio dxdt \\+&\intfk{k+1}H_\eps(t,x,Du_\eps)e^{\lambda u_\eps}\xi_k^2\spazio dxdt+\intfk{k+1} a_\eps(x)Du_\eps \cdot D\xi_k\,2e^{\lambda u_\eps}\xi_k\spazio dxdt \\
&\qquad\qquad = \frac 1\lambda\intok{k+1}e^{\lambda u_\eps}(T)\xi_k^2\,dx\le C 
\end{split}\end{equation*}
since $u_\vep$ is uniformly bounded. From the local uniform coercivity of $a_\vep$, we have $a_\vep(x) \geq \lambda_{k+1}\, I$ for $x\in D_{k+1}$. Using also the local uniform natural growth assumed upon $H_\vep$, and a local bound on  $a_\vep$, we deduce that
\begin{align*}
 \lambda\, \lambda_{k+1}\intfk{k+1} e^{\lambda u_\eps}\, |Du_\eps|^2\, \xi_k^2\spazio dxdt  \leq C_k \intifk{k+1}e^{\lambda u_\eps}(1+|Du_\eps|^2) \xi_k^2 \,dxdt 
\end{align*}
for some constant $C_k$ only depending on $k$. Choosing $\lambda$ sufficiently large (depending on $k$), we can bound the gradient of $u_\vep$ in  $D_k$. Hence, together with the $L^\infty$ bound, we deduce that  $u_\eps$  is bounded in $L^2([0,T];W^{1,2}(D_k))$ for each $k\in\mathbb{N}$. 

From \eqref{hjbeps} now we get that ${(u_\eps)}_t$ is bounded in $L^2([0,T];W^{-1,2}(D_k))$. So, by   \cite[Corollary 4]{Simon}, we deduce that $u_\vep$ is relatively compact in $L^2(D_k)$. By a  standard diagonal argument, we  can therefore extract a  subsequence, which we still denote by $u_\vep$, such that 
\begin{equation*}
u_\eps\to u\hspace{0.1cm}\mbox{weakly in $L^2([0,T];W^{1,2}(K))$ and strongly in $L^p([0,T]\times K)$ for every $p<\infty$,} 
\end{equation*}
for any compact subset $K\subset \Omega$. 

We now aim at getting  the strong convergence. To this purpose we assume that the matrix $a_\vep(x)$ converges (up to subsequences) almost everywhere in $\Omega$ towards some matrix $a(x)$. We further suppose by now that $u_\vep(T)$ converges almost everywhere in $\Omega$ to some function $g(x)$, in order to get the full convergence up to $t=T$. We notice that, since $u_\vep$ is uniformly bounded, this implies that $u_\vep(T) \to g$ strongly in $L^p(\Omega)$ for all $p<\infty$. Moreover,  since ${(u_\eps)}_t$ converges to $u_t$ weakly in  $L^2([0,T];W^{-1,2}(D_k))$ for all $D_k$, one has  that $u_t \in L^2([0,T];W^{-1,2}(D_k))$, so $u\in C^0([0,T]; L^2(D_k))$ and actually $u(T)= g(x)$ in $\Omega$.

Now we multiply  \rife{hjbeps} by  $\psi(u_\eps-u)  \xi_k^2$, for a convenient  increasing function $\psi$ to be chosen later. We proceed in a  similar way as above obtaining
\begin{align*}
&   \lambda_{k+1}\intifk{k+1}\psi'(u_\eps-u)\, |Du_\eps- Du|^2\, \xi_k^2\spazio dxdt  
\\
& \quad \leq - \intifk{k+1} \psi'(u_\vep-u) a_\vep(x)Du\cdot (Du_\vep-Du)\, \xi_k^2
\\
& \quad + C_k \intifk{k+1} |\psi(u_\eps-u)| (1+|Du_\eps|^2)  \, \xi_k^2 \,dxdt  - 
\int_0^T \langle \partial_t u_\vep, \psi(u_\vep-u)\xi_k^2\rangle\,,
\end{align*}
which yields, using $|Du_\eps|^2\leq 2(|Du_\eps- Du|^2+ |Du|^2)$:
\begin{align*}
&   \intifk{k+1} [\lambda_{k+1}\psi'(u_\eps-u)- 2C_k|\psi(u_\eps-u)|]\, |Du_\eps- Du|^2\, \xi_k^2\spazio dxdt  \leq 
\\
& \leq - \intifk{k+1} \psi'(u_\vep-u) a_\vep(x)Du\cdot (Du_\vep-Du)\, \xi_k^2
\\
& \quad  + C_k \intifk{k+1} |\psi(u_\eps-u)| (1+2 |Du|^2)  \, \xi_k^2 \,dxdt  + 
\int_0^T \langle \partial_t u_\vep, \psi(u_\vep-u)\xi_k^2\rangle\,.
\end{align*}
Now we choose $\psi$ such that  $s\psi(s)\ge 0$ and
$\lambda_{k+1}\psi'(s)-2C_k |\psi(s)| > 0$ for all $ s\in\R$.
A typical choice is e.g.  $\psi(s)=se^{bs^2}$ with $b=\frac{C_k^2}{\lambda_{k+1}^2}$. So we get
\be\label{espk}
\begin{split}
&   \intifk{k+1}  |Du_\eps- Du|^2\, \xi_k^2\spazio dxdt  \leq 
\\
& \leq - C_k \intifk{k+1} \psi'(u_\vep-u) a_\vep(x)Du\cdot (Du_\vep-Du)\, \xi_k^2
\\
& \quad  + C_k \intifk{k+1} |\psi(u_\eps-u)| (1+2 |Du|^2)  \, \xi_k^2 \,dxdt  + 
C_k \int_0^T \langle \partial_t u_\vep, \psi(u_\vep-u)\xi_k^2\rangle\,,
\end{split}
\ee
where $\langle \cdot,\cdot\rangle$ is the duality between $W^{1,2}(\Omega_\vep)$ and its dual. We conclude by showing that all terms in the right-hand side go  to zero as $\vep\to0$. Indeed, 
if $\Psi(s)= \int_0^s \psi(r)dr$, we have
\begin{align*}
& \int_0^T \langle \partial_t u_\vep, \psi(u_\vep-u)\xi_k^2\rangle = \into \Psi(u_\vep(t)-u(t))dx \mathop{I}^T_0 + \int_0^T \langle \partial_t u, \psi(u_\vep-u)\xi_k^2\rangle
\\
& \leq \into \Psi(u_\vep(T)-u(T))dx  + \int_0^T \langle \partial_t u, \psi(u_\vep-u)\xi_k^2\rangle\, dt
\end{align*}
and the last two terms converge  to zero because  $\psi(u_\vep-u)\xi_k^2$ weakly converges to zero in $L^2([0,T];W^{1,2}(D_k))$ and $u_\vep(T)\to g(x)=u(T)$ almost everywhere (hence $\Psi(u_\vep(T)-u(T))\to 0$ in $L^1$ by dominated convergence). Still using Lebesgue's dominated convergence theorem, we have that $|\psi(u_\eps-u)| (1+2 |Du|^2)\to 0$ in $L^1((0,T)\times D_{k+1})$.
Finally, using that $a_\vep(x)$ converges almost everywhere, we can deduce that $\psi'(u_\vep-u) a_\vep(x)Du$ converges strongly in $L^2((0,T)\times D_{k+1})$, and since $Du_\vep$ converges weakly to $Du$, the first integral in the right-hand side of \rife{espk} converges to zero as well. Therefore, \rife{espk} implies that
$$
\intifk{k+1}  |Du_\eps- Du|^2\, \xi_k^2\spazio dxdt  \mathop{\to}^{\vep\to 0} 0
$$
which implies the strong convergence of $u_\vep$ to $u$ in $L^2([0,T];W^{1,2}(D_k))$ for all $D_k$. 

We finish by noticing that, in case $u_\vep(T)$ is not assume to converge strongly, then the above argument needs to be localized, which means using the test function $\psi(u_\eps-u)  \xi_k^2\, (T-t)$. With the same arguments as above, in that case one concludes the strong convergence in $L^2([0,t];W^{1,2}(D_k))$ for all $t<T$, but not up to $t=T$.
\qed

\vskip2em
We now provide the proof of the analog compactness result for the Fokker-Planck equation. 

{\bf Proof of Lemma \ref{compact-fp}.} \quad We only sketch the proof since this is just  a local version of compactness results which are well-known in the case of   boundary value problems.  

We first obtain local estimates as in \cite[Lemma 3.3]{Pumi}:  namely, for any $q<\frac{N+2}{N+1}$,
\be\label{stime_loc}
\intifk{k}|Dm_\eps|^q\spazio dxdt\le C_{q,k}
\ee 
for every $D_k$ defined in \rife{qn}. This estimate in particular implies that $m_\vep$ is bounded in $L^q(0,T; W^{1,q}(K))$ for $q<\frac{N+2}{N+1}$, for any compact subset $K$.
From the equation and the local boundedness of $a_\vep$, $b_\vep$, we deduce that $(m_\eps)_t$ is bounded in $L^q(0,T; W^{-1,q}(K))$, so applying standard compactness results (see \cite[Corollary 4]{Simon}) we get that $m_\vep$ is relatively compact in $L^1(0,T; L^1(K))$. Through a diagonal procedure, we can extract a  subsequence, which is not relabeled, such that $m_\vep $ converges almost everywhere in $Q_T$, and in $L^1(0,T; L^1(K))$ for every compact subset $K\subset \Omega$, towards some function $m$ which actually belongs to $L^\infty((0,T);L^1(\Omega))$ because of Fatou's lemma and the fact that $\|m_\vep(t)\|_{\elle1}$ is uniformly bounded.

In order to obtain a strong convergence in $C^0([0,T];L^1_{loc}(\Omega))$, we use some renormalization argument similar as in \cite[Theorem 6.1]{P-Arma}. To this purpose, we use the auxiliary function
\be\label{sn}
S_n(r)=n\,S\left(\frac rn\right)\,,\,\hbox{where $S(r)=\int_0^r S'(r)dr$,} \quad S'(r)=\begin{cases} 1 & \hbox{if $|s|\leq 1$}\\
2-|s| & \hbox{if $1<|s|\leq 2$}
\\
0 & \hbox{if $|s|>2$} \end{cases} 
\ee
so that 
$S_n$ is a sequence of bounded functions which  converges to the identity locally uniformly as $n\to \infty$. 
Let $\xi_k$ be the cut-off function  defined   in \eqref{cutoff}. By choosing $(1-S_n'(m_\vep)) \xi_k^2$ as test function in \rife{meps} one obtains
\begin{align*} &
\into (m_\vep-S_n(m_\vep))(t)\xi_k^2\, dx  +\lambda_{k+1}\, \frac1n\int_0^t \int_{D_{k}} | Dm_\vep|^2\, \mathds 1_{\{n<m_\vep<2n\}} dxds
\\
& \qquad \leq  C_k \left\{\int_0^t \int_{D_{k+1}} m_\vep\, \mathds 1_{\{n<m_\vep \}} dxds + \int_0^t \int_{D_{k+1}} |D m_\vep  | \mathds 1_{\{n<m_\vep \}} dxds \right\}
\\
& \qquad \qquad + \into (m_\vep(0)-S_n(m_\vep(0)))\,\xi_k^2 \, dx\,,
\end{align*}
where we used the local ellipticity of $a_\vep$, and the local boundedness of $a_\vep$ and $b_\vep$.
Since $0\leq r-S_n(r)\leq r\, \mathds 1_{\{r>n\}}$, and since $m_\vep(0)$ converges in $L^1(K)$ for all compact subsets $K$, last term is  small as $n\to \infty$, uniformly with respect to $\vep$. The same holds for the previous terms in the right-hand side due to the local bounds on $m_\vep$, $Dm_\vep$. Hence it holds
\be\label{fett}
\lim\limits_{n\to \infty} \,\, \sup_\vep \,\,\,\frac1n\int_0^T \into | Dm_\vep|^2\, \mathds 1_{\{n<m_\vep<2n\}} \, \xi^2\, dxdt = 0
\ee
and
\be\label{fett2}
\lim\limits_{n\to \infty} \,\, \sup_{\{\vep>0, t\in [0,T]\}}\,\,\, \into (m_\vep-S_n(m_\vep))(t)\xi^2\, dx  =0
\ee
for any cut-off function $\xi$.

Now one can renormalize the equation for $m_\vep$. Indeed, thanks to  \rife{fett} the function $S_n(m_\vep)$ satisfies
\be\label{ren}
(S_n(m_\eps))_t-\mathrm{div}(a_\vep^* (x)DS_n(m_\eps)+ b_\vep\, m_\vep\, S_n'(m_\vep))=R_{\vep,n}\quad\mbox{in }(0,T)\times\Omega_\eps\,
\ee
where $R_{\vep, n}$ is such that 
\be\label{reps}
\lim\limits_{n\to \infty} \,\, \sup_\vep\,\,\, \int_0^T \into |R_{\vep,n}|\, \xi^2\, dxdt =0\,.
\ee
Consider now a sequence  $\{m^n_j\}$ in $L^2(0,T; W^{1,2}_{loc}(\Omega))$ approximating the function $S_n(m)$ with the following properties:
$$
\begin{cases}
\partial_t m^n_j  = -j (m^n_j- S_n(m))\,,\quad \qquad \|m^n_j\|_\infty\leq n 
\\
\m
m^n_j\mathop{\to}\limits^{j\to \infty} S_n(m)\quad \hbox{in $L^2(0,T; W^{1,2}(K)) $,} \qquad 
m^n_j(0) \mathop{\to}\limits^{j\to \infty} S_n(m_0) \quad \hbox{in $L^1(K)$,}
& 
\end{cases}
$$
for any  compact  $K\subset \Omega$. We take $T_1(S_n(m_\vep)-m^n_j)\, \xi^2$ as test function in \rife{ren} and we 
obtain (we denote $\Theta_1(r)=\int_0^r T_1(s)ds$):
\begin{align*} &
\into \Theta_1\left(S_n(m_\vep)-m^n_j \right)(t)\, \xi^2dx + C \int_0^t \into |DT_1(S_n(m_\vep)-m^n_j)|^2\, \xi^2 \\
& \leq  \into \Theta_1\left(S_n(m_\vep(0))-m^n_j (0)\right)\, \xi^2dx
 - \int_0^t \into b_\vep \, m_\vep \, S_n'(m_\vep)D T_1(S_n(m_\vep)-m^n_j)\, \xi^2
\\
& \quad - \int_0^t \into a_\vep^*(x) \, Dm_j^n\, DT_1(S_n(m_\vep)-m^n_j)\, \xi^2 \, dxds-
\int_0^t \into (m^n_j)_t T_1(S_n(m_\vep)-m^n_j) \, \xi^2dxds 
\\
& \quad 2 \int_0^t \into [a_\vep^*(x)\, DS_n(m_\vep)+ b_\vep\, m_\vep \, S_n'(m_\vep)] D\xi\, T_1(S_n(m_\vep)-m^n_j)\,\xi\, dxds
\\
& \quad + \sup_\vep\,\,\, \int_0^T \into |R_{\vep,n}|\, \xi^2\, dxdt\,.
\end{align*}
We will let first $\vep\to 0$ and then $j\to \infty$. Since all the integrals are localized in a compact subset of $\Omega$, we use the almost everywhere convergence of $m_\vep$ (hence $S_n(m_\vep)$ converges  in $L^1$) and the weak convergence of $DS_n(m_\vep)$ in $L^2$, as well as the convergences of $a_\vep(x)$ and $b_\vep(t,x)$. Then we find, as $\vep\to 0$:
\begin{align*} &
\limsup_{\vep \to 0}\into \Theta_1\left(S_n(m_\vep)-m^n_j \right)(t)\, \xi^2dx 
 \\ 
 & \leq  \into \Theta_1\left(S_n(m_0)-m^n_j (0)\right)\, \xi^2dx
 - \int_0^t \into b  \, m  \, S_n'(m) D T_1(S_n(m)-m^n_j)\, \xi^2
\\
& \quad - \int_0^t \into a^*(x) \, Dm_j^n\, DT_1(S_n(m)-m^n_j)\, \xi^2 \, dxds-
\int_0^t \into (m^n_j)_t T_1(S_n(m)-m^n_j) \, \xi^2dxds 
\\
& \quad 2 \int_0^t \into [a^*(x)\, DS_n(m)+ b\, m  \, S_n'(m)] D\xi\, T_1(S_n(m)-m^n_j)\,\xi\, dxds
\\
& \quad + \sup_\vep\,\,\, \int_0^T \into |R_{\vep,n}|\, \xi^2\, dxdt\,.
\end{align*}
Thanks to the properties of $m^n_j$ we have $(m^n_j)_t T_1(S_n(m)-m^n_j) \geq 0$. Hence that term can be dropped. For all other terms, we let $j\to \infty$;  using that $m^n_j$ converges to $S_n(m)$ and $m_j^n(0)$ to $S_n(m_0)$ we conclude 
that 
\be\label{jeps}
\limsup_{j\to \infty}\,\, \limsup_{\vep \to 0} \into \Theta_1\left(S_n(m_\vep)-m^n_j \right)(t)\, \xi^2dx  \leq  \omega(n)
\,, \qquad \omega(n):=\sup_\vep\,\,\, \int_0^T \into |R_{\vep,n}|\, \xi^2\, dxdt\,.
\ee
Last term will vanish as $n\to \infty$ due to \rife{reps}. 
Now we estimate
$$
\into |S_n(m_\vep)-S_n(m)|(t)\, \xi^2 \, dx   \leq \into |S_n(m_\vep)-m^n_j|(t)\, \xi^2dx + 
\into |S_n(m)-m^n_j|(t)\, \xi^2dx\,.
$$
Using that $|s| \leq C\, \max(\Theta_1(s), \sqrt{\Theta_1(s)})$, due to \rife{jeeps} and to the convergence of $m_j^n$ towards $S_n(m)$, we get after letting $\vep\to 0$ and $j\to \infty$:
$$
\limsup_{\vep \to 0} \, \into |S_n(m_\vep)-S_n(m)|(t)\, \xi^2 \, dx \leq C \omega(n) \mathop{\to}^{n\to \infty}0\,, 
$$
and the above holds uniformly with respect to $t\in [0,T]$.
Putting together this estimate with \rife{fett2}, we conclude that 
$$
\lim_{\vep \to 0}\,\,\, \sup\limits_{[0,T]}\into | m_\vep(t)-m |(t)\, \xi^2 \, dx =0\,.
$$
Hence $m\in C^0([0,T]; L^1_{loc}(\Omega))$ and the uniform convergence holds.
\qed

\end{document}